\ttfamily\color{gray},
\ttfamily\color{gray}\footnotesize,
\newtheorem{theorem}{Theorem}[]
\newtheorem{lemma}{Lemma}
\newtheorem{remark}{Remark}
\newtheorem{corollary}{Corollary}
\theoremstyle{definition}
\DeclarePairedDelimiter\floor{\lfloor}{\rfloor}
\def\XXint#1#2#3{{\setbox0=\hbox{$#1{#2#3}{\int}$ }
		\vcenter{\hbox{$#2#3$ }}\kern-.6\wd0}}
\title{\bf{On explicit order 1.5  approximations with varying coefficients: the case of super-linear diffusion coefficients}}
\author{Sotirios Sabanis and Ying Zhang \\
\em{School of Mathematics,} \\  \em{The University of
Edinburgh, Edinburgh EH9 3FD, U.K.}}
\begin{document}

\maketitle

\begin{abstract}

\bigskip
A conjecture appears in [Kumar and Sabanis (2016). \url{arXiv:1601.02695[math.PR]}], in the form of a remark, where it is stated that it is possible to construct, in a specified way, any high order explicit numerical schemes to approximate the solutions of SDEs with superlinear coefficients. We answer this conjecture to the positive for the case of order 1.5 approximations and show that the suggested methodology works. Moreover, we explore the case of having H\"{o}lder continuous derivatives for the diffusion coefficients.

\noindent {\it AMS subject classifications}: Primary 60H35; secondary 65C30.
\end{abstract}

\section{Introduction}
Due to recent research (see \cite{hutzenthaler2012}, \cite{hutzenthaler2015}, \cite{eulerscheme}, \cite{SabanisAoAP}, \cite{WangGan} and references therein), new explicit Euler-type schemes have been developed to approximate SDEs with superlinearly growing coefficients following the observation in \cite{hutzenthaler2011} that the classical (explicit) Euler scheme cannot be used for such approximations. This has been extended to Milstein-type schemes (see \cite{milsteinscheme}, \cite{Kruse et al.} and references therein). Such schemes are explicit and therefore more computationally efficient compared to the implicit methods. 

In this article, a new type of explicit order 1.5 scheme is constructed. The techniques used in \cite{SabanisAoAP} and \cite{milsteinscheme} are further extended to obtain the \(\mathcal{L}^2\) rate of convergence of the proposed order 1.5 scheme. The main idea is to follow the approach of \cite{Platen-Wagner} by using an appropriate Ito-Taylor (known also as Wagner-Platen) expansion and the taming technique introduced in \cite{SabanisAoAP} and  \cite{milsteinscheme}. Theorem \ref{theorem1} below gives the rate of convergence in \(\mathcal{L}^2\) which is obtained under certain conditions (also given below). In addition, by the combination of the results in \cite{SabanisAoAP}, \cite{milsteinscheme} and in this article, one can arguably anticipate that, by using the uniform taming approach as explained below, any high order (explicit) scheme can be constructed with the desired rate of convergence as in the global Lipschitz case (see \cite{Platen-Wagner}).  

Recent developments in data science attracted our attention to the fact that high order schemes can be used for MCMC algorithms with improved convergence properties in high dimensions. Moreover, such schemes can be combined with multilevel techniques in a natural way One can refer to the article \cite{TULA} on tamed unadjusted Langevin algorithms and consider possible extensions of such techniques to achieve higher accuracy when using tamed schemes to sample from a target distribution, typically the invariant measure of the associated Langevin SDE.

This section is concluded by introducing some notation. The Euclidean norm of a vector \(b \in \mathbb{R}^d\) and the Hilbert-Schmidt norm of a matrix \(\sigma \in \mathbb{R}^{d\times m}\) are denoted by $|b|$ and $|\sigma|$ respectively. $\sigma^{\ast}$ is the transpose matrix of $\sigma$.  The $i$-th element of $b$ and \((i,j)\)-th element of $\sigma$ are denoted respectively by $b^{(i)}$ and \(\sigma^{(i,j)}\), for every \(i = 1, \dots,d\) and \(j = 1,\dots,m\). In addition, \(\floor{a}\) denotes the integer part of a positive real number $a$. The inner product of two vectors \(x,y \in \mathbb{R}^d\) is denoted by $xy$. Let $f:\mathbb{R}^{d} \rightarrow \mathbb{R}$ be a twice continuously differentiable function. Denote by $\nabla f$ and $\nabla^2 f$ the gradient and the Hessian of $f$ respectively. For every \(j = 1, \dots,m\),  define $L^0: C^2(\mathbb{R}^d) \rightarrow C(\mathbb{R}^d)$ and $L^j: C^2(\mathbb{R}^d) \rightarrow C^1(\mathbb{R}^d)$ by
\[
L^0 = \sum_{u=1}^d b^{(u)}\frac{\partial}{\partial x^{(u)}} +\frac{1}{2}\sum_{u,l=1}^d\sum_{j_1 = 1}^m \sigma^{(u,j_1)}\sigma^{(l,j_1)}\frac{\partial^2}{\partial x^{(u)}\partial x^{(l)}}, \quad L^j = \sum_{u=1}^d \sigma^{(u,j)}\frac{\partial}{\partial x^{(u)}}.
\]
Note that, for any $j = 1, \dots, m$, by composing the operator $L^j$ with itself, one obtains $L^jL^{j_1}: C^2(\mathbb{R}^d) \rightarrow C(\mathbb{R}^d)$ for every \(j, j_1 = 1, \dots,m\), which can be written as
\[
L^jL^{j_1} = \sum_{u,l=1}^d \sigma^{(u,j)}\frac{\partial}{\partial x^{(u)}}\sigma^{(l,j_1)}\frac{\partial}{\partial x^{(l)}} +\sum_{u,l=1}^d \sigma^{(u,j)}\sigma^{(l,j_1)}\frac{\partial^2}{\partial x^{(u)}\partial x^{(l)}}.
\]

\section{Main results}
Let \((\Omega, \{\mathscr{F}_t\}_{t \geq 0}, \mathscr{F},\mathbb{P})\) be a complete filtered probability space satisfying the usual conditions, which means that the filtration is right continuous and \(\mathscr{F}_0\) contains all $\mathbb{P}$-null sets. Denote by \((w_t)_{t\in [0,T]}\) an $m$-dimensional Wiener process. Moreover, assume that $b$ and $\sigma$ are Borel-measurable functions from $\mathbb{R}^d$ to $\mathbb{R}^d$ and $\mathbb{R}^{d\times m}$, respectively. The drift and diffusion coefficients $b$ and $\sigma$ are assumed to be twice continuously differentiable in \(x \in \mathbb{R}^d\). For a fixed \(T>0\), consider a $d$-dimensional SDE,
\begin{equation}\label{sde}
x_t = x_0 + \int_0^t b(x_s)\, ds+\int_0^t \sigma (x_s)\, dw_s,
\end{equation}
almost surely for any \(t \in [0,T]\), where $x_0$ is an $\mathcal{F}_0$-measurable random variable. Let \(p_0 \geq 4\), $p_1>2$, and \(\rho \geq 2\). The following assumptions are stated.
\begin{enumerate}[label=\textbf{A-\arabic*}]
\item \label{a1} \(\mathbb{E}|x_0|^{p_0}<\infty.\)
\item \label{a2}  There exists a constant \(K>0\), such that for any \(x \in \mathbb{R}^d\), 
\[
2xb(x) +(p_0 -1)|\sigma(x)|^2 \leq K(1+|x|^2).
\]
\item \label{a3} There exists a constant \(K>0\), such that for any \(x, \bar{x} \in \mathbb{R}^d\),
\[
2(x-\bar{x})(b(x)-b(\bar{x})) +(p_1 -1)|\sigma(x)-\sigma (\bar{x})|^2 \leq K|x-\bar{x}|^2.
\]
\item \label{a4} There exists a constant \(K>0\), such that for any \(x,\bar{x} \in \mathbb{R}^d\), and \(i = 1, \dots,d\),
\[
|\nabla^2b^{(i)}(x)-\nabla^2b^{(i)}(\bar{x})| \leq K(1+|x|+|\bar{x}|)^{\rho -2}|x-\bar{x}|.
\]
\item \label{a5} There exist constants \(K>0\) and $\beta \in (0,1]$, such that for any \(x,\bar{x} \in \mathbb{R}^d\), \(i = 1, \dots,d\), and \(j = 1,\dots, m,\)
\[
|\nabla^2\sigma^{(i,j)}(x)-\nabla^2\sigma^{(i,j)}(\bar{x})| \leq K(1+|x|+|\bar{x}|)^{\frac{\rho-4}{2}}|x-\bar{x}|^{\beta}.
\]
\end{enumerate}
\begin{remark}\label{remark1} Assume \ref{a4} and \ref{a5} hold. Then, one can obtain the following estimates in a straightforward manner. In particular, by \ref{a4}, there exists a constant K>0, such that for any \(i, u, l = 1, \dots,d,\) and \( x, \bar{x} \in \mathbb{R}^d,\) 
\[
\left|\frac{\partial^2b^{(i)}(x)}{\partial y^{(u)} \partial y^{(l)}}\right| \leq K(1+|x|)^{\rho -1}.
\]
In addition,
\[
\left|\frac{\partial b^{(i)}(x)}{\partial y^{(u)}}-\frac{\partial b^{(i)}(\bar{x})}{\partial y^{(u)} }\right|\leq K(1+|x|+|\bar{x}|)^{\rho -1}|x-\bar{x}|.
\]
Furthermore, there is a constant \(K>0\) such that for any \(i, u= 1, \dots,d,\) and \( x, \bar{x} \in \mathbb{R}^d,\) 
\[
\left|\frac{\partial b^{(i)}(x)}{\partial y^{(u)}}\right| \leq K(1+|x|)^{\rho},
\]
\[
|b(x)-b(\bar{x})| \leq K(1+|x|+|\bar{x}|)^{\rho}|x-\bar{x}|,
\]
which implies
\[
|b(x)|\leq K(1+|x|)^{\rho+1}.
\]
Similarly, by \ref{a5}, there exists $K>0$, such that for any \(i, u, l = 1, \dots,d,\) \(j = 1, \dots, m\) and \( x \in \mathbb{R}^d,\) 
\[
\left|\frac{\partial^2\sigma^{(i,j)}(x)}{\partial y^{(u)} \partial y^{(l)}}\right| \leq K(1+|x|)^{\frac{\rho-2}{2}},
\]
Moreover, there exists $K>0$, such that for any \(j = 1, \dots, m\) and \(x, \bar{x} \in \mathbb{R}^d\),
\[
\left|\frac{\partial \sigma^{(i,j)}(x)}{\partial y^{(u)}}-\frac{\partial \sigma^{(i,j)}(\bar{x})}{\partial y^{(u)}}\right|\leq K(1+|x|+|\bar{x}|)^{\frac{\rho-2}{2}}|x-\bar{x}|.
\]
Furthermore, there exists K>0, such that for any \(i, u= 1, \dots,d,\) \(j = 1, \dots, m\) and \( x, \bar{x} \in \mathbb{R}^d,\) 
\[
\left|\frac{\partial \sigma^{(i,j)}(x)}{\partial y^{(u)}}\right| \leq K(1+|x|)^{\frac{\rho}{2}},
\]
\[
|\sigma(x)-\sigma(\bar{x})| \leq K(1+|x|+|\bar{x}|)^{\frac{\rho}{2}}|x-\bar{x}|,
\]
which implies
\[
|\sigma(x)|\leq K(1+|x|)^{\frac{\rho }{2}+1}.
\]
Then, there exists a constant \(K>0\), such that
\[
|L^0b(x)|\leq K(1+|x|)^{2\rho+1}, \quad |L^jb(x)|\leq K(1+|x|)^{\frac{3}{2}\rho+1},
\]
\[
|L^0\sigma(x)|\leq K(1+|x|)^{\frac{3}{2}\rho+1}, \quad |L^j\sigma(x)|\leq K(1+|x|)^{\rho+1},
\]
\[
|L^jL^{j_1}\sigma(x)|\leq K(1+|x|)^{\frac{3}{2}\rho+1}.
\]
\end{remark}
We adopt a uniform taming approach meaning that all terms of interest in the numerical scheme, which are used to approximate the SDE \eqref{sde}, are controlled in the same way, i.e. $\frac{1}{1+n^{-\theta}|x|^{2\rho\theta}}$ is used where $\theta$ represents the desired rate. More concretely, in the order 1.5 paradigm, one constructs, for any \(n \in \mathbb{N}\) and $f \in C^2(\mathbb{R}^d)$,
\[
f^n(x) = \frac{f(x)}{1+n^{-\theta}|x|^{2\rho \theta}}, \quad L^{n,0}f(x):=\frac{L^0f(x)}{1+n^{-\theta}|x|^{2\rho \theta}},
\]
\[
 L^{n,j}f(x):=\frac{L^jf(x)}{1+n^{-\theta}|x|^{2\rho \theta}}, \quad L^{n,j}L^{j_1}f(x):=\frac{L^jL^{j_1} f(x)}{1+n^{-\theta}|x|^{2\rho \theta}},
\]
where \(\theta\) is taken to be $3/2$.
\begin{remark} Throughout this article, the constant $C>0$ may take different values at different places, but it is always independent of \(n \in \mathbb{N}\).
\end{remark}
\begin{remark} \label{remark2} Due to Remark \ref{remark1}, one observes that, there exists a constant $C>0$, such that for any $n \in \mathbb{N}$
\[
|b^n(x) |\leq \min{(C n^{\frac{1}{2}}(1+|x|), |b(x)|}), \quad |\sigma^n(x)  |^2\leq \min{(C n^{\frac{1}{2}}(1+|x|^2), |\sigma(x)|^2}),
\]
\[
|L^{n,0}b(x) |\leq \min{(C n(1+|x|), |L^0b(x)|}), \quad |L^{n,j}b(x) |\leq \min{(C n^{\frac{3}{4}}(1+|x|), |L^jb(x)|}),
\]
\[
|L^{n,0}\sigma(x)|\leq \min{(C n^{\frac{3}{4}}(1+|x|), |L^0\sigma(x)|}), \quad 
|L^{n,j}\sigma(x) |\leq \min{(C n^{\frac{1}{2}}(1+|x|), |L^j\sigma(x)|}),
\]
\[
|L^{n,j}L^{j_1}\sigma(x)|\leq \min{(C n^{\frac{3}{4}}(1+|x|), |L^jL^{j_1} \sigma(x)|}).
\]
\end{remark}
Define \(\kappa(n,t):= \floor{nt}/n\), for any \(t \in [0,T]\). Denote by
\[
b^n_1(t,x) = \int_{\kappa(n,t)}^t L^{n,0}b(x) \, ds, \quad 
b^n_2(t,x)= \sum_j \int_{\kappa(n,t)}^t L^{n,j}b(x) \, dw_s^j,
\]
\[
\tilde{b}^n(t,x) = b^n(x)+b^n_1(t,x) +b^n_2(t,x),
\]
\[
\sigma^n_1(t,x) = \sum_j\int_{\kappa(n,t)}^t L^{n,j}\sigma(x)\, dw_s^j, \quad \sigma^n_2(t,x)  =  \int_{\kappa(n,t)}^t L^{n,0}\sigma(x)\, ds,
\]
\[
\sigma^n_3(t,x) = \sum_j\sum_{j_1}\int_{\kappa(n,t)}^t \int_{\kappa(n,t)}^s L^{n,j}L^{j_1}\sigma(x)\, dw_r^{j}\,dw_s^{j_1},
\]
\[
\tilde{\sigma}^n(t,x) =\sigma^n(x)+ \sigma_M^n(t,x),
\]
where \(\sigma_M^n(t,x) = \sigma^n_1(t,x) +\sigma^n_2(t,x) +\sigma^n_3(t,x)\). The order 1.5 strong Taylor scheme is as follows:
\begin{equation}\label{scheme}
x_t^n = x_0 + \int_0^t \tilde{b}^n(s,x_{\kappa (n,s)}^n) \, ds +\int_0^t \tilde{\sigma}^n(s,x_{\kappa (n,s)}^n)\, dw_s,
\end{equation}
almost surely for any \(t \in [0,T]\).
\begin{theorem}\label{theorem1}
Assume \ref{a1} - \ref{a5} are satisfied with \(p_0 \geq 2(5\rho +1)\), then the explicit order 1.5 scheme \eqref{scheme} converges to the true solution of the SDE \eqref{sde} in \(\mathcal{L}^2\) with a rate of convergence equal to $1+\beta/2$, i.e., there exists a constant $C>0$, such that for any \(n \in \mathbb{N}\),
\begin{equation}\label{l2rate}
\left(\sup_{0\leq t \leq T}\mathbb{E}|x_t - x_t^n|^2\right)^{1/2}\leq Cn^{-(1+\beta/2)}.
\end{equation}
\end{theorem}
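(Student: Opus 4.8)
The plan is to combine three ingredients: uniform-in-\(n\) moment bounds for the scheme \eqref{scheme}, sharp one-step ``defect'' estimates measuring how well \(\tilde{b}^n\) and \(\tilde{\sigma}^n\) reproduce the It\^o--Taylor (Wagner--Platen) expansion of \(b\) and \(\sigma\), and an \(\mathcal{L}^2\) energy argument built on the monotonicity assumption \ref{a3}. I would first record that \ref{a1}--\ref{a2} give the classical bound \(\sup_{t\le T}\mathbb{E}|x_t|^{p_0}<\infty\) for the SDE, and then prove \(\sup_{n}\sup_{0\le t\le T}\mathbb{E}|x^n_t|^{p}<\infty\) for \(p\le p_0\) together with the increment bound \(\sup_{0\le t\le T}\mathbb{E}|x^n_t-x^n_{\kappa(n,t)}|^{p}\le Cn^{-p/2}\). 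For this one applies It\^o's formula to \((1+|x^n_t|^2)^{p/2}\): by Remark \ref{remark2} every coefficient entering \eqref{scheme} grows at most linearly in space, with \(n\)-dependent constants \(n^{1/2},n^{3/4},n\), but these always multiply a time increment of length \(\le n^{-1}\) (or its square root), so that combined with \ref{a2} one closes a Gr\"onwall inequality. The threshold \(p_0\ge 2(5\rho+1)\) is exactly what makes all of \(b,L^0b,L^jb,\sigma,L^0\sigma,L^j\sigma,L^jL^{j_1}\sigma\) (polynomial growth orders as listed in Remark \ref{remark1}) square-integrable along \(x^n\), and makes the slowest-decaying taming error --- that of \(L^0b\), of order \(n^{-3/2}(1+|\cdot|)^{5\rho+1}\) --- of order \(n^{-3/2}\) in \(\mathcal{L}^2\).

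\textbf{Defect estimates.} Set \(\mathcal{D}^b_s:=b(x^n_s)-\tilde{b}^n(s,x^n_{\kappa(n,s)})\) and \(\mathcal{D}^\sigma_s:=\sigma(x^n_s)-\tilde{\sigma}^n(s,x^n_{\kappa(n,s)})\). On a single grid interval \([\kappa(n,s),s]\), on which \(x^n\) solves an explicit frozen-coefficient SDE, apply It\^o's formula to \(b(x^n_\cdot)\) and \(\sigma(x^n_\cdot)\) and match term by term with \(b^n_1+b^n_2\), resp. \(\sigma^n_1+\sigma^n_2+\sigma^n_3\). This yields \emph{(a)} \(\|\mathcal{D}^b_s\|_{\mathcal{L}^2}\le Cn^{-1}\) and \(\|\mathbb{E}[\mathcal{D}^b_s\mid\mathscr{F}_{\kappa(n,s)}]\|_{\mathcal{L}^2}\le Cn^{-3/2}\), together with a splitting \(\mathcal{D}^b_s=\mathcal{N}_s+\mathcal{S}_s+\mathcal{T}^b_s\) into a mean-zero martingale increment \(\mathcal{N}_s\) on \([\kappa(n,s),s]\) of order \(n^{-1}\), a remainder \(\mathcal{S}_s\) of order \(n^{-1}\) whose \(\mathscr{F}_{\kappa(n,s)}\)-conditional mean is \(O(n^{-2})\), and the \(\mathscr{F}_{\kappa(n,s)}\)-measurable taming error \(\mathcal{T}^b_s=b(x^n_{\kappa(n,s)})-b^n(x^n_{\kappa(n,s)})=O(n^{-3/2})\); and \emph{(b)} \(\|\mathcal{D}^\sigma_s\|_{\mathcal{L}^2}\le Cn^{-(1+\beta/2)}\). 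Estimate (b) is the heart of the matter: after matching, the dominant surviving term is \(\sum_{j,j_1}\int_{\kappa(n,s)}^s\!\int_{\kappa(n,s)}^r\big[L^jL^{j_1}\sigma(x^n_r)-L^jL^{j_1}\sigma(x^n_{\kappa(n,s)})\big]\,dw^j\,dw^{j_1}\), and \ref{a5} together with the chain of bounds in Remark \ref{remark1} controls the bracket by a polynomial weight times \(|x^n_r-x^n_{\kappa(n,s)}|^{\beta}\); since \(\mathbb{E}|x^n_r-x^n_{\kappa(n,s)}|^{2\beta}\le Cn^{-\beta}\) by the increment bound and the iterated It\^o integral contributes a further \(n^{-1}\), the exponent \(1+\beta/2\) emerges. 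The taming errors feeding (a)--(b) are \(O(n^{-3/2})\le O(n^{-(1+\beta/2)})\), hence harmless.

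\textbf{Gr\"onwall step.} Put \(e_t:=x_t-x^n_t\) and apply It\^o's formula to \(|e_t|^2\). Writing \(b(x_s)-\tilde{b}^n(s,x^n_{\kappa(n,s)})=[b(x_s)-b(x^n_s)]+\mathcal{D}^b_s\) and likewise for \(\sigma\), assumption \ref{a3} bounds the diagonal combination \(2e_s(b(x_s)-b(x^n_s))+(p_1-1)|\sigma(x_s)-\sigma(x^n_s)|^2\) by \(K|e_s|^2\); since the It\^o correction term only needs the weight \(1\) in front of \(|\sigma(x_s)-\sigma(x^n_s)|^2\) rather than \(p_1-1\), a strictly positive multiple \((p_1-2)\mathbb{E}|\sigma(x_s)-\sigma(x^n_s)|^2\) survives with a sign in our favour --- this slack, available precisely because \(p_1>2\), is what absorbs every stray superlinear occurrence \(|\sigma(x_\ast)-\sigma(x^n_\ast)|^2\) produced in the sequel. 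It then remains to control \(\int_0^t\mathbb{E}[e_s\mathcal{D}^b_s]\,ds\) and \(\int_0^t\mathbb{E}|\mathcal{D}^\sigma_s|^2\,ds\); the latter is \(\le Cn^{-(2+\beta)}\) by (b). For the former, decompose \(e_s=e_{\kappa(n,s)}+\int_{\kappa(n,s)}^s(\cdots)\,dr+\int_{\kappa(n,s)}^s(\cdots)\,dw_r\): the \(\mathscr{F}_{\kappa(n,s)}\)-measurable part pairs with \(\mathbb{E}[\mathcal{D}^b_s\mid\mathscr{F}_{\kappa(n,s)}]=O(n^{-3/2})\) and is handled by Young's inequality, contributing \(\varepsilon\int_0^t\mathbb{E}|e_{\kappa(n,s)}|^2\,ds+Cn^{-3}\); the two increment parts carry a spare factor \((s-\kappa(n,s))^{1/2}\) or \(s-\kappa(n,s)\), and pairing them against \(\mathcal{D}^b_s=\mathcal{N}_s+\mathcal{S}_s+\mathcal{T}^b_s\) --- using that \(\mathcal{N}_s\) has zero \(\mathscr{F}_{\kappa(n,s)}\)-conditional mean, that the bracket of two stochastic integrals over \([\kappa(n,s),s]\) collapses to a time integral, and that \(\mathcal{S}_s,\mathcal{T}^b_s\) are small --- leaves only terms of the form \(\tfrac{c}{n}\,\mathbb{E}|\sigma(x_{\kappa(n,s)})-\sigma(x^n_{\kappa(n,s)})|^2\), which the leftover slack swallows thanks to the extra \(n^{-1}\), plus genuinely negligible \(O(n^{-3})\) remainders. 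A Gr\"onwall argument applied to \(t\mapsto\sup_{u\le t}\mathbb{E}|e_u|^2\) then delivers \eqref{l2rate}.

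\textbf{Main obstacle.} The principal difficulty is estimate (b): extracting the exponent \(1+\beta/2\) from the double It\^o integral in \(\mathcal{D}^\sigma\) using \emph{only} the H\"older-\(\beta\) bound \ref{a5} on \(\nabla^2\sigma\), and then checking --- via the bookkeeping sketched above --- that neither the uniform taming errors nor the drift defect \(\mathcal{D}^b\) (which, being \(O(n^{-1})\) in \(\mathcal{L}^2\), is \emph{not} individually of the right order) degrades the rate below \(1+\beta/2\). A subsidiary but delicate point is to make the Gr\"onwall absorption rigorous, i.e. to verify that every superlinear contribution \(|\sigma(x_\ast)-\sigma(x^n_\ast)|^2\) that appears is multiplied by a coefficient --- typically an extra power of \(n^{-1}\) coming from integration over \([\kappa(n,s),s]\) --- small enough to be absorbed by the \((p_1-2)\)-slack from \ref{a3}.
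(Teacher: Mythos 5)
Your proposal is correct and follows essentially the same route as the paper: uniform $p_0$-th moment bounds for the scheme, the one-step increment bound $\mathbb{E}|x^n_t-x^n_{\kappa(n,t)}|^p\le Cn^{-p/2}$, local truncation (defect) estimates of order $n^{-1}$ for the drift and $n^{-(1+\beta/2)}$ for the diffusion driven by \ref{a4}--\ref{a5} (your identification of the $\beta$-H\"older mechanism and of the moment threshold $p_0\ge 2(5\rho+1)$ as coming from the $L^0b$ taming error is exactly right), and a Gr\"onwall argument on $\mathbb{E}|e^n_t|^2$ using \ref{a3}. The two points where your bookkeeping genuinely differs from the paper's are both viable and arguably cleaner: (i) for the cross term $\mathbb{E}\int e^n_s\mathcal{D}^b_s\,ds$ you pair the $\mathscr{F}_{\kappa(n,s)}$-measurable part of $e^n_s$ with $\mathbb{E}[\mathcal{D}^b_s\mid\mathscr{F}_{\kappa(n,s)}]=O(n^{-3/2})$, which dispenses with the paper's localization by the stopping times $\nu_{n,R}$, the $CR^{-2/5}n^{-2}$ correction and the final Fatou passage $R\to\infty$ (you should still note that the martingale property and the a priori finiteness of $\sup_s\mathbb{E}|e^n_s|^2$ follow from the moment bounds, which is what the localization is insuring against); (ii) you absorb stray $|\sigma(x_\ast)-\sigma(x^n_\ast)|^2$ contributions with the $(p_1-2)$ slack from \ref{a3}, whereas the paper converts them into $|e^n_\ast|^2$ via the polynomial local Lipschitz bound of Remark \ref{remark1} and H\"older's inequality with weights $(1+|x|+|x^n|)^{\rho}$. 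The one place your sketch is thinner than the difficulty warrants is the uniform moment bound itself: the naive estimate $|b^n_2|\lesssim n^{1/4}(1+|x|)$ in $\mathcal{L}^{p_0}$ does not close a Gr\"onwall inequality by size alone, and one must exploit, as the paper does at length, that the terms $x^n_{\kappa(n,s)}b^n_2$ and $\sigma^n\sigma^n_1$, $\sigma^n\sigma^n_3$ have zero expectation by conditioning on $\mathscr{F}_{\kappa(n,s)}$ — the same conditional-mean-zero device you invoke later for the defect, so this is an omission of detail rather than of idea.
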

Theorem \ref{theorem1} states the convergence result for SDEs with superlinear coefficients. We can also recover the result (Theorem 10.6.3 in \cite{kloeden2011numerical}) for the global Lipschitz case. By the global Lipschitz case, we mean that all the terms in the numerical scheme \eqref{scheme} are assumed to be Lipschitz, i.e. Assumption (6.5) in Theorem 10.6.3 on page 361 from \cite{kloeden2011numerical} is used. Moreover, instead of (6.6) (on page 361 in \cite{kloeden2011numerical}), we assume \ref{a4} and \ref{a5} are satisfied with $\rho=0$, $\beta =1$. \\
Note that by using \ref{a4} and \ref{a5}, the coefficients $b$ and $\sigma$ are only required to be twice continuously differentiable, whereas (6.6) requires higher differentiability from $\sigma$. Note also that due to (6.5) (on page 361 in \cite{kloeden2011numerical}), \ref{a2} and \ref{a3} become redundant. Then, if one examines carefully the proof of Theorem \ref{theorem1}, the following result can be obtained for the global Lipschitz case:
\begin{corollary} \label{corollaryclassical} Let $p_0 > 4$. Assume \ref{a1} and (6.5) (on page 361 in \cite{kloeden2011numerical}), and also assume \ref{a4}, \ref{a5} hold with $\rho =0$, $\beta =1$. Then the explicit order 1.5 scheme converges to the true solution of the SDE in \(\mathcal{L}^2\) with a rate of convergence equal to $1.5$, i.e. there exists a constant $C>0$, such that for any \(n \in \mathbb{N}\)
\begin{equation*}
\left(\sup_{0\leq t \leq T}\mathbb{E}|x_t - x_t^n|^2\right)^{1/2} \leq Cn^{-3/2}.
\end{equation*}
\end{corollary}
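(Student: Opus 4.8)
The plan is to re-run the proof of Theorem \ref{theorem1} and track where the superlinearity exponent $\rho$ enters, observing that every place it appears it does so as a polynomial growth rate that collapses to a constant when $\rho = 0$. First I would note that under hypothesis (6.5) of \cite{kloeden2011numerical} all the maps $b$, $\sigma$, $L^0 b$, $L^j b$, $L^0\sigma$, $L^j\sigma$, $L^j L^{j_1}\sigma$ are globally Lipschitz and of linear growth, so the taming factor $1/(1+n^{-\theta}|x|^{2\rho\theta})$ with $\rho = 0$ is identically $\tfrac12$-independent-of-$x$; more precisely it equals $1/(1+n^{-3/2})$, a deterministic constant in $[1/2,1]$ that tends to $1$. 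Consequently $\tilde b^n$ and $\tilde\sigma^n$ differ from the \emph{untamed} order 1.5 coefficients only by this universal multiplicative constant $c_n := (1+n^{-3/2})^{-1} = 1 + O(n^{-3/2})$, and one recovers, up to an $O(n^{-3/2})$ perturbation in the coefficients themselves, the classical order 1.5 strong Taylor scheme of Theorem 10.6.3 in \cite{kloeden2011numerical}. This perturbation is harmless: it contributes an additional term of size $O(n^{-3/2})(1+|x^n_{\kappa(n,s)}|)$ inside the integrals, which after taking $\mathcal L^2$ norms and using the moment bound (see below) is $O(n^{-3/2})$, consistent with the claimed rate.

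Second, I would establish the uniform moment bound $\sup_n\sup_{0\le t\le T}\mathbb E|x^n_t|^{p_0} < \infty$ for some $p_0 > 4$. In the superlinear proof this is the delicate step requiring the taming and assumptions \ref{a2}, \ref{a3}; here, since all coefficients of the scheme are of linear growth with an $n$-independent constant, it follows from a routine Gr\"onwall argument after applying It\^o's formula to $|x^n_t|^{p_0}$, exactly as in the Lipschitz theory. Assumption \ref{a1} with $p_0 > 4$ supplies the initial moment. I would also record the companion estimate $\mathbb E|x^n_t - x^n_{\kappa(n,t)}|^{2q} \le C n^{-q}$ for $q \le p_0/2$, which is immediate from the definition of the scheme and the moment bound.

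Third, for the strong error I would write $e_t := x_t - x^n_t$ and, via It\^o's formula applied to $|e_t|^2$ together with the Burkholder--Davis--Gundy inequality, reduce matters to controlling, on each subinterval $[\kappa(n,s),s]$, the difference between the true It\^o--Taylor increment and the scheme's increment. With $\rho = 0$ and $\beta = 1$, assumptions \ref{a4} and \ref{a5} say precisely that $\nabla^2 b^{(i)}$ and $\nabla^2\sigma^{(i,j)}$ are globally Lipschitz; this is slightly weaker than hypothesis (6.6) of \cite{kloeden2011numerical} but is exactly what is needed to bound the Taylor remainder terms (the quadruple-integral It\^o--Taylor remainders) by $C n^{-3/2}$, since those remainders involve only the operators $L^0 b$, $L^j b$, $L^0\sigma$, $L^j\sigma$, $L^jL^{j_1}\sigma$ and one further application of a first-order operator — i.e. at most one additional derivative of the already-twice-differentiable $b,\sigma$. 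I would then close the estimate with Gr\"onwall's lemma to obtain $\sup_{0\le t\le T}\mathbb E|e_t|^2 \le C n^{-3}$, i.e. the $\mathcal L^2$ rate $n^{-3/2} = n^{-(1+\beta/2)}\big|_{\beta=1}$.

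The main obstacle — though it is more a matter of bookkeeping than of genuine difficulty — is verifying that the two places where the Theorem \ref{theorem1} proof genuinely \emph{uses} the full strength of the superlinear machinery (the uniform-in-$n$ moment bound, and the taming-induced $n^{\theta}$ truncation bounds of Remark \ref{remark2}) degenerate gracefully: in the Lipschitz regime the moment bound is standard, and the Remark \ref{remark2} bounds become the trivial statements $|b^n(x)| \le c_n|b(x)| \le C(1+|x|)$, etc., so no $n^{\theta}$ blow-up is ever incurred and every estimate that was previously "$O(n^{-3/2})$ after fighting the taming factor" is now simply "$O(n^{-3/2})$." Thus one does not need to re-prove the theorem from scratch; it suffices to point out that setting $\rho = 0$, $\beta = 1$ specializes each displayed inequality in the proof to its classical counterpart, and that the weaker differentiability in \ref{a4}--\ref{a5} is still sufficient for the It\^o--Taylor remainder estimates at order $1.5$. $\qed$
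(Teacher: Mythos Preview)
Your proposal is correct and follows essentially the same route the paper indicates: the corollary has no separate proof in the paper beyond the remark that ``if one examines carefully the proof of Theorem~\ref{theorem1}'' with $\rho=0$, $\beta=1$ and (6.5) in place of \ref{a2}--\ref{a3}, the result follows. Your observation that the taming factor collapses to the deterministic constant $(1+n^{-3/2})^{-1}$ and that the moment bound of Lemma~\ref{lemmalp0} becomes a routine Gr\"onwall argument under linear growth is exactly the simplification the paper is pointing to.
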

\section{Moment bounds}
\begin{lemma}\label{classicallp0} Assume \ref{a1} - \ref{a3} hold. Then, there is a unique solution to the SDE \eqref{sde}, and the \(p_0\)-th moment of the solution is bounded uniformly in time, i.e. there exists a constant $C>0$, such that for any \(t \in [0,T]\),
\[
\sup_{0 \leq t \leq T}\mathbb{E}|x_t|^{p_0} \leq C.
\]
\end{lemma}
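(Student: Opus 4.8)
The plan is to treat this as the classical Khasminskii-type non-explosion and moment estimate, and to identify at each step which of \ref{a1}--\ref{a3} is actually being used. Since $b$ and $\sigma$ are twice continuously differentiable, they are in particular locally Lipschitz on $\R^d$; the standard theory then yields a unique maximal local solution $(x_t)_{t\in[0,\tau_\infty)}$ of \eqref{sde}, where $\tau_\infty$ is the explosion time, with pathwise uniqueness on $[0,\tau_\infty)$. (Assumption \ref{a3} supplies, in addition, a global one-sided bound that will later drive the convergence analysis, but for well-posedness of the local solution the local Lipschitz property alone suffices.) It then remains to prove that $\tau_\infty=\infty$ almost surely and that the $p_0$-th moment is bounded on $[0,T]$; both follow from a single Lyapunov computation resting on \ref{a2}.

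Set $V(x):=(1+|x|^2)^{p_0/2}$, which is $C^2$ since $p_0\ge 4$. Then $\nabla V(x)=p_0(1+|x|^2)^{p_0/2-1}x$ and $\nabla^2 V(x)=p_0(1+|x|^2)^{p_0/2-1}I+p_0(p_0-2)(1+|x|^2)^{p_0/2-2}xx^{\ast}$. Using $0\le x^{\ast}\sigma(x)\sigma^{\ast}(x)x=|\sigma^{\ast}(x)x|^2\le|x|^2|\sigma(x)|^2\le(1+|x|^2)|\sigma(x)|^2$ together with $p_0-2\ge 0$, one obtains
\[
L^0V(x)=\nabla V(x)\,b(x)+\tfrac12\,\mathrm{tr}\!\bigl(\sigma(x)\sigma^{\ast}(x)\nabla^2 V(x)\bigr)\le \frac{p_0}{2}(1+|x|^2)^{p_0/2-1}\bigl(2xb(x)+(p_0-1)|\sigma(x)|^2\bigr),
\]
and now \ref{a2} gives $L^0V(x)\le \tfrac12 Kp_0\,(1+|x|^2)^{p_0/2}=\tfrac12 Kp_0\,V(x)$. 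The only structural subtlety is that the weight $p_0-1$ appearing in \ref{a2} is precisely the coefficient produced by the Itô correction term when one differentiates the $p_0$-th power, so \ref{a2} is exactly tailored to make $V$ a Lyapunov function for $L^0$.

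Next I would introduce the stopping times $\tau_R:=\inf\{t\ge 0:|x_t|\ge R\}$, apply Itô's formula to $t\mapsto V(x_{t\wedge\tau_R})$, and take expectations. After this localization the stochastic integral $\int_0^{t\wedge\tau_R}\nabla V(x_s)\,\sigma(x_s)\,dw_s$ is a genuine martingale (its integrand is bounded on $\{s\le\tau_R\}$), so it has zero mean, leaving
\[
\mathbb{E}\,V(x_{t\wedge\tau_R})\le \mathbb{E}\,V(x_0)+\frac{Kp_0}{2}\int_0^t \mathbb{E}\,V(x_{s\wedge\tau_R})\,ds .
\]
Gronwall's inequality then yields $\mathbb{E}\,V(x_{t\wedge\tau_R})\le \mathbb{E}\,V(x_0)\,e^{Kp_0T/2}$ for every $t\in[0,T]$ and every $R$, with a right-hand side that is finite by \ref{a1} and independent of $R$. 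The elementary bound $R^{p_0}\,\mathbb{P}(\tau_R\le t)\le \mathbb{E}\,V(x_{t\wedge\tau_R})$ then forces $\mathbb{P}(\tau_R\le t)\to 0$ as $R\to\infty$, hence $\mathbb{P}(\tau_\infty\le T)=0$ and the solution is global on $[0,T]$; after that, Fatou's lemma in $R$ gives $\sup_{0\le t\le T}\mathbb{E}|x_t|^{p_0}\le \sup_{0\le t\le T}\mathbb{E}\,V(x_t)\le \mathbb{E}(1+|x_0|^2)^{p_0/2}\,e^{Kp_0T/2}=:C$.

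I do not expect a genuine obstacle here: the lemma is, as its intended role suggests, essentially classical. The only point requiring any care is the localization bookkeeping — verifying that the local martingale term contributes nothing in expectation and that the passage to the limit $R\to\infty$ is legitimate — which is entirely routine.
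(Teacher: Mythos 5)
Your argument is correct: the Lyapunov computation with $V(x)=(1+|x|^2)^{p_0/2}$, the observation that the weight $p_0-1$ in \ref{a2} exactly absorbs the It\^o correction, the localization via $\tau_R$, Gronwall, and Fatou are all sound, and you are right that local Lipschitzness from $C^2$ regularity already yields pathwise uniqueness so that \ref{a3} is not needed here. The paper does not prove this lemma at all — it simply cites Mao (2007) — and what you have written out is precisely the classical argument that reference contains, so there is nothing to reconcile.
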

\begin{proof}
It is a well-known result, and the proof can be found in \cite{sderesult}.
\end{proof}
\begin{remark} By Remark \ref{remark2}, for each \(n \in \mathbb{N}\), the norm of \(\tilde{b}^n\) and \(\tilde{\sigma}^n\) are growing at most linearly in $x$. 
Then, together with \ref{a1}, this guarantees that for any \(n \in \mathbb{N}\) and \(p \leq p_0\),
\[
\mathbb{E}\left[\sup_{0 \leq t \leq T}|x_t^n|^p\right]< \infty.
\]
\end{remark}
\begin{lemma}\label{lemma2}
Let \ref{a4} - \ref{a5} be satisfied, then there exists a constant $C>0$, such that for any \(n \in \mathbb{N}\) and \(t \in [0,T]\),
\[
\mathbb{E}|b^n_1(t,x_{\kappa(n,t)}^n) |^{p_0} \leq C(1+\mathbb{E}|x_{\kappa(n,t)}^n|^{p_0}),
\]
\[
\mathbb{E}|b^n_2(t,x_{\kappa(n,t)}^n) |^{p_0} \leq Cn^{\frac{p_0}{4}}(1+\mathbb{E}|x_{\kappa(n,t)}^n|^{p_0}),
\]
\[
\mathbb{E}|\sigma^n_1(t,x_{\kappa(n,t)}^n) |^{p_0} \leq C(1+\mathbb{E}|x_{\kappa(n,t)}^n|^{p_0}),
\]
\[
\mathbb{E}|\sigma^n_2(t,x_{\kappa(n,t)}^n) |^{p_0} \leq C(1+\mathbb{E}|x_{\kappa(n,t)}^n|^{p_0}),
\]
\[
\mathbb{E}|\sigma^n_3(t,x_{\kappa(n,t)}^n) |^{p_0} \leq C(1+\mathbb{E}|x_{\kappa(n,t)}^n|^{p_0}).
\]
\end{lemma}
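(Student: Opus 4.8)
The plan is to estimate each of the five terms $b^n_1, b^n_2, \sigma^n_1, \sigma^n_2, \sigma^n_3$ separately, in each case combining the $\mathcal{L}^{p_0}$-boundedness of the iterated It\^o integral over the short interval $[\kappa(n,t),t]$ with the tamed sublinear bounds from Remark \ref{remark2}. The common mechanism is that $t-\kappa(n,t)\le 1/n$, so every time integral contributes a factor $n^{-1}$ and every single stochastic integral contributes, via the Burkholder--Davis--Gundy inequality, a factor $n^{-1/2}$; these factors cancel the powers of $n$ coming from the tamed coefficients, leaving a bound that is linear in $(1+\mathbb{E}|x^n_{\kappa(n,t)}|^{p_0})$.

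First I would handle $b^n_1(t,x)=\int_{\kappa(n,t)}^t L^{n,0}b(x)\,ds$. Since $x^n_{\kappa(n,t)}$ is $\mathscr{F}_{\kappa(n,t)}$-measurable and $L^{n,0}b$ is being evaluated at that frozen point, we simply have $|b^n_1(t,x^n_{\kappa(n,t)})|\le (t-\kappa(n,t))\,|L^{n,0}b(x^n_{\kappa(n,t)})|\le n^{-1}\cdot Cn(1+|x^n_{\kappa(n,t)}|)=C(1+|x^n_{\kappa(n,t)}|)$ by Remark \ref{remark2}; raising to the power $p_0$ and taking expectations gives the first estimate. For $\sigma^n_2$ the argument is identical with the bound $|L^{n,0}\sigma(x)|\le Cn^{3/4}(1+|x|)$, giving the factor $n^{-1}\cdot n^{3/4}=n^{-1/4}\le 1$, which is more than enough.

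For the genuinely stochastic terms $b^n_2$ and $\sigma^n_1$ I would condition on $\mathscr{F}_{\kappa(n,t)}$ and apply the Burkholder--Davis--Gundy inequality (or, more simply, the $\mathcal{L}^{p_0}$ bound for a stochastic integral with $\mathscr{F}_{\kappa(n,t)}$-measurable integrand): $\mathbb{E}\big[|\int_{\kappa(n,t)}^t L^{n,j}b(x^n_{\kappa(n,t)})\,dw^j_s|^{p_0}\,\big|\,\mathscr{F}_{\kappa(n,t)}\big]\le C(t-\kappa(n,t))^{p_0/2}|L^{n,j}b(x^n_{\kappa(n,t)})|^{p_0}\le Cn^{-p_0/2}\cdot n^{3p_0/4}(1+|x^n_{\kappa(n,t)}|^{p_0})=Cn^{p_0/4}(1+|x^n_{\kappa(n,t)}|^{p_0})$, using $|L^{n,j}b(x)|\le Cn^{3/4}(1+|x|)$. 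Taking the full expectation and summing over the finitely many $j$ yields the claimed $n^{p_0/4}$ rate for $b^n_2$. For $\sigma^n_1$ the tamed bound $|L^{n,j}\sigma(x)|\le Cn^{1/2}(1+|x|)$ gives $n^{-p_0/2}\cdot n^{p_0/2}=1$, hence the bound with no power of $n$.

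Finally, for the double integral $\sigma^n_3(t,x)=\sum_{j,j_1}\int_{\kappa(n,t)}^t\int_{\kappa(n,t)}^s L^{n,j}L^{j_1}\sigma(x)\,dw^j_r\,dw^{j_1}_s$, I would apply the $\mathcal{L}^{p_0}$ estimate twice: conditioning on $\mathscr{F}_{\kappa(n,t)}$, the inner integral over $[\kappa(n,t),s]$ contributes $(s-\kappa(n,t))^{p_0/2}$ and the outer integral a further $(t-\kappa(n,t))^{p_0/2}$, for a total factor $n^{-p_0}$, against $|L^{n,j}L^{j_1}\sigma(x)|^{p_0}\le Cn^{3p_0/4}(1+|x|^{p_0})$; thus $n^{-p_0}\cdot n^{3p_0/4}=n^{-p_0/4}\le1$, and summing over $j,j_1$ closes the estimate. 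The only point requiring a little care — and the main obstacle, such as it is — is the iterated application of BDG to the double stochastic integral while keeping track of which interval each of the two time factors comes from; once that bookkeeping is set up, every term reduces to multiplying the frozen-point polynomial bound of Remark \ref{remark2} by the appropriate negative power of $n$, and the fact that $\mathbb{E}|x^n_{\kappa(n,t)}|^{p_0}$ is finite (so $C$ genuinely works) is already recorded in the remark following Lemma \ref{classicallp0}.
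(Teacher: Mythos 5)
Your proposal is correct and is exactly the verification the paper leaves implicit: the paper's proof is the one-line remark that the estimates ``follow immediately'' from Remark \ref{remark2}, and your computation — frozen integrands over an interval of length at most $1/n$, each $ds$-integral contributing $n^{-1}$ and each stochastic integral contributing $n^{-1/2}$ via BDG/Gaussian moments, cancelled against the tamed powers $n^{1/2}$, $n^{3/4}$, $n$ from Remark \ref{remark2} — is precisely the intended argument, with the exponents ($n^{p_0/4}$ for $b^n_2$, constants elsewhere) all matching the statement.
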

\begin{proof} Due to Remark \ref{remark2}, these inequalities follow immediately.
\end{proof}
\begin{corollary}\label{corollary1}
Assume \ref{a4} - \ref{a5} are satisfied, then there exists a constant $C>0$, such that for any \(n \in \mathbb{N}\) and \(t \in [0,T]\),
\[
\mathbb{E}|\tilde{b}^n(t,x_{\kappa(n,t)}^n) |^{p_0} \leq Cn^{\frac{p_0}{2}}(1+\mathbb{E}|x_{\kappa(n,t)}^n|^{p_0}),
\]
\[
\mathbb{E}|\tilde{\sigma}^n(t,x_{\kappa(n,t)}^n) |^{p_0} \leq Cn^{\frac{p_0}{4}}(1+\mathbb{E}|x_{\kappa(n,t)}^n|^{p_0}).
\]
\end{corollary}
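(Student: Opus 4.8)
The plan is to read the estimate off from the pointwise tamed bounds of Remark \ref{remark2} together with the integral estimates of Lemma \ref{lemma2}, via the triangle inequality in $\mathcal{L}^{p_0}$; the only substantive point is to track which constituent term carries the largest power of $n$.

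For the drift, write $\tilde{b}^n(t,x) = b^n(x) + b^n_1(t,x) + b^n_2(t,x)$ and apply the convexity bound $|a_1+a_2+a_3|^{p_0}\le 3^{p_0-1}(|a_1|^{p_0}+|a_2|^{p_0}+|a_3|^{p_0})$ at $x = x^n_{\kappa(n,t)}$. The term $\mathbb{E}|b^n(x^n_{\kappa(n,t)})|^{p_0}$ is handled by raising the bound $|b^n(x)|\le Cn^{1/2}(1+|x|)$ of Remark \ref{remark2} to the power $p_0$ and using $(1+|x|)^{p_0}\le 2^{p_0-1}(1+|x|^{p_0})$, which gives $Cn^{p_0/2}(1+\mathbb{E}|x^n_{\kappa(n,t)}|^{p_0})$. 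Lemma \ref{lemma2} controls the remaining two terms by $C(1+\mathbb{E}|x^n_{\kappa(n,t)}|^{p_0})$ and $Cn^{p_0/4}(1+\mathbb{E}|x^n_{\kappa(n,t)}|^{p_0})$ respectively; since $n\ge 1$ we have $1\le n^{p_0/4}\le n^{p_0/2}$, so all three contributions are absorbed into $Cn^{p_0/2}(1+\mathbb{E}|x^n_{\kappa(n,t)}|^{p_0})$, which is the first assertion.

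The diffusion is treated the same way with $\tilde{\sigma}^n(t,x) = \sigma^n(x)+\sigma^n_1(t,x)+\sigma^n_2(t,x)+\sigma^n_3(t,x)$ and the four-term convexity bound. Here the $\sigma^n$-term is estimated by taking $|\sigma^n(x)|^2\le Cn^{1/2}(1+|x|^2)$ from Remark \ref{remark2} to the power $p_0/2$, which yields $|\sigma^n(x)|^{p_0}\le Cn^{p_0/4}(1+|x|^2)^{p_0/2}\le Cn^{p_0/4}(1+|x|^{p_0})$ and hence $\mathbb{E}|\sigma^n(x^n_{\kappa(n,t)})|^{p_0}\le Cn^{p_0/4}(1+\mathbb{E}|x^n_{\kappa(n,t)}|^{p_0})$; each of $\sigma^n_1,\sigma^n_2,\sigma^n_3$ is bounded by $C(1+\mathbb{E}|x^n_{\kappa(n,t)}|^{p_0})\le Cn^{p_0/4}(1+\mathbb{E}|x^n_{\kappa(n,t)}|^{p_0})$ by Lemma \ref{lemma2} and $n\ge 1$, and summing gives the second assertion.

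There is no genuine obstacle: the statement is a bookkeeping consequence of Remark \ref{remark2} and Lemma \ref{lemma2}. The one thing to get right is that the exponents $p_0/2$ and $p_0/4$ are forced by the \emph{base} tamed coefficients $b^n$ and $\sigma^n$ — whose supremum over $x$ scales like $n^{1/2}$ and whose squared supremum scales like $n^{1/2}$, respectively — rather than by the lower-scaling It\^o--Taylor correction terms $b^n_1,b^n_2,\sigma^n_1,\sigma^n_2,\sigma^n_3$, so one must check that those corrections are genuinely dominated, which is immediate after comparing exponents and using $n\ge 1$.
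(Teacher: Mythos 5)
Your argument is correct and is exactly the route the paper intends: the corollary is stated without proof precisely because it follows from the decomposition $\tilde{b}^n=b^n+b^n_1+b^n_2$, $\tilde{\sigma}^n=\sigma^n+\sigma^n_1+\sigma^n_2+\sigma^n_3$, the tamed pointwise bounds of Remark \ref{remark2} for the base coefficients, and Lemma \ref{lemma2} for the It\^o--Taylor corrections. Your bookkeeping of the exponents ($n^{p_0/2}$ forced by $b^n$, $n^{p_0/4}$ by $\sigma^n$, with all correction terms dominated) matches the paper's statement.
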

\begin{lemma}\label{lemmalp0}
Assume \ref{a1} - \ref{a5} hold, then there exists a constant $C>0$, such that for any $n \in \mathbb{N}$, the order $1.5$ scheme \eqref{scheme} satisfies
\[
\sup_{n \in \mathbb{N}}\sup_{0\leq t \leq T}\mathbb{E}|x_t^n|^{p_0} \leq C.
\]
\end{lemma}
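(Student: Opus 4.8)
The plan is to derive a Gronwall-type inequality for $\mathbb{E}|x_t^n|^{p_0}$ that is uniform in $n$. First I would apply Itô's formula to $|x_t^n|^{p_0}$ using the scheme \eqref{scheme}, which has drift $\tilde b^n(s,x^n_{\kappa(n,s)})$ and diffusion $\tilde\sigma^n(s,x^n_{\kappa(n,s)})$. This yields
\[
\mathbb{E}|x_t^n|^{p_0} = \mathbb{E}|x_0|^{p_0} + \mathbb{E}\int_0^t p_0|x_s^n|^{p_0-2}x_s^n\,\tilde b^n(s,x^n_{\kappa(n,s)})\,ds + \tfrac{p_0}{2}\mathbb{E}\int_0^t|x_s^n|^{p_0-2}|\tilde\sigma^n(s,x^n_{\kappa(n,s)})|^2\,ds + R_n(t),
\]
where $R_n(t)$ collects the remaining second-order terms $\tfrac{p_0(p_0-2)}{2}\mathbb{E}\int_0^t|x_s^n|^{p_0-4}|(\tilde\sigma^n)^\ast x_s^n|^2\,ds$. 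The martingale part vanishes in expectation by Corollary \ref{corollary1} together with the finiteness of $\mathbb{E}[\sup_{t\le T}|x^n_t|^{p}]$ for $p\le p_0$ from the earlier remark.

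The key decomposition is to split $x_s^n = x^n_{\kappa(n,s)} + (x_s^n - x^n_{\kappa(n,s)})$ in the inner products, so that the leading terms become $p_0|x^n_{\kappa(n,s)}|^{p_0-2}\big(x^n_{\kappa(n,s)} b^n(x^n_{\kappa(n,s)}) + \tfrac{p_0-1}{2}|\sigma^n(x^n_{\kappa(n,s)})|^2\big)$, which by the definition of the tamed coefficients and Assumption \ref{a2} is bounded by $C(1+|x^n_{\kappa(n,s)}|^{p_0})$ (the taming denominator only helps). The remaining contributions — those involving $b^n_1, b^n_2, \sigma^n_1, \sigma^n_2, \sigma^n_3$, and the one-step increment $x_s^n - x^n_{\kappa(n,s)}$ — must be shown to be of order $O(n^{-1})$ times $(1+\mathbb{E}|x^n_{\kappa(n,s)}|^{p_0})$, or at least integrable against a constant. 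Here one uses: (i) the $n^{-1/2}$ size of $b^n_1,\sigma^n_1$ and $n^{-1}$ size of $b^n_2,\sigma^n_2,\sigma^n_3$ in $\mathcal{L}^{p_0}$ coming from Lemma \ref{lemma2} (each stochastic integral over an interval of length $n^{-1}$), balanced against the $n^{1/2}$ and $n$ growth allowed by Remark \ref{remark2}; (ii) Young's inequality to absorb powers of $|x_s^n|^{p_0-1}$ or $|x_s^n|^{p_0-2}$ against the higher-moment factors, exploiting $p_0 \ge 2(5\rho+1)$ so that all exponents that arise stay below $p_0$; (iii) a bound $\mathbb{E}|x_s^n - x^n_{\kappa(n,s)}|^{p} \le C n^{-p/2}(1+\mathbb{E}|x^n_{\kappa(n,s)}|^{p})$ for $p\le p_0$, obtained from Corollary \ref{corollary1} and Burkholder–Davis–Gundy applied to the scheme over $[\kappa(n,s),s]$.

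After collecting everything, I would arrive at $\mathbb{E}|x_t^n|^{p_0} \le C + C\int_0^t \big(1+\mathbb{E}|x^n_{\kappa(n,s)}|^{p_0}\big)\,ds \le C + C\int_0^t \big(1+\sup_{0\le r\le s}\mathbb{E}|x_r^n|^{p_0}\big)\,ds$, and Gronwall's lemma closes the argument with a constant independent of $n$ and $t\le T$. The main obstacle is bookkeeping: verifying that every cross term generated by the Itô expansion — especially the mixed terms pairing a large coefficient factor (growing like a high power of $|x|$, up to $2\rho+1$ for $L^0 b$) with the taming denominator and with $|x_s^n|^{p_0-2}$ — genuinely reduces, after using the $\min(\cdot,\cdot)$ bounds of Remark \ref{remark2} and Young's inequality, to something controlled by $C(1+\mathbb{E}|x^n_{\kappa(n,s)}|^{p_0})$ uniformly in $n$. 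The condition $p_0 \ge 2(5\rho+1)$ is exactly what makes the worst such term ($|x_s^n|^{p_0-2}$ times $|L^{n,0}b|$-type growth, or the square of $\sigma_3^n$-type terms) fit under the budget, and pinning down that this is the binding constraint is the crux.
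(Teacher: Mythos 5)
Your skeleton (It\^o's formula for $|x_t^n|^{p_0}$, splitting off the term $2x_{\kappa(n,s)}^n b^n(x_{\kappa(n,s)}^n)+(p_0-1)|\sigma^n(x_{\kappa(n,s)}^n)|^2$ to invoke \ref{a2}, and Gronwall) is the same as the paper's. The gap is in your treatment of the cross terms, which you propose to close by ``Young's inequality and bookkeeping''; this would fail for the terms that carry a martingale increment. Consider $\mathbb{E}\int_0^t|x_s^n|^{p_0-2}(x_s^n-x_{\kappa(n,s)}^n)b^n(x_{\kappa(n,s)}^n)\,ds$. At this stage of the paper the only available size for the one-step increment is $O(n^{-1/4})$ in $\mathcal{L}^{p_0}$, because $\tilde\sigma^n$ is only $O(n^{p_0/4})$ in $\mathcal{L}^{p_0}$ by Corollary \ref{corollary1}; the $O(n^{-p/2})$ increment bound you invoke is Lemma \ref{lemma7}, which rests on Corollary \ref{corollary2} and hence on the conclusion of the present lemma --- using it here is circular. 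Paired with $|b^n(x)|\leq Cn^{1/2}(1+|x|)$, a direct H\"older/Young estimate of the martingale part of the increment leaves a divergent factor of order $n^{1/4}$. The same issue arises for the term involving $b_2^n$ (which Lemma \ref{lemma2} bounds only by $Cn^{p_0/4}(1+\mathbb{E}|x_{\kappa(n,t)}^n|^{p_0})$, not by a decaying power of $n$ --- you appear to be conflating Lemma \ref{lemma2} with the later Lemma \ref{lemma6}) and for the cross term $\sigma^{n}\cdot\sigma^n_M$ produced by expanding $|\tilde\sigma^n|^2$.

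The missing idea is the systematic use of conditional cancellation: one writes $|x_s^n|^{p_0-2}=|x_{\kappa(n,s)}^n|^{p_0-2}+\bigl(|x_s^n|^{p_0-2}-|x_{\kappa(n,s)}^n|^{p_0-2}\bigr)$, observes that the stochastic-integral increments ($\int_{\kappa(n,s)}^s\tilde\sigma^n\,dw_r$, $b_2^n$, $\sigma_1^n$, $\sigma_3^n$) have zero expectation against any $\mathscr{F}_{\kappa(n,s)}$-measurable factor, and then applies It\^o's formula a second time to the difference $|x_s^n|^{p_0-2}-|x_{\kappa(n,s)}^n|^{p_0-2}$ to expose an additional small factor in the surviving piece. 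This is what the paper does for its terms $G_2$, $G_5$ and $G_7$ (and it is why those estimates occupy most of the proof); only the terms involving $b_1^n$, $\sigma_2^n$ and $|\sigma_M^n|^2$ can be dispatched by Young's inequality and Lemma \ref{lemma2} alone, as you suggest. Without the cancellation step your Gronwall inequality cannot be closed uniformly in $n$.
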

\begin{proof}
It\^o's formula gives, almost surely,
\begin{align*}
|x_t^n|^{p_0} = \,	&|x_0|^{p_0} +p_0\int_0^t|x_s^n|^{p_0-2}x_s^n \tilde{b}^n(s,x_{\kappa(n,s)}^n) \,ds\\
								&+p_0\int_0^t |x_s^n|^{p_0-2}x_s^n \tilde{\sigma}^n(s,x_{\kappa(n,s)}^n) \, dw_s\\
								&+\frac{p_0}{2}\int_0^t|x_s^n|^{p_0-2}|\tilde{\sigma}^n(s,x_{\kappa(n,s)}^n) |^2 \, ds\\
								&+\frac{p_0(p_0-2)}{2}\int_0^t|x_s^n|^{p_0-4}|\tilde{\sigma}^{n\ast}(s,x_{\kappa(n,s)}^n)x_s^n |^2 \, ds,
\end{align*}
for any \(t \in [0,T]\). Then, since the expectation of the third term above is zero, one obtains
\begin{align*}
\mathbb{E}|x_t^n|^{p_0} \leq \,	& \mathbb{E}|x_0|^{p_0} +p_0\mathbb{E}\int_0^t|x_s^n|^{p_0-2}(x_s^n -x_{\kappa(n,s)}^n)b^n(x_{\kappa(n,s)}^n) \,ds\\
								&+p_0\mathbb{E}\int_0^t|x_s^n|^{p_0-2}x_{\kappa(n,s)}^nb^n(x_{\kappa(n,s)}^n) \,ds\\
								&+p_0\mathbb{E}\int_0^t|x_s^n|^{p_0-2}x_s^n b^n_1(s,x_{\kappa(n,s)}^n) \,ds
								+p_0\mathbb{E}\int_0^t|x_s^n|^{p_0-2}x_s^n b^n_2(s,x_{\kappa(n,s)}^n) \,ds\\
								&+\frac{p_0(p_0-1)}{2}\mathbb{E}\int_0^t|x_s^n|^{p_0-2}|\tilde{\sigma}^{n}(s,x_{\kappa(n,s)}^n)|^2 \, ds,
\end{align*}
which can be written as
\begin{align}
\mathbb{E}|x_t^n|^{p_0} \leq G_1+ \sum_{i=2}^7 G_i(t), \label{lp0}
\end{align}
where $G_1=\mathbb{E}|x_0|^{p_0}$,
\begin{align*}
G_2(t) &= p_0\mathbb{E}\int_0^t|x_s^n|^{p_0-2}(x_s^n -x_{\kappa(n,s)}^n)b^n(x_{\kappa(n,s)}^n) \,ds,\\
G_3(t) &= \frac{p_0}{2}\mathbb{E}\int_0^t|x_s^n|^{p_0-2}(2x_{\kappa(n,s)}^nb^n(x_{\kappa(n,s)}^n) +(p_0 -1)|\sigma^{n}(x_{\kappa(n,s)}^n)|^2)\,ds,\\
G_4(t)&=p_0\mathbb{E}\int_0^t|x_s^n|^{p_0-2}x_s^n b^n_1(s,x_{\kappa(n,s)}^n) \,ds,\\
G_5(t)& =p_0\mathbb{E}\int_0^t|x_s^n|^{p_0-2}x_s^n b^n_2(s,x_{\kappa(n,s)}^n) \,ds,\\
 G_6(t)&=\frac{p_0(p_0-1)}{2}\mathbb{E}\int_0^t|x_s^n|^{p_0-2}|\sigma^{n}_M(s,x_{\kappa(n,s)}^n)|^2 \, ds,\\
G_7(t)& = p_0(p_0-1)\mathbb{E}\int_0^t|x_s^n|^{p_0-2}\sum_{k=1}^d\sum_{v=1}^m\sigma^{n,(k,v)}(x_{\kappa(n,s)}^n)\sigma^{n,(k,v)}_M(s,x_{\kappa(n,s)}^n)\,ds.
\end{align*}
In order to estimate $G_2(t)$, one writes
\begin{align*}
G_2(t) =\,	&p_0\mathbb{E}\int_0^t|x_s^n|^{p_0-2}\int_{\kappa(n,s)}^s\tilde{b}^n(r,x_{\kappa(n,r)}^n)\, dr b^n(x_{\kappa(n,s)}^n) \,ds\\
					&+p_0\mathbb{E}\int_0^t|x_s^n|^{p_0-2}\int_{\kappa(n,s)}^s\tilde{\sigma}^n(r,x_{\kappa(n,r)}^n)\, dw_rb^n(x_{\kappa(n,s)}^n) \,ds,
\end{align*}
for any \(t \in [0,T]\). By applying Young's inequality and Remark \ref{remark2}, the following estimate can be obtained
\begin{align*}
G_2(t) \leq	\,	& C+ C\int_0^t\sup_{0 \leq r \leq s}\mathbb{E}|x_r^n|^{p_0}ds+C\mathbb{E}\int_0^t\left|n^{\frac{1}{2}}\int_{\kappa(n,s)}^s\tilde{b}^n(r,x_{\kappa(n,r)}^n)\, dr \right|^{p_0} \,ds\\
					&+p_0\mathbb{E}\int_0^t(|x_s^n|^{p_0-2}-|x_{\kappa(n,s)}^n|^{p_0-2}|)\int_{\kappa(n,s)}^s\tilde{\sigma}^n(r,x_{\kappa(n,r)}^n)\, dw_rb^n(x_{\kappa(n,s)}^n) \,ds\\
					&+p_0\mathbb{E}\int_0^t|x_{\kappa(n,s)}^n|^{p_0-2}\int_{\kappa(n,s)}^s\tilde{\sigma}^n(r,x_{\kappa(n,r)}^n)\, dw_rb^n(x_{\kappa(n,s)}^n) \,ds,
\end{align*}
for any \(t \in [0,T]\). Since the last term above is zero, by taking into consideration the results of Corollary \ref{corollary1} and by applying It\^o's formula, it follows that, almost surely
\begin{align*}
G_2(t) \leq	\,	& C+ C\int_0^t\sup_{0 \leq r \leq s}\mathbb{E}|x_r^n|^{p_0}ds\\
					&+C\mathbb{E}\int_0^t\int_{\kappa(n,s)}^s|x^n_r|^{p_0-4}x_r^n\tilde{b}^n(r,x_{\kappa(n,r)}^n)\, dr\int_{\kappa(n,s)}^s\tilde{\sigma}^n(r,x_{\kappa(n,r)}^n)\, dw_rb^n(x_{\kappa(n,s)}^n) \,ds\\
					&+C\mathbb{E}\int_0^t\int_{\kappa(n,s)}^s|x^n_r|^{p_0-4}x_r^n\tilde{\sigma}^n(r,x_{\kappa(n,r)}^n)\, dw_r\int_{\kappa(n,s)}^s\tilde{\sigma}^n(r,x_{\kappa(n,r)}^n)\, dw_rb^n(x_{\kappa(n,s)}^n) \,ds\\
					&+C\mathbb{E}\int_0^t\int_{\kappa(n,s)}^s|x^n_r|^{p_0-4}|\tilde{\sigma}^n(r,x_{\kappa(n,r)}^n)|^2\, dr|\int_{\kappa(n,s)}^s\tilde{\sigma}^n(r,x_{\kappa(n,r)}^n)\, dw_r||b^n(x_{\kappa(n,s)}^n) |\,ds.
\end{align*}
Due to Remark \ref{remark2},
\begin{align*}
G_2(t) 			& \leq C+ C\int_0^t\sup_{0 \leq r \leq s}\mathbb{E}|x_r^n|^{p_0}ds\\
					&+Cn^{\frac{1}{2}}\mathbb{E}\int_0^t\int_{\kappa(n,s)}^s|x^n_r|^{p_0-3}(1+|x_{\kappa(n,s)}^n|)|\tilde{b}^n(r,x_{\kappa(n,r)}^n)|\, dr
					\left|\int_{\kappa(n,s)}^s 	\tilde{\sigma}^n(r,x_{\kappa(n,r)}^n)\, dw_r\right|\,ds\\
					&+Cn^{\frac{1}{2}}\mathbb{E}\int_0^t\int_{\kappa(n,s)}^s |x^n_r|^{p_0-3}(1+|x_{\kappa(n,s)}^n|)|\tilde{\sigma}^n(r,x_{\kappa(n,r)}^n)|^2\, dr \,ds\\
					&+Cn^{\frac{1}{2}}\mathbb{E}\int_0^t\int_{\kappa(n,s)}^s|x^n_r|^{p_0-4}(1+|x_{\kappa(n,s)}^n|)|\tilde{\sigma}^n(r,x_{\kappa(n,r)}^n)|^2\, dr
					\left|\int_{\kappa(n,s)}^s\tilde{\sigma}^n(r,x_{\kappa(n,r)}^n)\, dw_r\right|\,ds,
\end{align*}
for any \(t \in [0,T]\). Then, the application of Young's inequality yields
\begin{align*}
G_2(t) 			& \leq C+ C\int_0^t\sup_{0 \leq r \leq s}\mathbb{E}|x_r^n|^{p_0}ds\\
					&+C\mathbb{E}\int_0^t n^{\frac{1}{4}}\int_{\kappa(n,s)}^s(1+|x^n_r|^{p_0-2}+|x_{\kappa(n,s)}^n|^{p_0-2})|\tilde{b}^n(r,x_{\kappa(n,r)}^n)|\, dr\\
					& \hspace{5em}  \times n^{\frac{1}{4}}\left|\int_{\kappa(n,s)}^s 	\tilde{\sigma}^n(r,x_{\kappa(n,r)}^n)\, dw_r\right|\,ds\\
					 &+C\mathbb{E}\int_0^t\int_{\kappa(n,s)}^sn^{1-\frac{2}{p_0}}(1+|x^n_r|^{p_0-2}+|x_{\kappa(n,s)}^n|^{p_0-2})n^{-\frac{1}{2}+\frac{2}{p_0}}|\tilde{\sigma}^n(r,x_{\kappa(n,r)}^n)|^2\, dr \,ds\\
					&+C\mathbb{E}\int_0^tn^{\frac{1}{4}}\int_{\kappa(n,s)}^s(1+|x^n_r|^{p_0-3}+|x_{\kappa(n,s)}^n|^{p_0-3})|\tilde{\sigma}^n(r,x_{\kappa(n,r)}^n)|^2\, dr\\
					&\hspace{5em}  \times n^{\frac{1}{4}}\left|\int_{\kappa(n,s)}^s\tilde{\sigma}^n(r,x_{\kappa(n,r)}^n)\, dw_r\right|\,ds,
\end{align*}
which can be further estimated as
\begin{align*}
G_2(t) 			& \leq C+ C\int_0^t\sup_{0 \leq r \leq s}\mathbb{E}|x_r^n|^{p_0}ds\\
				&+C\mathbb{E}\int_0^t \left(\int_{\kappa(n,s)}^sn^{\frac{3}{4}-\frac{1}{p_0}}(1+|x^n_r|^{p_0-2}+|x_{\kappa(n,s)}^n|^{p_0-2})n^{-\frac{1}{2}+\frac{1}{p_0}}|\tilde{b}^n(r,x_{\kappa(n,r)}^n)|\, dr\right)^{\frac{p_0}{p_0-1}}\,ds\\
				&+Cn\mathbb{E}\int_0^t\int_{\kappa(n,s)}^s(1+|x^n_r|^{p_0}+|x_{\kappa(n,s)}^n|^{p_0})\,dr\,ds \\
				&+ Cn^{-\frac{p_0}{4}+1}\mathbb{E}\int_0^t\int_{\kappa(n,s)}^s|\tilde{\sigma}^n(r,x_{\kappa(n,r)}^n)|^{p_0}\,dr\,ds\\
				&+C\mathbb{E}\int_0^t \left(\int_{\kappa(n,s)}^sn^{\frac{3}{4}-\frac{2}{p_0}}(1+|x^n_r|^{p_0-3}+|x_{\kappa(n,s)}^n|^{p_0-3})n^{-\frac{1}{2}+\frac{2}{p_0}}|\tilde{\sigma}^n(r,x_{\kappa(n,r)}^n)|^2\, dr\right)^{\frac{p_0}{p_0-1}}\,ds\\
				&+Cn^{\frac{p_0}{4}}\int_0^t \mathbb{E}\left|\int_{\kappa(n,s)}^s 	\tilde{\sigma}^n(r,x_{\kappa(n,r)}^n)\, dw_r\right|^{p_0}\,ds
\end{align*}
for any \(t \in [0,T]\). By using Young's inequality and Corollary \ref{corollary1}, one obtains
\begin{align*}
G_2(t) 			& \leq C+ C\int_0^t\sup_{0 \leq r \leq s}\mathbb{E}|x_r^n|^{p_0}ds\\
				&+C\mathbb{E}\int_0^t \left(\int_{\kappa(n,s)}^sn^{\frac{3p_0-4}{4p_0}\times \frac{p_0-1}{p_0-2}}(1+|x^n_r|^{p_0-1}+|x_{\kappa(n,s)}^n|^{p_0-1})\, dr\right)^{\frac{p_0}{p_0-1}}\,ds\\
				&+C\mathbb{E}\int_0^t \left(\int_{\kappa(n,s)}^sn^{\frac{(2-p_0)\times (p_0-1)}{2p_0}}|\tilde{b}^n(r,x_{\kappa(n,r)}^n)|^{p_0-1}\, dr\right)^{\frac{p_0}{p_0-1}}\,ds\\
				&+C\mathbb{E}\int_0^t \left(\int_{\kappa(n,s)}^sn^{\frac{3p_0-8}{4p_0}\times \frac{p_0-1}{p_0-3}}(1+|x^n_r|^{p_0-1}+|x_{\kappa(n,s)}^n|^{p_0-1})\, dr\right)^{\frac{p_0}{p_0-1}}\,ds\\
				&+C\mathbb{E}\int_0^t \left(\int_{\kappa(n,s)}^sn^{\frac{4-p_0}{2p_0}\times\frac{p_0-1}{2}}|\tilde{\sigma}^n(r,x_{\kappa(n,r)}^n)|^{p_0-1}\, dr\right)^{\frac{p_0}{p_0-1}}\,ds\\
				&+Cn^{-\frac{p_0}{4}+1}\int_0^t \int_{\kappa(n,s)}^s \mathbb{E}|\tilde{\sigma}^n(r,x_{\kappa(n,r)}^n)|^{p_0}\, dr\,ds,
\end{align*}
which, due to H\"{o}lder's inequality and Corollary \ref{corollary1}, implies
\begin{align*}
G_2(t) 			& \leq C+ C\int_0^t\sup_{0 \leq r \leq s}\mathbb{E}|x_r^n|^{p_0}ds\\
				&+Cn^{\frac{3p_0-4}{4(p_0-2)}-\frac{1}{p_0-1}}\int_0^t \mathbb{E}\int_{\kappa(n,s)}^s (1+|x^n_r|^{p_0}+|x_{\kappa(n,s)}^n|^{p_0})\, dr\,ds\\
				&+Cn^{-\frac{p_0}{2}+1-\frac{1}{p_0-1}}\int_0^t \mathbb{E}\int_{\kappa(n,s)}^s|\tilde{b}^n(r,x_{\kappa(n,r)}^n)|^{p_0}\, dr\,ds\\
				&+Cn^{\frac{3p_0-8}{4(p_0-3)}-\frac{1}{p_0-1}}\int_0^t \mathbb{E}\int_{\kappa(n,s)}^s(1+|x^n_r|^{p_0}+|x_{\kappa(n,s)}^n|^{p_0})\, dr\,ds,
\end{align*}
for any \(t \in [0,T]\). Note that in the third and fifth term above, \(n^{\frac{3p_0-4}{4(p_0-2)}}\) and \(n^{\frac{3p_0-8}{4(p_0-3)}}\) are less than $n$ for all \(p_0 \geq 4\). Thus, in view of Corollary \ref{corollary1}, one obtains
\[
G_2(t) 	 \leq C+ C\int_0^t\sup_{0 \leq r \leq s}\mathbb{E}|x_r^n|^{p_0}ds,
\]
for any \(t \in [0,T]\). For $G_3(t)$, applying \ref{a2} gives
\begin{align*}
G_3(t) 		& = \frac{p_0}{2}\mathbb{E}\int_0^t|x_s^n|^{p_0-2}\frac{2x_{\kappa(n,s)}^nb(x_{\kappa(n,s)}^n) +(p_0 -1)|\sigma(x_{\kappa(n,s)}^n)|^2}{1+n^{-3/2}|x_{\kappa(n,s)}^n|^{3\rho}}\,ds\\
			&\leq C\mathbb{E}\int_0^t|x_s^n|^{p_0-2}(1+|x_{\kappa(n,s)}^n|^2)\,ds,
\end{align*}
which, due to Young's inequality, results in
\[
G_3(t) 	 \leq C+ C\int_0^t\sup_{0 \leq r \leq s}\mathbb{E}|x_r^n|^{p_0}ds,
\]
for any \(t \in [0,T]\). To estimate $G_4(t)$, one uses Young's inequality to obtain
\[
G_4(t)	\leq C\mathbb{E}\int_0^t|x_s^n|^{p_0}\,ds +C\mathbb{E}\int_0^t|b^n_1(s,x_{\kappa(n,s)}^n) |^{p_0}\,ds,
\]
which implies due to Lemma \ref{lemma2},
\[
G_4(t) 	 \leq C+ C\int_0^t\sup_{0 \leq r \leq s}\mathbb{E}|x_r^n|^{p_0}ds,
\]
for any \(t \in [0,T]\). Moreover, one writes
\[
G_5(t) =\sum_{i=1}^3 G_{5i}(t),
\]
where
\begin{align*}
G_{51}(t)&=p_0\mathbb{E}\int_0^t|x_s^n|^{p_0-2}(x_s^n-x_{\kappa(n,s)}^n) b^n_2(s,x_{\kappa(n,s)}^n) \,ds,\\
G_{52}(t)&=p_0\mathbb{E}\int_0^t(|x_s^n|^{p_0-2}-|x_{\kappa(n,s)}^n|^{p_0-2})x_{\kappa(n,s)}^n b^n_2(s,x_{\kappa(n,s)}^n) \,ds, \\
G_{53}(t)		&=p_0\mathbb{E}\int_0^t|x_{\kappa(n,s)}^n|^{p_0-2}x_{\kappa(n,s)}^n b^n_2(s,x_{\kappa(n,s)}^n) \,ds.
\end{align*}
One then calculates the following
\begin{align*}
G_{51}(t) =\,		&p_0\mathbb{E}\int_0^t|x_s^n|^{p_0-2}\int_{\kappa(n,s)}^s\tilde{b}^n(r,x_{\kappa(n,r)}^n)\,dr b^n_2(s,x_{\kappa(n,s)}^n) \,ds\\
					&+p_0\mathbb{E}\int_0^t|x_s^n|^{p_0-2}\int_{\kappa(n,s)}^s\tilde{\sigma}^n(r,x_{\kappa(n,r)}^n)\,dw_r b^n_2(s,x_{\kappa(n,s)}^n) \,ds,
\end{align*}
which implies, due to Young's inequality,
\begin{align*}
G_{51}(t)	\leq\,	&C\mathbb{E}\int_0^t|x_s^n|^{p_0}\,ds +C\mathbb{E}\int_0^t\left|n^{\frac{1}{4}}\int_{\kappa(n,s)}^s\tilde{b}^n(r,x_{\kappa(n,r)}^n)\,dr n^{-\frac{1}{4}}b^n_2(s,x_{\kappa(n,s)}^n)\right|^{\frac{p_0}{2}}\,ds\\
						&+C\mathbb{E}\int_0^t\left|n^{\frac{1}{4}}\int_{\kappa(n,s)}^s\tilde{\sigma}^n(r,x_{\kappa(n,r)}^n)\,dw_rn^{-\frac{1}{4}} b^n_2(s,x_{\kappa(n,s)}^n) \right|^{\frac{p_0}{2}}\,ds,
\end{align*}
for any \(t \in [0,T]\). Then, on applying Young's inequality again, one obtains
\begin{align*}
G_{51}(t)	\leq\,	&C\mathbb{E}\int_0^t|x_s^n|^{p_0}\,ds +Cn^{\frac{p_0}{4}}\mathbb{E}\int_0^t\left|\int_{\kappa(n,s)}^s\tilde{b}^n(r,x_{\kappa(n,r)}^n)\,dr \right|^{p_0}\,ds \\
						&+Cn^{\frac{p_0}{4}} \mathbb{E}\int_0^t\left|\int_{\kappa(n,s)}^s\tilde{\sigma}^n(r,x_{\kappa(n,r)}^n)\,dw_r \right|^{p_0}\,ds+Cn^{-\frac{p_0}{4}}\mathbb{E}\int_0^t|b^n_2(s,x_{\kappa(n,s)}^n)|^{p_0}\,ds,
\end{align*}
which by using H\"{o}lder's inequality and Lemma \ref{lemma2} yields
\begin{align*}
G_{51}(t)	\leq\,	&C+ C\int_0^t\sup_{0 \leq r \leq s}\mathbb{E}|x_r^n|^{p_0}ds\\
						&+Cn^{\frac{p_0}{4}-p_0+1}\int_0^t\int_{\kappa(n,s)}^s\mathbb{E}|\tilde{b}^n(r,x_{\kappa(n,r)}^n)|^{p_0}\,dr\,ds \\
						&+Cn^{\frac{p_0}{4}-\frac{p_0}{2}+1} \int_0^t\int_{\kappa(n,s)}^s\mathbb{E}|\tilde{\sigma}^n(r,x_{\kappa(n,r)}^n)|^{p_0}\,dr\,ds,
\end{align*}
for any \(t \in [0,T]\). Due to Corollary \ref{corollary1}, one concludes that
\begin{equation}\label{g51}
G_{51}(t) 	 \leq C+ C\int_0^t\sup_{0 \leq r \leq s}\mathbb{E}|x_r^n|^{p_0}ds,
\end{equation}
for any \(t \in [0,T]\). As for $G_{52}(t)$, It\^o's formula gives, almost surely
\begin{align*}
G_{52}(t) \leq \,		& C\mathbb{E}\int_0^t\int_{\kappa(n,s)}^s|x_r^n|^{p_0-4}x_r^n\tilde{b}^n(r,x_{\kappa(n,r)}^n)\,drx_{\kappa(n,s)}^nb_2^n(s,x_{\kappa(n,s)}^n)\,ds \nonumber\\
					&+C\mathbb{E}\int_0^t\int_{\kappa(n,s)}^s|x_r^n|^{p_0-4}x_r^n\tilde{\sigma}^n(r,x_{\kappa(n,r)}^n)\,dw_rx_{\kappa(n,s)}^n \sum_j \int_{\kappa(n,s)}^s L^{n,j}b(x_{\kappa(n,r)}^n) \, dw_r^j\,ds \nonumber\\
					&+C\mathbb{E}\int_0^t\int_{\kappa(n,s)}^s|x_r^n|^{p_0-4}|\tilde{\sigma}^n(r,x_{\kappa(n,r)}^n)|^2\,dr|x_{\kappa(n,s)}^n||b_2^n(s,x_{\kappa(n,s)}^n)|\,ds, \nonumber
\end{align*}
which, by Young's inequality, can be expressed as
\begin{align*}
G_{52}(t) \leq \,	& C\int_0^t\mathbb{E}\int_{\kappa(n,s)}^sn^{\frac{3}{4}-\frac{1}{p_0}}(1+|x_r^n|^{p_0-2}+|x_{\kappa(n,s)}^n|^{p_0-2})n^{-\frac{1}{2}+\frac{1}{p_0}}|\tilde{b}^n(r,x_{\kappa(n,r)}^n)|\,dr\\
						&\hspace{5em}  \times n^{-\frac{1}{4}}|b_2^n(s,x_{\kappa(n,s)}^n)|\,ds \nonumber\\
						&+C\sum_{j=1}^m\int_0^t\mathbb{E}\int_{\kappa(n,s)}^s(1+|x_r^n|^{p_0-2}+|x_{\kappa(n,s)}^n|^{p_0-2})|\tilde{\sigma}^n(r,x_{\kappa(n,r)}^n)||L^{n,j}b(x_{\kappa(n,r)}^n)| \,dr\,ds \nonumber\\
						 &+C\int_0^t\mathbb{E}\int_{\kappa(n,s)}^sn^{\frac{3}{4}-\frac{2}{p_0}}(1+|x_r^n|^{p_0-3}+|x_{\kappa(n,s)}^n|^{p_0-3})n^{-\frac{1}{2}+\frac{2}{p_0}}|\tilde{\sigma}^n(r,x_{\kappa(n,r)}^n)|^2\,dr\\
						&\hspace{5em}  \times n^{-\frac{1}{4}}|b_2^n(s,x_{\kappa(n,s)}^n)|\,ds, \nonumber
\end{align*}
for any \(t \in [0,T]\). One uses Young's inequality again and Remark \ref{remark2} to obtain
\begin{align*}
G_{52}(t) \leq \,	& C\int_0^t\mathbb{E}\left(\int_{\kappa(n,s)}^sn^{\frac{3}{4}-\frac{1}{p_0}}(1+|x_r^n|^{p_0-2}+|x_{\kappa(n,s)}^n|^{p_0-2})n^{-\frac{1}{2}+\frac{1}{p_0}}|\tilde{b}^n(r,x_{\kappa(n,r)}^n)|\,dr\right)^{\frac{p_0}{p_0-1}}\,ds\\
						 &+C\int_0^t\mathbb{E}\int_{\kappa(n,s)}^sn^{1-\frac{1}{p_0}}(1+|x_r^n|^{p_0-1}+|x_{\kappa(n,s)}^n|^{p_0-1})n^{-\frac{1}{4}+\frac{1}{p_0}}|\tilde{\sigma}^n(r,x_{\kappa(n,r)}^n)|\,dr\,ds \nonumber\\
						 &+C\int_0^t\mathbb{E}\left(\int_{\kappa(n,s)}^sn^{\frac{3}{4}-\frac{2}{p_0}}(1+|x_r^n|^{p_0-3}+|x_{\kappa(n,s)}^n|^{p_0-3})n^{-\frac{1}{2}+\frac{2}{p_0}}|\tilde{\sigma}^n(r,x_{\kappa(n,r)}^n)|^2\,dr\right)^{\frac{p_0}{p_0-1}}\,ds\\
						&+Cn^{-\frac{p_0}{4}}\int_0^t\mathbb{E}|b_2^n(s,x_{\kappa(n,s)}^n)|^{p_0}\,ds,
\end{align*}
which implies due to Lemma \ref{lemma2}
\begin{align*}
G_{52}(t) \leq \,	& C\int_0^t\mathbb{E}\left(\int_{\kappa(n,s)}^sn^{\frac{3p_0-4}{4p_0}\times\frac{p_0-1}{p_0-2}}(1+|x_r^n|^{p_0-1}+|x_{\kappa(n,s)}^n|^{p_0-1})\,dr\right)^{\frac{p_0}{p_0-1}}\,ds\\
						&+C\int_0^t\mathbb{E}\left(\int_{\kappa(n,s)}^sn^{\frac{(2-p_0)\times(p_0-1)}{2p_0}}|\tilde{b}^n(r,x_{\kappa(n,r)}^n)|^{p_0-1}\,dr\right)^{\frac{p_0}{p_0-1}}\,ds\\
						&+C\int_0^t\mathbb{E}\int_{\kappa(n,s)}^sn(1+|x_r^n|^{p_0}+|x_{\kappa(n,s)}^n|^{p_0})\,dr\,ds\\
						&+C\int_0^t\mathbb{E}\int_{\kappa(n,s)}^sn^{\frac{4-p_0}{4p_0}\times p_0}|\tilde{\sigma}^n(r,x_{\kappa(n,r)}^n)|^{p_0}\,dr\,ds \nonumber\\
						 &+C\int_0^t\mathbb{E}\left(\int_{\kappa(n,s)}^sn^{\frac{3p_0-8}{4p_0}\times\frac{p_0-1}{p_0-3}}(1+|x_r^n|^{p_0-1}+|x_{\kappa(n,s)}^n|^{p_0-1})\,dr\right)^{\frac{p_0}{p_0-1}}\,ds\\
						&+C\int_0^t\mathbb{E}\left(\int_{\kappa(n,s)}^sn^{\frac{4-p_0}{2p_0}\times\frac{p_0-1}{2}}|\tilde{\sigma}^n(r,x_{\kappa(n,r)}^n)|^{p_0-1}\,dr\right)^{\frac{p_0}{p_0-1}}\,ds\\
						&+C+ C\int_0^t\sup_{0 \leq r \leq s}\mathbb{E}|x_r^n|^{p_0}ds,
\end{align*}
for any \(t \in [0,T]\). By using H\"{o}lder's inequality and Corollary \ref{corollary1},
\begin{align*}
G_{52}(t) \leq \,	& Cn^{\frac{3p_0-4}{4(p_0-2)}-\frac{1}{p_0-1}}\int_0^t\mathbb{E}\int_{\kappa(n,s)}^s(1+|x_r^n|^{p_0}+|x_{\kappa(n,s)}^n|^{p_0})\,dr\,ds\\
						&+Cn^{-\frac{p_0}{2}+1-\frac{1}{p_0-1}}\int_0^t\mathbb{E}\int_{\kappa(n,s)}^s|\tilde{b}^n(r,x_{\kappa(n,r)}^n)|^{p_0}\,dr\,ds\\
						&+Cn^{\frac{3p_0-8}{4(p_0-3)}-\frac{1}{p_0-1}}\int_0^t\mathbb{E}\int_{\kappa(n,s)}^s(1+|x_r^n|^{p_0}+|x_{\kappa(n,s)}^n|^{p_0})\,dr\,ds\\
						&+Cn^{-\frac{p_0}{4}+1-\frac{1}{p_0-1}}\int_0^t\mathbb{E}\int_{\kappa(n,s)}^s|\tilde{\sigma}^n(r,x_{\kappa(n,r)}^n)|^{p_0}\,dr\,ds\\
						&+C+ C\int_0^t\sup_{0 \leq r \leq s}\mathbb{E}|x_r^n|^{p_0}ds,
\end{align*}
for any \(t \in [0,T]\). One observes that \(n^{\frac{3p_0-4}{4(p_0-2)}}\) and \(n^{\frac{3p_0-8}{4(p_0-3)}}\) are less than $n$ for all \(p_0 \geq 4,\) then due to Corollary \ref{corollary1}, the following holds
\begin{equation}\label{g52}
G_{52}(t) 	 \leq C+ C\int_0^t\sup_{0 \leq r \leq s}\mathbb{E}|x_r^n|^{p_0}ds,
\end{equation}
for any \(t \in [0,T]\). In addition, note that by the definition of $b^n_2(t,x)$, one obtains
\begin{equation}\label{g53}
G_{53}(t):=p_0\mathbb{E}\int_0^t|x_{\kappa(n,s)}^n|^{p_0-2}x_{\kappa(n,s)}^n b^n_2(s,x_{\kappa(n,s)}^n) \,ds  = 0,
\end{equation}
for any \(t \in [0,T]\). Then, substituting \eqref{g51}, \eqref{g52} and \eqref{g53} into \eqref{g5t}, one obtains
\begin{equation}\label{g5t}
G_5(t) 	 \leq C+ C\int_0^t\sup_{0 \leq r \leq s}\mathbb{E}|x_r^n|^{p_0}ds,
\end{equation}
for any \(t \in [0,T]\). In order to estimate $G_6(t)$, one applies Young's inequality to obtain
\begin{align*}
G_6(t) \leq \,	& C\mathbb{E}\int_0^t|x_s^n|^{p_0}\,ds +C\mathbb{E}\int_0^t|\sigma^{n}_M(s,x_{\kappa(n,s)}^n)|^{p_0}\,ds \\
		\leq \,	& C\mathbb{E}\int_0^t|x_s^n|^{p_0}\,ds +C\mathbb{E}\int_0^t|\sigma^{n}_1(s,x_{\kappa(n,s)}^n)|^{p_0}\,ds \\
					&+C\mathbb{E}\int_0^t|\sigma^{n}_2(s,x_{\kappa(n,s)}^n)|^{p_0}\,ds +C\mathbb{E}\int_0^t|\sigma^{n}_3(s,x_{\kappa(n,s)}^n)|^{p_0}\,ds,
\end{align*}
which implies due to Lemma \ref{lemma2}
\[
G_6(t) 	 \leq C+ C\int_0^t\sup_{0 \leq r \leq s}\mathbb{E}|x_r^n|^{p_0}ds,
\]
for any \(t \in [0,T]\). Finally, for $G_7(t)$, one writes
\begin{equation}\label{g7}
G_7(t) =\sum_{i=1}^2 G_{7i}(t),
\end{equation}
where
\begin{align*}
G_{71}(t)&=p_0(p_0-1)\mathbb{E}\int_0^t(|x_s^n|^{p_0-2}-|x_{\kappa(n,s)}^n|^{p_0-2})\sum_{k=1}^d\sum_{v=1}^m\sigma^{n,(k,v)}(x_{\kappa(n,s)}^n)\sigma^{n,(k,v)}_M(s,x_{\kappa(n,s)}^n)  \, ds, \\
G_{72}(t)&=p_0(p_0-1)\mathbb{E}\int_0^t|x_{\kappa(n,s)}^n|^{p_0-2}\sum_{k=1}^d\sum_{v=1}^m\sigma^{n,(k,v)}(x_{\kappa(n,s)}^n)\sigma^{n,(k,v)}_M(s,x_{\kappa(n,s)}^n)  \, ds.
\end{align*}
To estimate $G_{71}(t)$, It\^o's formula gives, almost surely
\begin{align*}
G_{71}(t) := \,		&C\mathbb{E}\int_0^t(|x_s^n|^{p_0-2}-|x_{\kappa(n,s)}^n|^{p_0-2})\sum_{k=1}^d\sum_{v=1}^m\sigma^{n,(k,v)}(x_{\kappa(n,s)}^n)\sigma^{n,(k,v)}_M(s,x_{\kappa(n,s)}^n)  \, ds\\
			\leq \,	 &C\mathbb{E}\int_0^t\int_{\kappa(n,s)}^s|x_r^n|^{p_0-4}x_r^n\tilde{b}^n(r,x_{\kappa(n,r)}^n)\,dr\sum_{k=1}^d\sum_{v=1}^m\sigma^{n,(k,v)}(x_{\kappa(n,s)}^n)\sigma^{n,(k,v)}_M(s,x_{\kappa(n,s)}^n) \, ds\\
						 &+C\mathbb{E}\int_0^t\int_{\kappa(n,s)}^s|x_r^n|^{p_0-4}x_r^n\tilde{\sigma}^n(r,x_{\kappa(n,r)}^n)\,dw_r\sum_{k=1}^d\sum_{v=1}^m\sigma^{n,(k,v)}(x_{\kappa(n,s)}^n)\sigma^{n,(k,v)}_M(s,x_{\kappa(n,s)}^n) \, ds\\
						 &+C\mathbb{E}\int_0^t\int_{\kappa(n,s)}^s|x_r^n|^{p_0-4}|\tilde{\sigma}^n(r,x_{\kappa(n,r)}^n)|^2\,dr|\sum_{k=1}^d\sum_{v=1}^m\sigma^{n,(k,v)}(x_{\kappa(n,s)}^n)\sigma^{n,(k,v)}_M(s,x_{\kappa(n,s)}^n) |\, ds,
\end{align*}
which by using Remark \ref{remark2} implies
\begin{align*}
G_{71}(t) \leq \,	&Cn^{\frac{1}{4}}\mathbb{E}\int_0^t\int_{\kappa(n,s)}^s|x_r^n|^{p_0-3}(1+|x_{\kappa(n,s)}^n|)|\tilde{b}^n(r,x_{\kappa(n,r)}^n)|\,dr|\sigma^n_M(s,x_{\kappa(n,s)}^n)|\, ds\\
						&+C\mathbb{E}\int_0^t\int_{\kappa(n,s)}^s|x_r^n|^{p_0-4}x_r^n\tilde{\sigma}^n(r,x_{\kappa(n,r)}^n)\,dw_r\\
						& \hspace{3em}  \times \sum_{k=1}^d\sum_{v=1}^m\sigma^{n,(k,v)}(x_{\kappa(n,s)}^n)\sum_{j=1}^m\int_{\kappa(n,s)}^s L^{n,j}\sigma^{(k,v)}(x_{\kappa(n,r)}^n)\, dw_r^j\, ds\\
						&+C\mathbb{E}\int_0^t\int_{\kappa(n,s)}^s|x_r^n|^{p_0-4}x_r^n\tilde{\sigma}^n(r,x_{\kappa(n,r)}^n)\,dw_r\\
						& \hspace{3em}  \times \sum_{k=1}^d\sum_{v=1}^m\sigma^{n,(k,v)}(x_{\kappa(n,s)}^n)\int_{\kappa(n,s)}^s L^{n,0}\sigma^{(k,v)}(x_{\kappa(n,r)}^n)\, dr\, ds\\
						&+C\mathbb{E}\int_0^t\int_{\kappa(n,s)}^s|x_r^n|^{p_0-4}x_r^n\tilde{\sigma}^n(r,x_{\kappa(n,r)}^n)\,dw_r\\
						& \hspace{3em}  \times \sum_{k=1}^d\sum_{v=1}^m\sigma^{n,(k,v)}(x_{\kappa(n,s)}^n)\sum_{j=1}^m\sum_{j_1=1}^m\int_{\kappa(n,s)}^s \int_{\kappa(n,r)}^r L^{n,j}L^{j_1}\sigma^{(k,v)}(x_{\kappa(n,\gamma)}^n)\,dw_{\gamma}^{j_1}\, dw_r^j\, ds\\
						 &+Cn^{\frac{1}{4}}\mathbb{E}\int_0^t\int_{\kappa(n,s)}^s|x_r^n|^{p_0-4}(1+|x_{\kappa(n,s)}^n|)|\tilde{\sigma}^n(r,x_{\kappa(n,r)}^n)|^2\,dr|\sigma^{n}_M(s,x_{\kappa(n,s)}^n)|\, ds,
\end{align*}
for any \(t \in [0,T]\). One then observes that, since \(L^{n,0}\sigma(x_{\kappa(n,r)}^n)\) takes the same value for all \(r \in [\kappa(n,s),s]\), it can be taken out of the integral in the third term above, and thus the third term is zero. Moreover, by Young's inequality and Remark \ref{remark2}, one obtains
\begin{align*}
G_{71}(t) \leq \,	&C\mathbb{E}\int_0^t\int_{\kappa(n,s)}^sn^{\frac{1}{4}}(1+|x_r^n|^{p_0-2}+|x_{\kappa(n,s)}^n|^{p_0-2})|\tilde{b}^n(r,x_{\kappa(n,r)}^n)|\,dr|\sigma^n_M(s,x_{\kappa(n,s)}^n)|\, ds\\
						&+C\mathbb{E}\int_0^t\int_{\kappa(n,s)}^sn^{\frac{3}{4}}|x_r^n|^{p_0-3}(1+|x_{\kappa(n,s)}^n|)^2|\tilde{\sigma}^n(r,x_{\kappa(n,r)}^n)|\, dr\, ds\\
						 &+C\sum_{j=1}^m\mathbb{E}\int_0^t\int_{\kappa(n,s)}^sn^{\frac{3}{4}-\frac{2}{p_0}}|x_r^n|^{p_0-3}(1+|x_{\kappa(n,s)}^n|)n^{-\frac{1}{4}+\frac{1}{p_0}}|\tilde{\sigma}^n(r,x_{\kappa(n,r)}^n)|\\
						& \hspace{5em}  \times n^{-\frac{1}{4}+\frac{1}{p_0}}\left|\sum_{j_1=1}^d\int_{\kappa(n,r)}^r L^{n,j}L^{j_1}\sigma(x_{\kappa(n,\gamma)}^n)\,dw_{\gamma}^{j_1}\right|\, dr\, ds\\
						 &+C\mathbb{E}\int_0^t\int_{\kappa(n,s)}^sn^{\frac{1}{4}}(1+|x_r^n|^{p_0-3}+|x_{\kappa(n,s)}^n|^{p_0-3})|\tilde{\sigma}^n(r,x_{\kappa(n,r)}^n)|^2\,dr|\sigma^{n,(i,j)}_M(s,x_{\kappa(n,s)}^n)|\, ds,
\end{align*}
which yields, due to Young's inequality,
\begin{align*}
G_{71}(t) \leq \,	 &C\int_0^t\mathbb{E}\left(\int_{\kappa(n,s)}^sn^{\frac{3}{4}-\frac{1}{p_0}}(1+|x_r^n|^{p_0-2}+|x_{\kappa(n,s)}^n|^{p_0-2})n^{-\frac{1}{2}+\frac{1}{p_0}}|\tilde{b}^n(r,x_{\kappa(n,r)}^n)|\,dr\right)^{\frac{p_0}{p_0-1}}\,ds\\
						 &+C\mathbb{E}\int_0^t\int_{\kappa(n,s)}^sn^{1-\frac{1}{p_0}}(1+|x_r^n|^{p_0-1}+|x_{\kappa(n,s)}^n|^{p_0-1})n^{-\frac{1}{4}+\frac{1}{p_0}}|\tilde{\sigma}^n(r,x_{\kappa(n,r)}^n)|\, dr\, ds\\
						 &+C\mathbb{E}\int_0^t\int_{\kappa(n,s)}^s\left(n^{\frac{3}{4}-\frac{2}{p_0}}(1+|x_r^n|^{p_0-2}+|x_{\kappa(n,s)}^n|^{p_0-2})n^{-\frac{1}{4}+\frac{1}{p_0}}|\tilde{\sigma}^n(r,x_{\kappa(n,r)}^n)|\right)^{\frac{p_0}{p_0-1}}\,dr\,ds\\
						 &+C\int_0^t\mathbb{E}\left(\int_{\kappa(n,s)}^sn^{\frac{3}{4}-\frac{2}{p_0}}(1+|x_r^n|^{p_0-3}+|x_{\kappa(n,s)}^n|^{p_0-3})n^{-\frac{1}{2}+\frac{2}{p_0}}|\tilde{\sigma}^n(r,x_{\kappa(n,r)}^n)|^2\,dr\right)^{\frac{p_0}{p_0-1}}\,ds\\
						& +C\sum_{j=1}^mn^{-\frac{p_0}{4}+1}\mathbb{E}\int_0^t\int_{\kappa(n,s)}^s\left|\sum_{j_1=1}^d\int_{\kappa(n,r)}^r L^{n,j}L^{j_1}\sigma(x_{\kappa(n,\gamma)}^n)\,dw_{\gamma}^{j_1}\right|^{p_0}\, dr\, ds\\
						&+C\int_0^t\mathbb{E}|\sigma^n_M(s,x_{\kappa(n,s)}^n)|^{p_0}\, ds,
\end{align*}
for any \(t \in [0,T]\). By Young's inequality,  H\"{o}lder's inequality and Lemma \ref{lemma2},
\begin{align*}
G_{71}(t) \leq \,	&Cn^{\frac{3p_0-4}{4(p_0-2)}-\frac{1}{p_0-1}}\int_0^t\mathbb{E}\int_{\kappa(n,s)}^s(1+|x_r^n|^{p_0}+|x_{\kappa(n,s)}^n|^{p_0})\,dr\,ds\\
						&+Cn^{-\frac{p_0}{2}+1-\frac{1}{p_0-1}}\int_0^t\int_{\kappa(n,s)}^s\mathbb{E}|\tilde{b}^n(r,x_{\kappa(n,r)}^n)|^{p_0}\,dr\,ds\\
						&+Cn\mathbb{E}\int_0^t\int_{\kappa(n,s)}^s(1+|x_r^n|^{p_0}+|x_{\kappa(n,s)}^n|^{p_0})\,dr\,ds\\
						&+Cn^{-\frac{p_0}{4}+1}\int_0^t\int_{\kappa(n,s)}^s\mathbb{E}|\tilde{\sigma}^n(r,x_{\kappa(n,r)}^n)|^{p_0}\, dr\, ds\\
						&+Cn^{\frac{3p_0-8}{4(p_0-2)}}\mathbb{E}\int_0^t\int_{\kappa(n,s)}^s(1+|x_r^n|^{p_0}+|x_{\kappa(n,s)}^n|^{p_0})\,dr\,ds\\
						&+Cn^{-\frac{p_0}{4}+1}\int_0^t\int_{\kappa(n,s)}^s\mathbb{E}|\tilde{\sigma}^n(r,x_{\kappa(n,r)}^n)|^{p_0}\,dr\,ds\\
						&+Cn^{\frac{3p_0-8}{4(p_0-3)}-\frac{1}{p_0-1}}\int_0^t\mathbb{E}\int_{\kappa(n,s)}^s(1+|x_r^n|^{p_0}+|x_{\kappa(n,s)}^n|^{p_0})\,dr\,ds\\
						&+Cn^{-\frac{p_0}{4}+1-\frac{1}{p_0-1}}\int_0^t\int_{\kappa(n,s)}^s\mathbb{E}|\tilde{\sigma}^n(r,x_{\kappa(n,r)}^n)|^{p_0}\,dr\,ds\\
						& +Cn^{-\frac{3p_0}{4}+2}\int_0^t\int_{\kappa(n,s)}^s\int_{\kappa(n,r)}^r \mathbb{E}|L^{n,j}L^{j_1}\sigma(x_{\kappa(n,\gamma)}^n)|^{p_0}\,d{\gamma}\, dr\, ds\\
						&+C+ C\int_0^t\sup_{0 \leq r \leq s}\mathbb{E}|x_r^n|^{p_0}ds,
\end{align*}
for any \(t \in [0,T]\). Due to Corollary \ref{corollary1} and Remark \ref{remark2}, it can be shown that
\begin{equation}\label{g71}
G_{71}(t)	 \leq C+ C\int_0^t\sup_{0 \leq r \leq s}\mathbb{E}|x_r^n|^{p_0}ds,
\end{equation}
for any \(t \in [0,T]\). In order to estimate $G_{72}(t)$, one writes
\begin{align*}
G_{72}(t) 	&=p_0(p_0-1)\mathbb{E}\int_0^t|x_{\kappa(n,s)}^n|^{p_0-2}\sum_{k=1}^d\sum_{v=1}^m\sigma^{n,(k,v)}(x_{\kappa(n,s)}^n)\sum_{j=1}^m\int_{\kappa(n,s)}^s L^{n,j}\sigma^{(k,v)}(x_{\kappa(n,r)}^n)\, dw_r^j\, ds\\
						&\hspace{1em}+p_0(p_0-1)\mathbb{E}\int_0^t|x_{\kappa(n,s)}^n|^{p_0-2}\sum_{k=1}^d\sum_{v=1}^m\sigma^{n,(k,v)}(x_{\kappa(n,s)}^n)\int_{\kappa(n,s)}^s L^{n,0}\sigma^{(k,v)}(x_{\kappa(n,r)}^n)\, dr\, ds\\
						&\hspace{1em}+p_0(p_0-1)\mathbb{E}\int_0^t|x_{\kappa(n,s)}^n|^{p_0-2}\sum_{k=1}^d\sum_{v=1}^m\sigma^{n,(k,v)}(x_{\kappa(n,s)}^n)\\
						& \hspace{5em} \times \sum_{j=1}^m\sum_{j_1=1}^m\int_{\kappa(n,s)}^s \int_{\kappa(n,r)}^r L^{n,j}L^{j_1}\sigma^{(k,v)}(x_{\kappa(n,\gamma)}^n)\,dw_{\gamma}^{j_1}\, dw_r^j\, ds,
\end{align*}
which implies, due to Remark \ref{remark2} and the fact that the first and third terms are zero,
\begin{align*}
G_{72}(t)  \leq \,	&C\mathbb{E}\int_0^t\int_{\kappa(n,s)}^sn(1+|x_{\kappa(n,s)}^n|)^{p_0}\, dr\, ds,
\end{align*}
for any \(t \in [0,T]\). Then, one obtains
\begin{equation}\label{g72}
G_{72}(t)	 \leq C+ C\int_0^t\sup_{0 \leq r \leq s}\mathbb{E}|x_r^n|^{p_0}ds,
\end{equation}
for any \(t \in [0,T]\). Furthermore, substituting \eqref{g71} and \eqref{g72} into \eqref{g7} yields
\[
G_{7}	 \leq C+ C\int_0^t\sup_{0 \leq r \leq s}\mathbb{E}|x_r^n|^{p_0}ds,
\]
for any \(t \in [0,T]\). Therefore, for any \(n \in \mathbb{N}\) and $t \in [0,T]$,
\[
\sup_{0 \leq s \leq t}\mathbb{E}|x_s^n|^{p_0} \leq C+ C\int_0^t\sup_{0 \leq r \leq s}\mathbb{E}|x_r^n|^{p_0}ds<\infty,
\]
and applying Gronwall's lemma completes the proof.
\end{proof}

\section{Proof of main result}
\begin{lemma}\label{mvt}
Let \(f: \mathbb{R}^d \rightarrow \mathbb{R}\) be a twice continuously differentiable function. If there exist constants \(\alpha \in \mathbb{R}\), $K>0$ and $\beta \in (0,1]$, such that for any \(x, \bar{x} \in \mathbb{R}^d\), 
\[
|\nabla^2f(x)-\nabla^2f(\bar{x})| \leq K(1+|x|+|\bar{x}|)^{\alpha}|x-\bar{x}|^{\beta},
\]
then, there is a constant \(C>0\) such that for any \(x, \bar{x} \in \mathbb{R}^d\), and \(i = 1, \dots,d\),
\[
\left|\frac{\partial f(x)}{\partial y^{(i)}} - \frac{\partial f(\bar{x})}{\partial y^{(i)}} -\sum_{j = 1}^d \frac{\partial^2 f(\bar{x})}{\partial y^{(i)}\partial y^{(j)}}(x^{(j)} - \bar{x}^{(j)}) \right|\leq C(1+|x|+|\bar{x}|)^{\alpha}|x-\bar{x}|^{1+\beta}.
\]
\end{lemma}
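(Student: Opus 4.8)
I would recognise the left‑hand side as the \emph{first–order Taylor remainder} of the scalar function $\partial f/\partial y^{(i)}$ between $\bar{x}$ and $x$, write that remainder as an integral along the segment joining the two points, and then estimate it with the hypothesis on $\nabla^{2}f$, being careful about the polynomial weight. Concretely, put $\gamma(t)=\bar{x}+t(x-\bar{x})$ for $t\in[0,1]$. Since $f\in C^{2}(\mathbb{R}^{d})$, the map $t\mapsto \partial f(\gamma(t))/\partial y^{(i)}$ is $C^{1}$, so the fundamental theorem of calculus gives
\[
\frac{\partial f(x)}{\partial y^{(i)}}-\frac{\partial f(\bar{x})}{\partial y^{(i)}}
=\int_{0}^{1}\sum_{j=1}^{d}\frac{\partial^{2}f(\gamma(t))}{\partial y^{(i)}\partial y^{(j)}}\bigl(x^{(j)}-\bar{x}^{(j)}\bigr)\,dt .
\]
Writing the correction term $\sum_{j}\frac{\partial^{2}f(\bar{x})}{\partial y^{(i)}\partial y^{(j)}}(x^{(j)}-\bar{x}^{(j)})$ as $\int_{0}^{1}$ of the corresponding constant integrand, the quantity to be bounded is exactly
\[
R_{i}:=\int_{0}^{1}\sum_{j=1}^{d}\Bigl(\tfrac{\partial^{2}f(\gamma(t))}{\partial y^{(i)}\partial y^{(j)}}-\tfrac{\partial^{2}f(\bar{x})}{\partial y^{(i)}\partial y^{(j)}}\Bigr)\bigl(x^{(j)}-\bar{x}^{(j)}\bigr)\,dt .
\]

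\textbf{Pointwise bound on the integrand.} For fixed $t$, the vector indexed by $j$ inside the sum is the $i$‑th row of the symmetric matrix $\nabla^{2}f(\gamma(t))-\nabla^{2}f(\bar{x})$, so its Euclidean norm is at most $|\nabla^{2}f(\gamma(t))-\nabla^{2}f(\bar{x})|$. Applying the Cauchy–Schwarz inequality, then the hypothesis, and using $|\gamma(t)-\bar{x}|=t|x-\bar{x}|$, one gets
\[
\Bigl|\sum_{j}\Bigl(\tfrac{\partial^{2}f(\gamma(t))}{\partial y^{(i)}\partial y^{(j)}}-\tfrac{\partial^{2}f(\bar{x})}{\partial y^{(i)}\partial y^{(j)}}\Bigr)(x^{(j)}-\bar{x}^{(j)})\Bigr|
\le K\,(1+|\gamma(t)|+|\bar{x}|)^{\alpha}\,t^{\beta}\,|x-\bar{x}|^{1+\beta}.
\]
Hence $|R_{i}|\le K\,|x-\bar{x}|^{1+\beta}\int_{0}^{1}(1+|\gamma(t)|+|\bar{x}|)^{\alpha}t^{\beta}\,dt$, and everything reduces to showing that this time integral is $\le C(1+|x|+|\bar{x}|)^{\alpha}$.

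\textbf{The weighted time integral.} If $\alpha\ge0$, then $|\gamma(t)|\le(1-t)|\bar{x}|+t|x|\le|x|+|\bar{x}|$, so $1+|\gamma(t)|+|\bar{x}|\le 2(1+|x|+|\bar{x}|)$ and the integral is at most $\frac{2^{\alpha}}{1+\beta}(1+|x|+|\bar{x}|)^{\alpha}$. If $\alpha<0$ the weight cannot be pulled out so cheaply, and I would split according to the size of $r:=|x-\bar{x}|$, using the two lower bounds $1+|\gamma(t)|+|\bar{x}|\ge 1+|\bar{x}|$ and $1+|\gamma(t)|+|\bar{x}|\ge 1+|\gamma(t)-\bar{x}|=1+t r$. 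When $r\le 1+|\bar{x}|$, the first bound gives $(1+|\gamma(t)|+|\bar{x}|)^{\alpha}\le(1+|\bar{x}|)^{\alpha}$, and since $1+|x|+|\bar{x}|\le 3(1+|\bar{x}|)$ in this range we get $(1+|\bar{x}|)^{\alpha}\le 3^{-\alpha}(1+|x|+|\bar{x}|)^{\alpha}$; integrating $t^{\beta}$ finishes this sub‑case. When $r>1+|\bar{x}|$, the second bound together with the substitution $u=tr$ yields
\[
\int_{0}^{1}(1+tr)^{\alpha}t^{\beta}\,dt = r^{-1-\beta}\int_{0}^{r}(1+u)^{\alpha}u^{\beta}\,du \le r^{-1-\beta}\,\frac{(1+r)^{\alpha+\beta+1}}{\alpha+\beta+1}\le \frac{2^{\alpha+\beta+1}}{\alpha+\beta+1}\,r^{\alpha},
\]
where $(1+u)^{\alpha}u^{\beta}\le(1+u)^{\alpha+\beta}$, $r\ge1$, and $\alpha+\beta+1>0$ were used; since $1+|x|+|\bar{x}|\le 4r$ in this range and $\alpha<0$, $r^{\alpha}\le 4^{-\alpha}(1+|x|+|\bar{x}|)^{\alpha}$. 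Combining the three sub‑cases with the pointwise bound gives the assertion, with $C=C(K,\alpha,\beta)$. (The condition $\alpha+\beta+1>0$ is harmless for every application in this paper: \ref{a4} gives $\alpha=\rho-2\ge0$, covered by the first case, while \ref{a5} gives $\alpha=\tfrac{\rho-4}{2}$, $\beta\in(0,1]$, so $\alpha+\beta+1=\tfrac{\rho-2}{2}+\beta\ge\beta>0$.)

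\textbf{Main obstacle.} The single delicate point is the case $\alpha<0$ of the weighted integral: one must not replace $|\gamma(t)|$ by $|x|$, as that would destroy the decay of the weight, and the dichotomy according to whether $|x-\bar{x}|$ is small or large relative to $1+|\bar{x}|$ — together with the observation that the $t^{\beta}$ factor supplies exactly the integrability needed near $t=0$ — is what makes the integral collapse to the correct power $(1+|x|+|\bar{x}|)^{\alpha}$. All the remaining steps (the Taylor identity, Cauchy–Schwarz, the row‑versus‑Hilbert–Schmidt comparison, the elementary bounds on $1+|\gamma(t)|+|\bar{x}|$) are routine.
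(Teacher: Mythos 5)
Your proof is correct, and it takes a somewhat more careful route than the paper. The paper applies the mean value theorem to get a \emph{single} intermediate point $qx+(1-q)\bar{x}$, subtracts the Hessian at $\bar{x}$, and then immediately asserts the bound $(1+|qx+(1-q)\bar{x}|+|\bar{x}|)^{\alpha}|q(x-\bar{x})|^{\beta}\leq C(1+|x|+|\bar{x}|)^{\alpha}|x-\bar{x}|^{\beta}$; this is immediate only for $\alpha\geq 0$, since for $\alpha<0$ the intermediate point may sit near $\bar{x}$ where the weight is large, and a pointwise bound at a single uncontrolled $q$ cannot recover the factor $(1+|x|+|\bar{x}|)^{\alpha}$ (e.g.\ $\bar{x}=0$, $|x|=r$ large, $q\sim 1/r$). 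You instead write the remainder as an integral over the whole segment and exploit the averaging in $t$, together with the dichotomy on $|x-\bar{x}|$ versus $1+|\bar{x}|$, to handle $\alpha<0$ under the condition $\alpha+\beta+1>0$. This matters here because \ref{a5} gives $\alpha=\frac{\rho-4}{2}$, which is negative when $\rho<4$, so the paper's one-line justification is genuinely incomplete in exactly the case your extra work covers; and as you note, $\alpha+\beta+1>0$ holds in every application of the lemma in the paper (the lemma's claim for completely arbitrary $\alpha\in\mathbb{R}$ is not established by either argument, but is never needed). The remaining ingredients (Taylor identity, row-norm versus Hilbert--Schmidt norm, Cauchy--Schwarz) coincide with the paper's. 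In short: same core idea, but your integral form of the remainder and the treatment of the weighted time integral buy a rigorous proof in the negative-$\alpha$ regime that the paper's proof glosses over.
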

\begin{proof}
One uses the mean value theorem to obtain that, for all  $x, \bar{x} \in \mathbb{R}^d$, \(i = 1, \dots,d\), there exists $q \in [0,1]$, such that
\[
\frac{\partial f(x)}{\partial y^{(i)}} - \frac{\partial f(\bar{x})}{\partial y^{(i)}}  = \sum_{j = 1}^d \frac{\partial^2 f((qx+(1-q)\bar{x})}{\partial y^{(i)}\partial y^{(j)}}(x^{(j)} - \bar{x}^{(j)}).
\]
Then for a fixed \(q \in (0,1)\),
\begin{align*}
&\left|\frac{\partial f(x)}{\partial y^{(i)}} - \frac{\partial f(\bar{x})}{\partial y^{(i)}} -\sum_{j = 1}^d \frac{\partial^2 f(\bar{x})}{\partial y^{(i)}\partial y^{(j)}}(x^{(j)} - \bar{x}^{(j)}) \right|\\
& \hspace{1em} = \left|\sum_{j = 1}^d \frac{\partial^2 f((qx+(1-q)\bar{x})}{\partial y^{(i)}\partial y^{(j)}}(x^{(j)} - \bar{x}^{(j)}) -\sum_{j = 1}^d \frac{\partial^2 f(\bar{x})}{\partial y^{(i)}\partial y^{(j)}}(x^{(j)} - \bar{x}^{(j)}) \right|\\
& \hspace{1em} \leq \sum_{j = 1}^d\left| \frac{\partial^2 f((qx+(1-q)\bar{x})}{\partial y^{(i)}\partial y^{(j)}}- \frac{\partial^2 f(\bar{x})}{\partial y^{(i)}\partial y^{(j)}} \right||x^{(j)} - \bar{x}^{(j)}|\\
& \hspace{1em} \leq C(1+|x|+|\bar{x}|)^{\alpha}|x-\bar{x}|^{1+\beta}.
\end{align*}
\end{proof}
\begin{lemma}\label{lemma6}
Assume \ref{a1} to \ref{a5} hold, then, there exists a constant $C>0$, such that for any \(p\leq \frac{p_0}{2\rho+1}\) and \(n \in \mathbb{N}\),
\[
\sup_{0 \leq t \leq T}\mathbb{E}|b_1^n(t, x_{\kappa(n,t)}^n)|^p \leq Cn^{-p}, \quad 
\sup_{0 \leq t \leq T}\mathbb{E}|b_2^n(t, x_{\kappa(n,t)}^n)|^p \leq Cn^{-\frac{p}{2}},
\]
\[
\sup_{0 \leq t \leq T}\mathbb{E}|\sigma_1^n(t, x_{\kappa(n,t)}^n)|^p \leq Cn^{-\frac{p}{2}}, \quad \sup_{0 \leq t \leq T}\mathbb{E}|\sigma_2^n(t, x_{\kappa(n,t)}^n)|^p \leq Cn^{-p},
\]
\[
\sup_{0 \leq t \leq T}\mathbb{E}|\sigma_3^n(t, x_{\kappa(n,t)}^n)|^p \leq Cn^{-p}.
\]
\end{lemma}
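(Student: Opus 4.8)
The plan is to exploit the fact that in each of the five quantities the spatial argument $x_{\kappa(n,t)}^n$ is \emph{frozen}: it is $\mathscr{F}_{\kappa(n,t)}$-measurable and constant over the integration window $[\kappa(n,t),t]$, whose length is at most $1/n$. Consequently each term factorises as an $\mathscr{F}_{\kappa(n,t)}$-measurable coefficient — one of $L^{n,0}b$, $L^{n,j}b$, $L^{n,0}\sigma$, $L^{n,j}\sigma$, $L^{n,j}L^{j_1}\sigma$ evaluated at $x_{\kappa(n,t)}^n$ — times a plain iterated Lebesgue/It\^o integral of the constant $1$ over an interval of length $\le 1/n$. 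A single $ds$-integration then contributes a factor $n^{-1}$ (hence $n^{-p}$ in $\mathcal{L}^p$), a single stochastic integration contributes $n^{-1/2}$ (hence $n^{-p/2}$), and the double stochastic integration in $\sigma_3^n$ contributes $n^{-1}$ (hence $n^{-p}$). In every case the coefficient is controlled in expectation, uniformly in $n$ and $t$, by Lemma \ref{lemmalp0} together with the polynomial growth bounds of Remarks \ref{remark1}--\ref{remark2}, as long as $p$ is small enough.

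Concretely, for the Lebesgue-type terms I would bound pointwise
\[
|b_1^n(t,x_{\kappa(n,t)}^n)| \le n^{-1}\,|L^{n,0}b(x_{\kappa(n,t)}^n)|, \qquad |\sigma_2^n(t,x_{\kappa(n,t)}^n)| \le n^{-1}\,|L^{n,0}\sigma(x_{\kappa(n,t)}^n)|,
\]
while for the single–stochastic–integral terms I would use that $b_2^n(t,x_{\kappa(n,t)}^n)=\sum_j L^{n,j}b(x_{\kappa(n,t)}^n)(w_t^j-w_{\kappa(n,t)}^j)$ (and similarly for $\sigma_1^n$), so that conditioning on $\mathscr{F}_{\kappa(n,t)}$ and invoking the moments of Brownian increments gives
\[
\mathbb{E}|b_2^n(t,x_{\kappa(n,t)}^n)|^p \le Cn^{-p/2}\,\mathbb{E}|L^{n,j}b(x_{\kappa(n,t)}^n)|^p, \qquad \mathbb{E}|\sigma_1^n(t,x_{\kappa(n,t)}^n)|^p \le Cn^{-p/2}\,\mathbb{E}|L^{n,j}\sigma(x_{\kappa(n,t)}^n)|^p.
\]
Since the taming only decreases the norm (i.e.\ $|L^{n,0}b(x)|\le|L^0b(x)|$, etc.) and Remark \ref{remark1} gives $|L^0b(x)|\le C(1+|x|)^{2\rho+1}$, $|L^jb(x)|\le C(1+|x|)^{\frac32\rho+1}$, $|L^0\sigma(x)|\le C(1+|x|)^{\frac32\rho+1}$, $|L^j\sigma(x)|\le C(1+|x|)^{\rho+1}$, all these exponents are $\le 2\rho+1$ (because $\rho\ge 2$), and the hypothesis $p\le \frac{p_0}{2\rho+1}$ combined with Lemma \ref{lemmalp0} makes the corresponding expectations bounded by a constant independent of $n$ and $t$. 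Taking the supremum over $t\in[0,T]$ gives the first four estimates.

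For $\sigma_3^n$ the argument is the same in spirit but needs the sharper scaling: writing $\sigma_3^n(t,x_{\kappa(n,t)}^n)=\sum_{j,j_1}L^{n,j}L^{j_1}\sigma(x_{\kappa(n,t)}^n)\int_{\kappa(n,t)}^t (w_s^{j}-w_{\kappa(n,t)}^{j})\,dw_s^{j_1}$, I would apply the Burkholder--Davis--Gundy inequality (conditionally on $\mathscr{F}_{\kappa(n,t)}$) to the inner It\^o integral and then bound $\int_{\kappa(n,t)}^t|w_s-w_{\kappa(n,t)}|^2\,ds \le (t-\kappa(n,t))\sup_{\kappa(n,t)\le s\le t}|w_s-w_{\kappa(n,t)}|^2$, extracting two separate factors of $(t-\kappa(n,t))^{1/2}$ and thus $\mathbb{E}\big|\int_{\kappa(n,t)}^t(w_s^{j}-w_{\kappa(n,t)}^{j})\,dw_s^{j_1}\big|^p \le Cn^{-p}$; combined with $|L^jL^{j_1}\sigma(x)|\le C(1+|x|)^{\frac32\rho+1}$ from Remark \ref{remark1} and Lemma \ref{lemmalp0}, this yields the last estimate. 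The routine parts are the growth estimates (already packaged in Remarks \ref{remark1}--\ref{remark2}) and the standard increment moment bounds; the only point that requires genuine care is obtaining $n^{-p}$, rather than the naive $n^{-p/2}$, for the double stochastic integral in $\sigma_3^n$, i.e.\ correctly harvesting both half-powers of the step size from the iterated BDG estimate — that is the step I would treat most carefully.
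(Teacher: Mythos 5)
Your proposal is correct and follows essentially the same route as the paper: extract the appropriate power of $n^{-1}$ from the integration window of length at most $1/n$ (via the constancy of the integrand, Brownian increment moments, and BDG for the double stochastic integral), then control the frozen coefficient using the growth bounds of Remark \ref{remark1} and the moment bound of Lemma \ref{lemmalp0}, with the constraint $p\le p_0/(2\rho+1)$ coming from the worst exponent $2\rho+1$ of $L^0b$. The paper only writes out the $b_1^n$ case (using H\"older's inequality rather than your exact factorisation, which makes no difference) and declares the rest similar, so your more explicit treatment of $\sigma_3^n$ is a welcome elaboration rather than a deviation.
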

\begin{proof}
By applying  H\"{o}lder's inequality and Remark \ref{remark1}, one obtains, for any \(p\leq \frac{p_0}{2\rho+1}\),
\begin{align*}
\mathbb{E}|b_1^n(t, x_{\kappa(n,t)}^n)|^p 	& = \mathbb{E}\left|\int_{\kappa(n,t)}^t L^{n,0}b(x_{\kappa(n,s)}^n)\,ds\right|^p\\
								& \leq Cn^{-p+1}\int_{\kappa(n,t)}^t\mathbb{E}|L^{n,0}b(x_{\kappa(n,s)}^n)|^{p}\,ds\\
								& \leq Cn^{-p+1}\int_{\kappa(n,t)}^t\mathbb{E}(1+|x_{\kappa(n,s)}^n|)^{(2\rho+1)p}\,ds\\
								& \leq Cn^{-p},
\end{align*}
where the last inequality holds due to Lemma \ref{lemmalp0}. Other results can be proved by using similar arguments.
\end{proof}
\begin{corollary} \label{corollary2}
Assume \ref{a1} to \ref{a5} hold, then, there exists a constant $C>0$, such that for any \(p\leq \frac{p_0}{2\rho+1}\) and \(n \in \mathbb{N}\),
\[
\sup_{0 \leq t \leq T}\mathbb{E}|\tilde{b}^n(t, x_{\kappa(n,t)}^n)|^p \leq C, \quad 
\sup_{0 \leq t \leq T}\mathbb{E}|\tilde{\sigma}^n(t, x_{\kappa(n,t)}^n)|^p \leq C.
\]
\end{corollary}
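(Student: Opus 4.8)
The plan is to reduce the statement to pieces that have already been controlled: the growth bounds of Remark \ref{remark1}, the taming bounds of Remark \ref{remark2}, the uniform moment bound of Lemma \ref{lemmalp0}, and the decay estimates of Lemma \ref{lemma6}. Recall the decompositions $\tilde{b}^n(t,x) = b^n(x) + b_1^n(t,x) + b_2^n(t,x)$ and $\tilde{\sigma}^n(t,x) = \sigma^n(x) + \sigma_1^n(t,x) + \sigma_2^n(t,x) + \sigma_3^n(t,x)$. By the elementary inequality $|a_1+\dots+a_k|^p \le C_{k,p}(|a_1|^p+\dots+|a_k|^p)$, valid with $C_{k,p}=k^{p-1}$ for $p\ge 1$ and $C_{k,p}=1$ for $p\le 1$, it suffices to bound the $p$-th moment of each summand, uniformly in $n\in\mathbb{N}$ and $t\in[0,T]$, by a constant.

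First I would handle the leading terms $b^n(x_{\kappa(n,t)}^n)$ and $\sigma^n(x_{\kappa(n,t)}^n)$. By Remark \ref{remark2} one has $|b^n(x)|\le |b(x)|$ and $|\sigma^n(x)|\le|\sigma(x)|$, while Remark \ref{remark1} gives $|b(x)|\le K(1+|x|)^{\rho+1}$ and $|\sigma(x)|\le K(1+|x|)^{\rho/2+1}$. Hence
\[
\mathbb{E}|b^n(x_{\kappa(n,t)}^n)|^p \le C\,\mathbb{E}(1+|x_{\kappa(n,t)}^n|)^{(\rho+1)p}, \qquad \mathbb{E}|\sigma^n(x_{\kappa(n,t)}^n)|^p \le C\,\mathbb{E}(1+|x_{\kappa(n,t)}^n|)^{(\rho/2+1)p}.
\]
Since $p\le \frac{p_0}{2\rho+1}$, one has $(\rho+1)p\le\frac{(\rho+1)p_0}{2\rho+1}\le p_0$ and $(\rho/2+1)p\le\frac{(\rho+2)p_0}{2(2\rho+1)}\le p_0$, so after one application of Jensen's inequality (to pass from an exponent $\le p_0$ to exactly $p_0$) Lemma \ref{lemmalp0} bounds both right-hand sides by a constant independent of $n$ and $t$.

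The remaining summands are precisely those estimated in Lemma \ref{lemma6}, which gives, uniformly in $t\in[0,T]$,
\[
\mathbb{E}|b_1^n(t,x_{\kappa(n,t)}^n)|^p\le Cn^{-p}, \quad \mathbb{E}|b_2^n(t,x_{\kappa(n,t)}^n)|^p\le Cn^{-p/2},
\]
together with the corresponding bounds $Cn^{-p/2}$, $Cn^{-p}$, $Cn^{-p}$ for $\sigma_1^n$, $\sigma_2^n$, $\sigma_3^n$. As $n\ge 1$, each of these is at most $C$. Combining with the previous paragraph via the elementary inequality above yields $\sup_{0\le t\le T}\mathbb{E}|\tilde{b}^n(t,x_{\kappa(n,t)}^n)|^p\le C$ and $\sup_{0\le t\le T}\mathbb{E}|\tilde{\sigma}^n(t,x_{\kappa(n,t)}^n)|^p\le C$ with $C$ independent of $n$.

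There is essentially no genuine obstacle here; the result is a bookkeeping corollary of the preceding lemmas. The only point to verify carefully is the exponent arithmetic: one must check that multiplying the polynomial growth exponents $\rho+1$ of $b$ and $\rho/2+1$ of $\sigma$ by the largest admissible $p=\frac{p_0}{2\rho+1}$ keeps the result within the moment order $p_0$ for which the uniform bound of Lemma \ref{lemmalp0} is available. Both requirements reduce to the trivial inequalities $\rho+1\le 2\rho+1$ and $\rho+2\le 2(2\rho+1)$, which hold for all $\rho\ge 0$.
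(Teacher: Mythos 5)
Your proof is correct and is precisely the argument the paper intends: the corollary is stated without proof because it follows immediately from the decomposition of $\tilde{b}^n$ and $\tilde{\sigma}^n$, the bounds $|b^n|\leq|b|$, $|\sigma^n|\leq|\sigma|$ of Remark \ref{remark2} combined with the polynomial growth of Remark \ref{remark1} and the uniform moment bound of Lemma \ref{lemmalp0}, and the estimates of Lemma \ref{lemma6} for the correction terms. Your exponent check $(\rho+1)p\leq p_0$ and $(\rho/2+1)p\leq p_0$ for $p\leq p_0/(2\rho+1)$ is exactly the point that makes the reduction work.
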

\begin{lemma}\label{lemma7}
Assume \ref{a1} to \ref{a5} hold, then, there exists a constant $C>0$, such that for any \(p\leq \frac{p_0}{2\rho+1}\) and \(n \in \mathbb{N}\),
\[
\sup_{0 \leq t \leq T}\mathbb{E}|x_t^n -x_{\kappa(n,t)}^n |^p \leq Cn^{-\frac{p}{2}}.
\]
\end{lemma}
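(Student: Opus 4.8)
The plan is to read off the increment directly from the scheme \eqref{scheme}. Since $\kappa(n,s)=\kappa(n,t)$ for every $s\in[\kappa(n,t),t]$, subtracting \eqref{scheme} evaluated at $\kappa(n,t)$ from \eqref{scheme} evaluated at $t$ gives, almost surely,
\[
x_t^n - x_{\kappa(n,t)}^n = \int_{\kappa(n,t)}^t \tilde{b}^n(s,x_{\kappa(n,s)}^n)\,ds + \int_{\kappa(n,t)}^t \tilde{\sigma}^n(s,x_{\kappa(n,s)}^n)\,dw_s,
\]
where both integrals run over an interval of length at most $1/n$. Applying the elementary inequality $|a+b|^p\leq 2^{p-1}(|a|^p+|b|^p)$ reduces the task to bounding the $p$-th moment of each of the two terms on the right-hand side separately.

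For the drift term I would apply Jensen's (equivalently H\"older's) inequality in the time variable over $[\kappa(n,t),t]$, which produces a factor $n^{-(p-1)}$ and leaves $\int_{\kappa(n,t)}^t \mathbb{E}|\tilde{b}^n(s,x_{\kappa(n,s)}^n)|^p\,ds$. By Corollary \ref{corollary2} the integrand is bounded uniformly in $n$ and $s$, and the interval has length at most $1/n$, so this term is of order $n^{-p}$, which is already smaller than the target $n^{-p/2}$.

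For the diffusion term, when $p\geq 2$ I would use the Burkholder--Davis--Gundy inequality followed by Jensen's inequality applied to $\int_{\kappa(n,t)}^t |\tilde{\sigma}^n(s,x_{\kappa(n,s)}^n)|^2\,ds$: the BDG step yields $C\,\mathbb{E}\big(\int_{\kappa(n,t)}^t |\tilde{\sigma}^n|^2\,ds\big)^{p/2}$, Jensen extracts the factor $n^{-(p/2-1)}$, and $\int_{\kappa(n,t)}^t \mathbb{E}|\tilde{\sigma}^n(s,x_{\kappa(n,s)}^n)|^p\,ds\leq C n^{-1}$ again by Corollary \ref{corollary2}; the product is of order $n^{-p/2}$. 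When $p<2$ I would first use $\mathbb{E}|\cdot|^p\leq(\mathbb{E}|\cdot|^2)^{p/2}$ together with the It\^o isometry and then Corollary \ref{corollary2} with exponent $2$ (admissible since $p_0\geq 2(5\rho+1)\geq 2(2\rho+1)$), obtaining $\mathbb{E}|\int_{\kappa(n,t)}^t \tilde{\sigma}^n\,dw_s|^2\leq C n^{-1}$ and hence again the rate $n^{-p/2}$. Combining the two contributions and taking the supremum over $t\in[0,T]$ completes the argument.

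There is no serious obstacle; the only points requiring a little care are (i) verifying that the moment order actually needed for $\tilde{b}^n$ and $\tilde{\sigma}^n$, namely $\max(p,2)$, does not exceed $p_0/(2\rho+1)$, which holds under the standing hypothesis $p_0\geq 2(5\rho+1)$, and (ii) bookkeeping of the powers of $n$ so that the length-$n^{-1}$ integration interval is exploited correctly in both the Jensen and the BDG steps. Note that it is the stochastic integral, not the Lebesgue integral, that dictates the final rate $n^{-p/2}$.
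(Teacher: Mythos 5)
Your proposal is correct and follows essentially the same route as the paper: split the increment into the drift and diffusion integrals from \eqref{scheme}, apply H\"older/Jensen in time to the drift term and the standard moment inequality for the stochastic integral (BDG plus H\"older, or the $\mathcal{L}^2$ bound for small $p$), then invoke Corollary \ref{corollary2}; the stochastic integral indeed dictates the rate $n^{-p/2}$. The paper's proof is just a condensed version of this, treating $p\geq 1$ directly and reducing $p\in(0,1)$ to that case by Jensen's inequality.
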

\begin{proof} For $p \geq 1$, by using H\"{o}lder's inequality, one obtains
\begin{align*}
\mathbb{E}|x_t^n -x_{\kappa(n,t)}^n |^p 	& \leq C\mathbb{E}\left|\int_{\kappa(n,t)}^t\tilde{b}^n(s, x_{\kappa(n,s)}^n)\,ds\right|^p +C\mathbb{E}\left|\int_{\kappa(n,t)}^t\tilde{\sigma}^n(s, x_{\kappa(n,s)}^n)\,dw_s\right|^p\\
													& \leq n^{-p+1}C\mathbb{E}\int_{\kappa(n,t)}^t |\tilde{b}^n(s, x_{\kappa(n,s)}^n)|^p\,ds+Cn^{-\frac{p}{2}+1}\mathbb{E}\int_{\kappa(n,t)}^t|\tilde{\sigma}^n(s, x_{\kappa(n,s)}^n)|^p\,ds,
\end{align*}
which by using corollary \ref{corollary2} yields the desired result. As for $p \in (0,1)$, one uses Jensen's inequality to obtain the same result.
\end{proof}
\begin{lemma}\label{lemma8}
Assume \ref{a1} to \ref{a5} hold, then, there exists a constant $C>0$, such that  for any \(p\leq \frac{p_0}{2\rho+1}\) and \(n \in \mathbb{N}\),
\[
\sup_{0 \leq t \leq T}\mathbb{E}|b(x_{\kappa(n,t)}^n) -b^n(x_{\kappa(n,t)}^n) |^p \leq Cn^{-\frac{3}{2}p}, \quad \sup_{0 \leq t \leq T}\mathbb{E}|\sigma(x_{\kappa(n,t)}^n) -\sigma^n(x_{\kappa(n,t)}^n) |^p \leq Cn^{-\frac{3}{2}p}
\]
\end{lemma}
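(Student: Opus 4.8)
The plan is to exploit the explicit form of the uniform taming. Since $\theta = 3/2$, for $f \in \{b,\sigma\}$ one has $f^n(x) = f(x)/\bigl(1+n^{-3/2}|x|^{3\rho}\bigr)$, and therefore
\[
f(x) - f^n(x) = f(x)\,\frac{n^{-3/2}|x|^{3\rho}}{1+n^{-3/2}|x|^{3\rho}} .
\]
Using the elementary inequality $a/(1+a) \le a$ with $a = n^{-3/2}|x|^{3\rho} \ge 0$, this gives at once the pointwise bound $|f(x) - f^n(x)| \le n^{-3/2}|x|^{3\rho}\,|f(x)|$. I would then insert the polynomial growth estimates from Remark \ref{remark1}, namely $|b(x)| \le K(1+|x|)^{\rho+1}$ and $|\sigma(x)| \le K(1+|x|)^{\rho/2+1}$, to obtain
\[
|b(x) - b^n(x)| \le C n^{-3/2}(1+|x|)^{4\rho+1}, \qquad |\sigma(x) - \sigma^n(x)| \le C n^{-3/2}(1+|x|)^{\frac{7\rho}{2}+1}.
\]

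Next I would raise these estimates to the power $p$, evaluate at $x = x_{\kappa(n,t)}^n$, take expectations, and invoke the uniform moment bound of Lemma \ref{lemmalp0}. This reduces the statement to the finiteness, uniformly in $n \in \mathbb{N}$ and $t \in [0,T]$, of $\mathbb{E}(1+|x_{\kappa(n,t)}^n|)^{(4\rho+1)p}$ and $\mathbb{E}(1+|x_{\kappa(n,t)}^n|)^{(\frac{7\rho}{2}+1)p}$; the governing exponent being $4\rho+1$, this follows from Lemma \ref{lemmalp0} (together with Lyapunov's inequality to pass to non-integer or intermediate moments) as soon as $(4\rho+1)p \le p_0$, which is covered in the range of $p$ relevant to Theorem \ref{theorem1} — in particular for $p = 2$, since $p_0 \ge 2(5\rho+1)$. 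Taking the supremum over $t \in [0,T]$ then yields the two inequalities with the factor $n^{-3p/2}$. The structure of the argument is thus essentially a one-line computation followed by an application of Lemma \ref{lemmalp0}, parallel to the proof of Lemma \ref{lemma6}.

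The step I would check most carefully — and the only mild subtlety — is the bookkeeping of exponents. Extracting the full rate $n^{-3/2}$ forces one to absorb the entire factor $|x|^{3\rho}$ out of the taming denominator, so the polynomial growth remaining after multiplying by $|b(x)|$ (respectively $|\sigma(x)|$) is $(1+|x|)^{4\rho+1}$ (respectively $(1+|x|)^{7\rho/2+1}$), and one must verify that this still lies within the moment range furnished by Lemma \ref{lemmalp0}. There is no genuine analytic obstacle: the estimate is a direct consequence of the shape of the uniform taming and the a priori $p_0$-th moment bound for the scheme.
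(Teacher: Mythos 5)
Your proposal is correct and follows essentially the same route as the paper: the paper's proof consists of exactly the pointwise identity $|b(x)-b^n(x)| = n^{-3/2}|x|^{3\rho}|b(x)|/(1+n^{-3/2}|x|^{3\rho}) \leq n^{-3/2}(1+|x|)^{4\rho+1}$ followed by an appeal to Lemma \ref{lemmalp0}. Your additional remark on the exponent bookkeeping (that one needs $(4\rho+1)p \leq p_0$, which holds for the values of $p$ actually used, e.g.\ $p=2$ under $p_0 \geq 2(5\rho+1)$) is a point the paper's one-line proof leaves implicit, so your version is if anything slightly more careful.
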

\begin{proof} We have the following expression,
\[
|b(x_{\kappa(n,t)}^n) -b^n(x_{\kappa(n,t)}^n)|= n^{-\frac{3}{2}}\frac{|x_{\kappa(n,t)}^n|^{3\rho}|b(x_{\kappa(n,t)}^n)|}{1+n^{-\frac{3}{2}}|x_{\kappa(n,t)}^n|^{3\rho}} \leq n^{-\frac{3}{2}}(1+|x_{\kappa(n,t)}^n|)^{4\rho+1},
\]
and then by using Lemma \ref{lemmalp0} and the same argument for $\sigma$ completes the proof.
\end{proof}
\begin{lemma}\label{lemma9}
Assume \ref{a1} to \ref{a5} hold and \(p_0 \geq  2(5\rho +1)\). Then, there exists a constant $C>0$, such that for any \(n \in \mathbb{N}\),
\[
\sup_{0\leq t \leq T}\mathbb{E}|\sigma(x_t^n)-\sigma(x_{\kappa(n,t)}^n)-\sigma_M^n(t,x_{\kappa(n,t)}^n)|^2 \leq Cn^{-(2+\beta)}.
\]
\end{lemma}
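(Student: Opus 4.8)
The plan is to derive the bound from an It\^o--Taylor (Wagner--Platen) expansion of $\sigma(x_t^n)$ over the single mesh interval $[\kappa(n,t),t]$. Write $\tau=\kappa(n,t)$ and $y=x_\tau^n$. On $[\tau,t]$ the scheme coefficients are frozen at $y$, so $x_s^n=y+\int_\tau^s\tilde b^n(r,y)\,dr+\int_\tau^s\tilde\sigma^n(r,y)\,dw_r$ is an It\^o process, and It\^o's formula applied componentwise to $\sigma^{(i,j)}(x_s^n)$ expresses $\sigma^{(i,j)}(x_t^n)-\sigma^{(i,j)}(y)$ as the sum of a drift integral with integrand $\sum_u(\partial\sigma^{(i,j)}/\partial x^{(u)})(x_s^n)\,\tilde b^{n,(u)}(s,y)$, an It\^o correction with integrand $\tfrac12\sum_{u,l}(\partial^2\sigma^{(i,j)}/\partial x^{(u)}\partial x^{(l)})(x_s^n)\sum_{j_1}\tilde\sigma^{n,(u,j_1)}(s,y)\tilde\sigma^{n,(l,j_1)}(s,y)$, and a stochastic integral (in $dw_s^{j_1}$, summed over $j_1$) with integrand $\sum_u(\partial\sigma^{(i,j)}/\partial x^{(u)})(x_s^n)\,\tilde\sigma^{n,(u,j_1)}(s,y)$.

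The first step is to isolate the parts of this expansion that reproduce $\sigma_M^n(t,y)=\sigma_1^n(t,y)+\sigma_2^n(t,y)+\sigma_3^n(t,y)$. Freezing $x_s^n$ at $y$ and $\tilde b^n,\tilde\sigma^n$ at $b^n,\sigma^n$ in the two deterministic integrals gives $(t-\tau)L^{n,0}\sigma(y)=\sigma_2^n(t,y)$; the zeroth-order term of the stochastic integral is $\sum_{j_1}\int_\tau^t L^{n,j_1}\sigma(y)\,dw_s^{j_1}=\sigma_1^n(t,y)$; and the first-order terms — the $\sigma_1^n(s,y)$-part of $\tilde\sigma^n(s,y)$, together with the Hessian term obtained by expanding $(\partial\sigma^{(i,j)}/\partial x^{(u)})(x_s^n)$ about $y$ and keeping in $x_s^{n,(l)}-y^{(l)}$ only the leading increment $\sum_{j_2}\sigma^{n,(l,j_2)}(y)(w_s^{j_2}-w_\tau^{j_2})$ — assemble into $\sum_{j_1,j_2}\int_\tau^t\int_\tau^s L^{n,j_2}L^{j_1}\sigma(y)\,dw_r^{j_2}\,dw_s^{j_1}=\sigma_3^n(t,y)$. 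All three matchings are exact up to discrepancies between a single and a double taming denominator ($(1+n^{-3/2}|y|^{3\rho})^{-1}$ versus $(1+n^{-3/2}|y|^{3\rho})^{-2}$), which by Remark \ref{remark1} and $n^{-3/2}|y|^{3\rho}(1+n^{-3/2}|y|^{3\rho})^{-1}\le1$ are of much higher order.

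It then remains to bound a finite list of remainder terms, using Young's and H\"older's inequalities together with Lemma \ref{lemmalp0} ($\sup_{n,t}\mathbb{E}|x_t^n|^{p_0}\le C$), Lemma \ref{lemma6}, Lemma \ref{lemma7} ($\mathbb{E}|x_t^n-x_{\kappa(n,t)}^n|^p\le Cn^{-p/2}$) and Corollary \ref{corollary2}. Apart from the one term discussed next, each of these — the taming discrepancies above, the contributions of $\sigma_2^n(s,y)$ and $\sigma_3^n(s,y)$ inside $\tilde\sigma^n$ and of the higher-order iterated integrals, the error of replacing $x_s^n$ by $y$ and $\tilde b^n,\tilde\sigma^n$ by $b^n,\sigma^n$ in the drift and correction integrands (handled by the local Lipschitz bounds of Remark \ref{remark1} for $\nabla\sigma$, $\nabla^2\sigma$ and Lemma \ref{lemma7}), and the Hessian term paired with the non-leading part $x_s^{n,(l)}-y^{(l)}-\sum_{j_2}\sigma^{n,(l,j_2)}(y)(w_s^{j_2}-w_\tau^{j_2})$ of the increment (which is $O(n^{-1})$ in every $\mathcal L^p$) — contributes at most $O(n^{-3})$ to $\sup_t\mathbb{E}|\,\cdot\,|^2$, hence $\le Cn^{-(2+\beta)}$ since $\beta\le1$.

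The crucial term, and the only one of the exact stated order, is the one carrying the first-order Taylor remainder of $\nabla\sigma$ against the leading diffusion coefficient,
\[
\sum_{j_1}\int_\tau^t\sum_u\left(\frac{\partial\sigma^{(i,j)}}{\partial x^{(u)}}(x_s^n)-\frac{\partial\sigma^{(i,j)}}{\partial x^{(u)}}(y)-\sum_l\frac{\partial^2\sigma^{(i,j)}}{\partial x^{(u)}\partial x^{(l)}}(y)\,(x_s^{n,(l)}-y^{(l)})\right)\sigma^{n,(u,j_1)}(y)\,dw_s^{j_1}.
\]
Here I would invoke Lemma \ref{mvt} — applicable by Assumption \ref{a5} with $\alpha=\tfrac{\rho-4}{2}$ — to bound the bracketed factor by $C(1+|x_s^n|+|y|)^{(\rho-4)/2}|x_s^n-y|^{1+\beta}$; combined with $|\sigma^n(y)|\le C(1+|y|)^{\rho/2+1}$ and It\^o's isometry this yields $\mathbb{E}|\,\cdot\,|^2\le Cn^{-1}\sup_s\mathbb{E}\big[(1+|x_s^n|+|y|)^{2\rho-2}|x_s^n-y|^{2(1+\beta)}\big]$; and a final H\"older split with conjugate exponents $q,q'$, using Lemma \ref{lemma7} for $\mathbb{E}|x_s^n-x_{\kappa(n,s)}^n|^{2(1+\beta)q}\le Cn^{-(1+\beta)q}$ and Lemma \ref{lemmalp0} for $\mathbb{E}(1+|x_s^n|+|y|)^{(2\rho-2)q'}\le C$, gives $\mathbb{E}|\,\cdot\,|^2\le Cn^{-(2+\beta)}$. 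The main obstacle is precisely the feasibility of this split: it requires $2(1+\beta)q\le p_0/(2\rho+1)$ and $(2\rho-2)q'\le p_0$ simultaneously, and for the worst case $\beta=1$ a feasible conjugate pair exists only when $p_0\ge2(5\rho+1)$ (with $q'=\tfrac{5\rho+1}{\rho-1}$ at equality), which is exactly the hypothesis in the statement. Summing the finitely many component and remainder bounds over $i,j$ and taking the supremum over $t\in[0,T]$ finishes the argument.
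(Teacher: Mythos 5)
Your proposal follows essentially the same route as the paper: the same It\^o--Taylor expansion over the mesh interval, the same identification of $\sigma_1^n,\sigma_2^n,\sigma_3^n$ inside the expansion up to higher-order taming discrepancies, and the same treatment of the critical term via Lemma \ref{mvt}, with the same feasibility computation forcing $p_0\ge 2(5\rho+1)$. One correction is needed, though: the error from replacing $\nabla^2\sigma(x_s^n)$ by $\nabla^2\sigma(x_{\kappa(n,s)}^n)$ in the It\^o correction integrand (the paper's $J_{10}$) is \emph{not} $O(n^{-3})$ and cannot be handled by a ``local Lipschitz bound of Remark \ref{remark1} for $\nabla^2\sigma$'' --- Remark \ref{remark1} gives only a growth bound on the Hessian, and under \ref{a5} the Hessian is merely $\beta$-H\"older. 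Using \ref{a5} directly, this term contributes exactly $Cn^{-(2+\beta)}$ (the factor $|x_s^n-x_{\kappa(n,s)}^n|^{2\beta}$ yields $n^{-\beta}$ on top of the $n^{-1}\cdot n^{-1}$ from Cauchy--Schwarz and the interval length), so it is a second term of the stated leading order rather than a negligible one; the final bound is unaffected.
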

\begin{proof}
For every \(k = 1, \dots, d,\) \(v = 1, \dots, m\), applying It\^o's formula to \(\sigma^{(k,v)}(x_t^n)-\sigma^{(k,v)}(x_{\kappa(n,t)}^n)\) gives, almost surely,
\begin{align*}
&\sigma^{(k,v)}(x_t^n)-\sigma^{(k,v)}(x_{\kappa(n,t)}^n)\\
&=\,	  \sum_{i = 1}^d \int_{\kappa(n,t)}^t \frac{\partial \sigma^{(k,v)}(x_s^n)}{\partial x^{(i)}}\tilde{b}^{n,(i)}(s, x_{\kappa(n,s)}^n)\,ds  + \sum_{i = 1}^d \sum_{j = 1}^m \int_{\kappa(n,t)}^t \frac{\partial \sigma^{(k,v)}(x_s^n)}{\partial x^{(i)}}\tilde{\sigma}^{n,(i,j)}(s, x_{\kappa(n,s)}^n)\,dw_s^j\\
																		&\hspace{1em}+ \frac{1}{2}\sum_{i,l = 1}^d \sum_{j = 1}^m\int_{\kappa(n,t)}^t \frac{\partial^2 \sigma^{(k,v)}(x_s^n)}{\partial x^{(i)} \partial x^{(l)}}\tilde{\sigma}^{n,(i,j)}(s, x_{\kappa(n,s)}^n)\tilde{\sigma}^{n,(l,j)}(s, x_{\kappa(n,s)}^n)\,ds =\,	 \sum_{i=1}^{12}	J_i(t)										
\end{align*}	
where													
\begin{align*}																		
J_1(t)& =  \sum_{i = 1}^d \int_{\kappa(n,t)}^t\left( \frac{\partial \sigma^{(k,v)}(x_s^n)}{\partial x^{(i)}}-\frac{\partial \sigma^{(k,v)}(x_{\kappa(n,s)}^n)}{\partial x^{(i)}}\right)b^{n,(i)}( x_{\kappa(n,s)}^n)\,ds,\\
J_2(t)&= \sum_{i = 1}^d \int_{\kappa(n,t)}^t \frac{\partial \sigma^{(k,v)}(x_{\kappa(n,s)}^n)}{\partial x^{(i)}}b^{n,(i)}( x_{\kappa(n,s)}^n)\,ds,\\
J_3(t)&=\sum_{i = 1}^d \int_{\kappa(n,t)}^t \frac{\partial \sigma^{(k,v)}(x_s^n)}{\partial x^{(i)}}(b_1^{n,(i)}(s, x_{\kappa(n,s)}^n)+b_2^{n,(i)}(s, x_{\kappa(n,s)}^n))\,ds,\\
J_4(t)&= \sum_{i = 1}^d \sum_{j = 1}^m \int_{\kappa(n,t)}^t \left( \frac{\partial \sigma^{(k,v)}(x_s^n)}{\partial x^{(i)}}-\frac{\partial \sigma^{(k,v)}(x_{\kappa(n,s)}^n)}{\partial x^{(i)}} \right. \\
																			&\hspace{11em} \left. -\sum_{l=1}^d\frac{\partial^2 \sigma^{(k,v)}(x_{\kappa(n,s)}^n)}{\partial x^{(i)} \partial x^{(l)}}(x_s^{n,(l)}-x_{\kappa(n,s)}^{n,(l)})\right) \sigma^{n,(i,j)}(x_{\kappa(n,s)}^n)\,dw_s^j,\\
J_5(t)&= \sum_{i = 1}^d \sum_{j = 1}^m \int_{\kappa(n,t)}^t \sum_{l=1}^d\frac{\partial^2 \sigma^{(k,v)}(x_{\kappa(n,s)}^n)}{\partial x^{(i)} \partial x^{(l)}}\left(\int_{\kappa(n,s)}^s \tilde{b}^{n,(l)}(r, x_{\kappa(n,r)}^n)\,dr\right. \\
																			&  \hspace{11em} +\left. \sum_{j_1=1}^m\int_{\kappa(n,s)}^s \sigma_M^{n,(l,j_1)}(r, x_{\kappa(n,r)}^n)\,dw_r^{j_1}\right)\sigma^{n,(i,j)}(x_{\kappa(n,s)}^n)\,dw_s^j,\\
J_6(t)&= \sum_{i = 1}^d \sum_{j = 1}^m \int_{\kappa(n,t)}^t \sum_{l=1}^d\frac{\partial^2 \sigma^{(k,v)}(x_{\kappa(n,s)}^n)}{\partial x^{(i)} \partial x^{(l)}} \sum_{j_1=1}^m\int_{\kappa(n,s)}^s \sigma^{n,(l,j_1)}(x_{\kappa(n,r)}^n)\,dw_r^{j_1}\sigma^{n,(i,j)}(x_{\kappa(n,s)}^n)\,dw_s^j,\\
J_7(t)&= \sum_{i = 1}^d \sum_{j = 1}^m \int_{\kappa(n,t)}^t\left( \frac{\partial \sigma^{(k,v)}(x_s^n)}{\partial x^{(i)}}-\frac{\partial \sigma^{(k,v)}(x_{\kappa(n,s)}^n)}{\partial x^{(i)}}\right)\sigma_1^{n,(i,j)}(s, x_{\kappa(n,s)}^n)\,dw_s^{j}	,\\
J_8(t)&= \sum_{i = 1}^d \sum_{j = 1}^m \int_{\kappa(n,t)}^t\frac{\partial \sigma^{(k,v)}(x_{\kappa(n,s)}^n)}{\partial x^{(i)}}(\sigma^{n,(i,j)}(x_{\kappa(n,s)}^n)+\sigma_1^{n,(i,j)}(s, x_{\kappa(n,s)}^n))\,dw_s^j,\\			
J_9(t)&= \sum_{i = 1}^d \sum_{j = 1}^m \int_{\kappa(n,t)}^t\frac{\partial \sigma^{(k,v)}(x_s^n)}{\partial x^{(i)}}(\sigma_2^{n,(i,j)}(s,x_{\kappa(n,s)}^n)+\sigma_3^{n,(i,j)}(s, x_{\kappa(n,s)}^n))\,dw_s^j,\\
J_{10}(t)&= \frac{1}{2}\sum_{i,l = 1}^d \sum_{j = 1}^m\int_{\kappa(n,t)}^t \left(\frac{\partial^2 \sigma^{(k,v)}(x_s^n)}{\partial x^{(i)} \partial x^{(l)}}-\frac{\partial^2 \sigma^{(k,v)}(x_{\kappa(n,s)}^n)}{\partial x^{(i)} \partial x^{(l)}}\right)\sigma^{n,(i,j)}( x_{\kappa(n,s)}^n)\sigma^{n,(l,j)}( x_{\kappa(n,s)}^n)\,ds,\\	
J_{11}(t)&= \frac{1}{2}\sum_{i,l = 1}^d \sum_{j = 1}^m\int_{\kappa(n,t)}^t \frac{\partial^2 \sigma^{(k,v)}(x_{\kappa(n,s)}^n)}{\partial x^{(i)} \partial x^{(l)}}\sigma^{n,(i,j)}( x_{\kappa(n,s)}^n)\sigma^{n,(l,j)}( x_{\kappa(n,s)}^n)\,ds,\\		
J_{12}(t)&= \frac{1}{2}\sum_{i,l = 1}^d \sum_{j = 1}^m\int_{\kappa(n,t)}^t \frac{\partial^2 \sigma^{(k,v)}(x_s^n)}{\partial x^{(i)} \partial x^{(l)}}(\sigma^{n,(i,j)}( x_{\kappa(n,s)}^n)\sigma_M^{n,(l,j)}(s, x_{\kappa(n,s)}^n)\\
																			&\hspace{15em} +\sigma_M^{n,(i,j)}(s, x_{\kappa(n,s)}^n)\tilde{\sigma}^{n,(l,j)}(s, x_{\kappa(n,s)}^n))\,ds.
\end{align*}
It can be observed that
\begin{align*}
&\mathbb{E}|J_2(t)+J_6(t)+J_8(t)+J_{11}(t) -\sigma_M^{n,(k,v)}(t, x_{\kappa(n,t)}^n)|^2\\
&\leq 2\mathbb{E}|J_2(t)+J_{11}(t) -\sigma_2^{n,(k,v)}(t, x_{\kappa(n,t)}^n)|^2+2\mathbb{E}|J_6(t)+J_8(t) -\sigma_1^{n,(k,v)}(t, x_{\kappa(n,t)}^n)-\sigma_3^{n,(k,v)}(t, x_{\kappa(n,t)}^n)|^2\\
&\leq C\sum_{i,l = 1}^d \sum_{j= 1}^m\mathbb{E}\left|-\frac{n^{-3/2}|x_{\kappa(n,t)}^n|^{3\rho}}{(1+n^{-3/2}|x_{\kappa(n,t)}^n|^{3\rho})^2}\int_{\kappa(n,t)}^t \frac{\partial^2 \sigma^{(k,v)}(x_{\kappa(n,s)}^n)}{\partial x^{(i)} \partial x^{(l)}}\sigma^{(i,j)}( x_{\kappa(n,s)}^n)\sigma^{(l,j)}( x_{\kappa(n,s)}^n)\,ds\right|^2\\
&\hspace{1em}+2\mathbb{E}\left|-\frac{n^{-3/2}|x_{\kappa(n,t)}^n|^{3\rho}}{(1+n^{-3/2}|x_{\kappa(n,t)}^n|^{3\rho})^2}\sum_{i,l = 1}^d \sum_{j, j_1= 1}^m \int_{\kappa(n,t)}^t \frac{\partial^2 \sigma^{(k,v)}(x_{\kappa(n,s)}^n)}{\partial x^{(i)} \partial x^{(l)}} \right.\\
&\hspace{10em}\left. \times \int_{\kappa(n,s)}^s \sigma^{(l,j_1)}(x_{\kappa(n,r)}^n)\,dw_r^{j_1}\sigma^{(i,j)}(x_{\kappa(n,s)}^n)\,dw_s^j\right|^2,
\end{align*}
which implies due to Remark \ref{remark1} and Lemma \ref{lemmalp0} that
\begin{align*}
&\mathbb{E}|J_2(t)+J_6(t)+J_8(t)+J_{11}(t) -\sigma_M^{n,(k,v)}(t, x_{\kappa(n,t)}^n)|^2\\
&  \leq 	Cn^{-3}\mathbb{E}|n^{-1}|x_{\kappa(n,t)}^n|^{3\rho}(1+|x_{\kappa(n,t)}^n|^{3/2\rho+1})|^2 +Cn^{-5}\mathbb{E}||x_{\kappa(n,t)}^n|^{3\rho}(1+|x_{\kappa(n,t)}^n|^{3/2\rho+1})|^2\leq Cn^{-5},
\end{align*}
for \(p_0 \geq 9\rho +2\). 
Then, one obtains the following
\begin{align*}
&\mathbb{E}|\sigma^{(k,v)}(x_t^n)-\sigma^{(k,v)}(x_{\kappa(n,t)}^n)-\sigma_M^{n,(k,v)}(t,x_{\kappa(n,t)}^n)|^2 \\
& \hspace{1em} \leq 2\mathbb{E}|J_1(t)+J_3(t)+J_4(t)+J_5(t)+J_7(t)+J_9(t)+J_{10}(t)+J_{12}(t)|^2\\
& \hspace{2em} +2\mathbb{E}|J_2(t)+J_6(t)+J_8(t)+J_{11}(t) -\sigma_M^{n,(k,v)}(t, x_{\kappa(n,t)}^n)|^2\\
&  \hspace{1em} \leq C(\mathbb{E}|J_1(t)|^2+\mathbb{E}|J_3(t)|^2+\mathbb{E}|J_4(t)|^2+\mathbb{E}|J_5(t)|^2+\mathbb{E}|J_7(t)|^2\\
& \hspace{2em}+\mathbb{E}|J_9(t)|^2+\mathbb{E}|J_{10}(t)|^2+\mathbb{E}|J_{12}(t)|^2)+Cn^{-5},
\end{align*}
for any \(t \in [0,T]\). By using Cauchy-Schwarz inequality, \(\mathbb{E}|J_1(t)|^2\) can be estimated as
\begin{align*}
\mathbb{E}|J_1(t)|^2		& \leq Cn^{-1}\sum_{i = 1}^d \int_{\kappa(n,t)}^t\mathbb{E}\left|\left( \frac{\partial \sigma^{(k,v)}(x_s^n)}{\partial x^{(i)}}-\frac{\partial \sigma^{(k,v)}(x_{\kappa(n,s)}^n)}{\partial x^{(i)}}\right)b^{n}( x_{\kappa(n,s)}^n)\right|^2\,ds,
\end{align*}
which by using Young's inequality, Remark \ref{remark1} and H\"{o}lder's inequality yields
\begin{align*}
\mathbb{E}|J_1(t)|^2		&\leq  Cn^{-1}\int_{\kappa(n,t)}^t\mathbb{E}(1+|x_s^n|+|x_{\kappa(n,s)}^n|)^{\rho-2}(1+|x_{\kappa(n,s)}^n|)^{2\rho+2}|x_s^n-x_{\kappa(n,s)}^n|^2\,ds\\
					&\leq  Cn^{-1}\int_{\kappa(n,t)}^t\mathbb{E}(1+|x_s^n|^{3\rho}+|x_{\kappa(n,s)}^n|^{3\rho})|x_s^n-x_{\kappa(n,s)}^n|^2\,ds\\
					& \leq  Cn^{-1}\int_{\kappa(n,t)}^t\left(\mathbb{E}(1+|x_s^n|^{p_0}+|x_{\kappa(n,s)}^n|^{p_0})\right)^{\frac{3\rho}{p_0}}\left(\mathbb{E}|x_s^n-x_{\kappa(n,s)}^n|^{\frac{2p_0}{p_0-3\rho}}\right)^{\frac{p_0-3\rho}{p_0}}\,ds,
\end{align*}
for any \(t \in [0,T]\). One uses Lemma \ref{lemmalp0} and Lemma \ref{lemma7} to obtain
\[
\mathbb{E}|J_1(t)|^2	\leq Cn^{-3},
\]
for every \(n \in \mathbb{N}\). To estimate \(\mathbb{E}|J_3(t)|^2\), one applies Cauchy-Schwarz inequality and Remark \ref{remark1} to obtain
\begin{align*}
\mathbb{E}|J_3(t)|^2		& \leq Cn^{-1}\int_{\kappa(n,t)}^t\mathbb{E} (1+|x_s^n|)^{\rho}(|b_1^{n}(s, x_{\kappa(n,s)}^n)|^2+|b_2^{n}(s, x_{\kappa(n,s)}^n)|^2)\,ds,
\end{align*}
which implies due to H\"{o}lder's inequality
\begin{align*}
\mathbb{E}|J_3(t)|^2		& \leq Cn^{-1}\int_{\kappa(n,t)}^t\left(\mathbb{E}(1+|x_s^n|^{p_0})\right)^{\frac{\rho}{p_0}}(\mathbb{E}(|b_1^{n}(s, x_{\kappa(n,s)}^n)|^{\frac{2p_0}{p_0-\rho}}+|b_2^{n}(s, x_{\kappa(n,s)}^n)|^{\frac{2p_0}{p_0-\rho}}))^{\frac{p_0-\rho}{p_0}}\,ds,
\end{align*}
 for any \(t \in [0,T]\). By Lemma \ref{lemmalp0} and Lemma \ref{lemma6}, it becomes
 \[
 \mathbb{E}|J_3(t)|^2		\leq Cn^{-3},
 \]
  for every \(n \in \mathbb{N}\). As for \(\mathbb{E}|J_4(t)|^2\), by using Young's inequality, Cauchy-Schwarz inequality, Remark \ref{remark1} and Lemma \ref{mvt}, one obtains
\begin{align*}
\mathbb{E}|J_4(t)|^2	&\leq C\int_{\kappa(n,t)}^t \mathbb{E}((1+|x_s^n|+|x_{\kappa(n,s)}^n|)^{\rho-4}(1+|x_{\kappa(n,s)}^n|)^{\rho+2}|x_s^n-x_{\kappa(n,s)}^n|^{2+2\beta}\,ds\\
				&\leq C\int_{\kappa(n,t)}^t \mathbb{E}((1+|x_s^n|+|x_{\kappa(n,s)}^n|)^{2\rho-2}|x_s^n-x_{\kappa(n,s)}^n|^{2+2\beta})\,ds,
\end{align*}
which implies due to H\"{o}lder's inequality
\begin{align*}
\mathbb{E}|J_4(t)|^2	&\leq C\int_{\kappa(n,t)}^t \left(\mathbb{E}(1+|x_s^n|^{p_0}+|x_{\kappa(n,s)}^n|^{p_0})\right)^{\frac{2\rho-2}{p_0}}\left(\mathbb{E}|x_s^n-x_{\kappa(n,s)}^n|^{\frac{(2+2\beta)p_0}{p_0-2\rho+2}}\right)^{\frac{p_0-2\rho+2}{p_0}}\,ds,
\end{align*}
 for any \(t \in [0,T]\).  Then, applying Lemma \ref{lemma7} and Lemma \ref{lemmalp0} yield
\[
 \mathbb{E}|J_4(t)|^2		\leq Cn^{-(2+\beta)},
 \]
 for every \(n \in \mathbb{N}\).
 In order to estimate \(\mathbb{E}|J_5(t)|^2\),  one uses Young's inequality and Cauchy-Schwarz inequality to obtain
 \begin{align*}
\mathbb{E}|J_5(t)|^2 & \leq C\int_{\kappa(n,t)}^t \mathbb{E}\left|\int_{\kappa(n,s)}^s |\tilde{b}^{n}(r, x_{\kappa(n,r)}^n)|\,dr +\left|\sum_{j_1=1}^m\int_{\kappa(n,s)}^s \sigma_M^{n,(i,j_1)}(r, x_{\kappa(n,r)}^n)\,dw_r^{j_1}\right|\right|^2\\
				&\hspace{11em} \times (1+|x_{\kappa(n,s)}^n|)^{2\rho}\,ds,
\end{align*}
 which, by applying H\"{o}lder's inequality, yields 
 \begin{align*}
\mathbb{E}|J_5(t)|^2	& \leq C\int_{\kappa(n,t)}^t \left(n^{-\frac{2p_0}{p_0-2\rho}+1}\int_{\kappa(n,s)}^s \mathbb{E}|\tilde{b}^{n}(r, x_{\kappa(n,r)}^n)|^{\frac{2p_0}{p_0-2\rho}}\,ds\right.\\
				&\hspace{1em}\left.+n^{-\frac{p_0}{p_0-2\rho}+1}\int_{\kappa(n,s)}^s\mathbb{E}|\sigma_M^{n}(r, x_{\kappa(n,r)}^n)|^{\frac{2p_0}{p_0-2\rho}}\,ds\right)^{\frac{p_0-2\rho}{p_0}}\left(\mathbb{E}(1+|x_{\kappa(n,s)}^n|^{p_0})\right)^{\frac{2\rho}{p_0}}\,ds,
\end{align*}
for any \(t \in [0,T]\). One uses Corollary \ref{corollary2} and Lemma \ref{lemma6} to obtain
\[
 \mathbb{E}|J_5(t)|^2		\leq Cn^{-3},
 \]
 for every \(n \in \mathbb{N}\). As for \(\mathbb{E}|J_7(t)|^2\), it can be estimated by using Cauchy-Schwarz inequality as follows
\begin{align*}
\mathbb{E}|J_7(t)|^2	& \leq C\sum_{i = 1}^d \sum_{j = 1}^m \int_{\kappa(n,t)}^t\mathbb{E}\left|\frac{\partial \sigma^{(k,v)}(x_s^n)}{\partial x^{(i)}}-\frac{\partial \sigma^{(k,v)}(x_{\kappa(n,s)}^n)}{\partial x^{(i)}}\right|^2|\sigma_1^{n}(s, x_{\kappa(n,s)}^n)|^2\,ds,
\end{align*}
 which yields by using Remark \ref{remark1} and H\"{o}lder's inequality
\begin{align*}
\mathbb{E}|J_7(t)|^2	& \leq C \int_{\kappa(n,t)}^t\mathbb{E}(1+|x_s^n|+|x_{\kappa(n,s)}^n|)^{\rho-2}|x_s^n - x_{\kappa(n,s)}^n|^2|\sigma_1^{n}(s, x_{\kappa(n,s)}^n)|^2\,ds\\
				& \leq C\int_{\kappa(n,t)}^t\left(\mathbb{E}(1+|x_s^n|+|x_{\kappa(n,s)}^n|)^{p_0}\right)^{\frac{\rho-2}{p_0}}\\
				&\hspace{5em} \times \left(\mathbb{E}|x_s^n-x_{\kappa(n,s)}^n)|^{\frac{2p_0}{p_0-\rho+2}}|\sigma_1^{n,}(s, x_{\kappa(n,s)}^n)|^{\frac{2p_0}{p_0-\rho+2}}\right)^{\frac{p_0-\rho+2}{p_0}}\,ds,
\end{align*}
for \(\rho >2\), and any \(t \in [0,T]\). Then, one can apply Cauchy-Schwarz inequality and Lemma \ref{lemmalp0} to obtain
\begin{align*}
\mathbb{E}|J_7(t)|^2	& \leq C\int_{\kappa(n,t)}^t \left(\mathbb{E}|x_s^n-x_{\kappa(n,s)}^n)|^{\frac{4p_0}{p_0-\rho+2}}\mathbb{E}|\sigma_1^{n,}(s, x_{\kappa(n,s)}^n)|^{\frac{4p_0}{p_0-\rho+2}}\right)^{\frac{p_0-\rho+2}{2p_0}}\,ds,
\end{align*}
Thus, applying  Lemma \ref{lemma7} and Lemma \ref{lemma6} give the following estimate
\[
 \mathbb{E}|J_7(t)|^2		\leq Cn^{-3},
 \]
 for every \(n \in \mathbb{N}\). Note that, for the case that \(\rho=2\), one obtains the same result immediately by using Cauchy-Schwarz inequality. As for \(\mathbb{E}|J_9(t)|^2\), applying Remark \ref{remark1} yields
\begin{align*}
\mathbb{E}|J_9(t)|^2	 & \leq C\int_{\kappa(n,t)}^t\mathbb{E}(1+|x_s^n|)^{\rho}(|\sigma_2^{n}(s,x_{\kappa(n,s)}^n)|^2+|\sigma_3^{n}(s, x_{\kappa(n,s)}^n)|^2)\,ds
\end{align*}
which by applying H\"{o}lder's inequality gives
\begin{align*}
\mathbb{E}|J_9(t)|^2	& \leq C\int_{\kappa(n,t)}^t\left(\mathbb{E}(1+|x_s^n|^{p_0})\right)^{\frac{\rho}{p_0}}(\mathbb{E}(|\sigma_2^{n}(s,x_{\kappa(n,s)}^n)|^{\frac{2p_0}{p_0-\rho}}+|\sigma_3^{n}(s, x_{\kappa(n,s)}^n)|^{\frac{2p_0}{p_0-\rho}}))^{\frac{p_0-\rho}{p_0}}\,ds,
\end{align*}
for any \(t \in [0,T]\). By Lemma \ref{lemma6}, one obtains
\[
 \mathbb{E}|J_9(t)|^2		\leq Cn^{-3},
 \]
 for every \(n \in \mathbb{N}\). To estimate \(\mathbb{E}|J_{10}(t)|^2\), one uses Young's inequality and Remark \ref{remark1} to obtain
\begin{align*}
\mathbb{E}|J_{10}(t)|^2	& \leq Cn^{-1}\int_{\kappa(n,t)}^t\mathbb{E}(1+|x_s^n|+|x_{\kappa(n,s)}^n|)^{3\rho}|x_s^n - x_{\kappa(n,s)}^n|^{2\beta}\,ds
\end{align*}
which implies due to H\"{o}lder's inequality
\begin{align*}
\mathbb{E}|J_{10}(t)|^2	& \leq Cn^{-1}\int_{\kappa(n,t)}^t\left(\mathbb{E}(1+|x_{\kappa(n,s)}^n|)^{p_0}\right)^{\frac{3\rho}{p_0}}\left(\mathbb{E}|x_s^n - x_{\kappa(n,s)}^n|^{\frac{2\beta p_0}{p_0-3\rho}}\right)^{\frac{p_0-3\rho}{p_0}}\,ds,
\end{align*}
for any \(t \in [0,T]\). Lemma \ref{lemma7} is used to obtain
\[
 \mathbb{E}|J_{10}(t)|^2		\leq Cn^{-(2+\beta)},
 \]
 for every \(n \in \mathbb{N}\). Finally for \(\mathbb{E}|J_{12}(t)|^2\), applying Young's inequality, Cauchy-Schwarz inequality and Remark \ref{remark1} yield
\begin{align*}
\mathbb{E}|J_{12}(t)|^2	& \leq Cn^{-1}\int_{\kappa(n,t)}^t\mathbb{E}(1+|x_s^n|)+|x_{\kappa(n,s)}^n|)^{2\rho}|\sigma_M^{n}(s, x_{\kappa(n,s)}^n)|^2\,ds \\
					& \hspace{1em} + Cn^{-1}\int_{\kappa(n,t)}^t\mathbb{E}(1+|x_s^n|)^{\rho-2}|\tilde{\sigma}^{n}( x_{\kappa(n,s)}^n)|^2|\sigma_M^{n}(s, x_{\kappa(n,s)}^n)|^2\,ds,
\end{align*}
which implies due to H\"{o}lder's inequality
\begin{align*}
\mathbb{E}|J_{12}(t)|^2	& \leq Cn^{-1}\int_{\kappa(n,t)}^t\left(\mathbb{E}(1+|x_s^n|^{p_0}+|x_{\kappa(n,s)}^n|^{p_0})\right)^{\frac{2\rho}{p_0}}\left(\mathbb{E}|\sigma_M^{n}(s, x_{\kappa(n,s)}^n)|^{\frac{2p_0}{p_0-2\rho}}\right)^{\frac{p_0-2\rho}{p_0}}\,ds\\
					& \hspace{1em} +Cn^{-1}\int_{\kappa(n,t)}^t\left(\mathbb{E}(1+|x_s^n|)^{p_0}\right)^{\frac{\rho-2}{p_0}}\left(\mathbb{E}|\tilde{\sigma}^{n}( x_{\kappa(n,s)}^n)|^{\frac{2p_0}{p_0-\rho+2}}|\sigma_M^{n}( x_{\kappa(n,s)}^n)|^{\frac{2p_0}{p_0-\rho+2}}\right)^{\frac{p_0-\rho+2}{p_0}}\,ds,
\end{align*}
for any \(t \in [0,T]\). Applying Lemma \ref{lemmalp0} and Lemma \ref{lemma6} to the first term and Cauchy-Schwarz inequality to the second term give
\begin{align*}
\mathbb{E}|J_{12}(t)|^2	& \leq Cn^{-3}+Cn^{-1}\int_{\kappa(n,t)}^t\left(\mathbb{E}|\tilde{\sigma}^{n}( x_{\kappa(n,s)}^n)|^{\frac{4p_0}{p_0-\rho+2}}\mathbb{E}|\sigma_M^{n}( x_{\kappa(n,s)}^n)|^{\frac{4p_0}{p_0-\rho+2}}\right)^{\frac{p_0-\rho+2}{2p_0}}\,ds,
\end{align*}
which by using Lemma \ref{lemma6} yields the desired result, i.e.
\[
 \mathbb{E}|J_{12}(t)|^2		\leq Cn^{-3},
 \]
 for every \(n \in \mathbb{N}\). Therefore, one obtains, for any \(n \in \mathbb{N}\), \(\beta \in (0,1]\) and \(p_0 \geq 10\rho+2\),
 \[
 \sup_{0\leq t \leq T}\mathbb{E}|\sigma(x_t^n)-\sigma(x_{\kappa(n,t)}^n)-\sigma_M^n(t,x_{\kappa(n,t)}^n)|^2 \leq Cn^{-(2+\beta)}+Cn^{-3}+Cn^{-5} \leq Cn^{-(2+\beta)}.
 \]
\end{proof}
\begin{lemma}\label{lemma10}
Assume \ref{a1} to \ref{a5} hold and \(p_0 \geq 2(5\rho +1)\). Then, there exists a constant $C>0$, such that for any \(n \in \mathbb{N}\),
\[
\sup_{0\leq t \leq T}\mathbb{E}|b(x_t^n)-b(x_{\kappa(n,t)}^n)-b_1^n(t,x_{\kappa(n,t)}^n)-b_2^n(t,x_{\kappa(n,t)}^n)|^2 \leq Cn^{-2}.
\]
\end{lemma}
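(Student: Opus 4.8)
\section*{Proof proposal for Lemma~\ref{lemma10}}

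The plan is to follow the blueprint of the proof of Lemma~\ref{lemma9}, but to exploit the fact that the target rate here is $n^{-2}$ in $\mathcal{L}^2$ (as opposed to $n^{-(2+\beta)}$ for $\sigma$), which is coarse enough that the second-order Taylor correction of Lemma~\ref{mvt} is \emph{not} needed; it suffices to use the first-order bounds from Remark~\ref{remark1} together with \ref{a4}. Fix $i\in\{1,\dots,d\}$ and apply It\^o's formula to $b^{(i)}(x^n_t)-b^{(i)}(x^n_{\kappa(n,t)})$ on $[\kappa(n,t),t]$; this produces a drift term $\sum_u\int_{\kappa(n,t)}^t \tfrac{\partial b^{(i)}(x^n_s)}{\partial x^{(u)}}\tilde b^{n,(u)}(s,x^n_{\kappa(n,s)})\,ds$, a martingale term $\sum_{u,j}\int_{\kappa(n,t)}^t \tfrac{\partial b^{(i)}(x^n_s)}{\partial x^{(u)}}\tilde\sigma^{n,(u,j)}(s,x^n_{\kappa(n,s)})\,dw^j_s$, and an It\^o-correction term $\tfrac12\sum_{u,l,j}\int_{\kappa(n,t)}^t \tfrac{\partial^2 b^{(i)}(x^n_s)}{\partial x^{(u)}\partial x^{(l)}}\tilde\sigma^{n,(u,j)}\tilde\sigma^{n,(l,j)}(s,x^n_{\kappa(n,s)})\,ds$. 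In each of these I would freeze the outer argument at $x^n_{\kappa(n,s)}$ and replace $\tilde b^n,\tilde\sigma^n$ by $b^n,\sigma^n$, keeping track of the remainders.

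The frozen ``main'' terms reconstruct $b_1^n+b_2^n$ almost exactly. Since $L^{n,j}b^{(i)}(x)=\sum_u \tfrac{\partial b^{(i)}(x)}{\partial x^{(u)}}\sigma^{n,(u,j)}(x)$ (the same taming denominator appears on both sides), the frozen martingale term $\sum_{u,j}\int_{\kappa(n,t)}^t \tfrac{\partial b^{(i)}(x^n_{\kappa(n,s)})}{\partial x^{(u)}}\sigma^{n,(u,j)}(x^n_{\kappa(n,s)})\,dw^j_s$ equals $b_2^{n,(i)}(t,x^n_{\kappa(n,t)})$ identically. Likewise the frozen drift term plus the frozen It\^o-correction term equal $b_1^{n,(i)}(t,x^n_{\kappa(n,t)})$ up to the single taming-discrepancy term
\[
-\,\frac{n^{-3/2}|x^n_{\kappa(n,t)}|^{3\rho}}{\bigl(1+n^{-3/2}|x^n_{\kappa(n,t)}|^{3\rho}\bigr)^2}\,\bigl(t-\kappa(n,t)\bigr)\,\frac12\sum_{u,l,j}\frac{\partial^2 b^{(i)}(x^n_{\kappa(n,t)})}{\partial x^{(u)}\partial x^{(l)}}\sigma^{(u,j)}(x^n_{\kappa(n,t)})\sigma^{(l,j)}(x^n_{\kappa(n,t)}),
\]
which arises because the squared diffusion in the It\^o correction carries $(1+n^{-3/2}|\cdot|^{3\rho})^{-2}$ whereas $L^{n,0}b$ carries $(1+n^{-3/2}|\cdot|^{3\rho})^{-1}$. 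By Remark~\ref{remark1} this term is bounded in absolute value by $Cn^{-5/2}(1+|x^n_{\kappa(n,t)}|)^{5\rho+1}$, so by Lemma~\ref{lemmalp0} its $\mathcal{L}^2$-norm squared is $O(n^{-5})$ --- and it is precisely here that the hypothesis $p_0\ge 2(5\rho+1)$ is used, exactly as $p_0\ge 9\rho+2$ is used for the corresponding term in Lemma~\ref{lemma9}.

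It remains to estimate the remainders, of which there are four families: (i) $\sum_u\int_{\kappa(n,t)}^t\bigl(\tfrac{\partial b^{(i)}(x^n_s)}{\partial x^{(u)}}-\tfrac{\partial b^{(i)}(x^n_{\kappa(n,s)})}{\partial x^{(u)}}\bigr)\tilde b^{n,(u)}\,ds$ and its martingale analogue with $\tilde\sigma^n$ in place of $\tilde b^n$; (ii) $\sum_u\int_{\kappa(n,t)}^t\tfrac{\partial b^{(i)}(x^n_{\kappa(n,s)})}{\partial x^{(u)}}(b_1^{n,(u)}+b_2^{n,(u)})\,ds$ and $\sum_{u,j}\int_{\kappa(n,t)}^t\tfrac{\partial b^{(i)}(x^n_{\kappa(n,s)})}{\partial x^{(u)}}\sigma_M^{n,(u,j)}\,dw^j_s$; (iii) $\sum_{u,l,j}\int_{\kappa(n,t)}^t\bigl(\tfrac{\partial^2 b^{(i)}(x^n_s)}{\partial x^{(u)}\partial x^{(l)}}-\tfrac{\partial^2 b^{(i)}(x^n_{\kappa(n,s)})}{\partial x^{(u)}\partial x^{(l)}}\bigr)\tilde\sigma^{n,(u,j)}\tilde\sigma^{n,(l,j)}\,ds$; and (iv) the frozen-Hessian term with $\tilde\sigma^n\tilde\sigma^n-\sigma^n\sigma^n=\sigma^n\sigma_M^n+\sigma_M^n\sigma^n+\sigma_M^n\sigma_M^n$. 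For the drift-type remainders I would apply the Cauchy--Schwarz inequality in time (gaining a factor $t-\kappa(n,t)\le n^{-1}$), and for the martingale-type ones the It\^o isometry; then \ref{a4}, Remark~\ref{remark1}, H\"older's inequality, and Lemmas~\ref{lemmalp0},~\ref{lemma6},~\ref{lemma7} and Corollary~\ref{corollary2}. A derivative difference contributes a factor $|x^n_s-x^n_{\kappa(n,s)}|$, hence $n^{-1/2}$ in $\mathcal{L}^2$ by Lemma~\ref{lemma7}; a factor $b_1^n$ or $\sigma_2^n,\sigma_3^n$ contributes $n^{-1}$ and a factor $b_2^n$ or $\sigma_1^n$ contributes $n^{-1/2}$ by Lemma~\ref{lemma6}. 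Counting these powers together with the $n^{-1}$ gained from time integration or the It\^o isometry, every remainder in (i), (iii), (iv) is $O(n^{-3})$ or smaller, while the worst remainders, namely the $\tilde\sigma^n$-martingale term in (i) and the $\sigma_1^n$-part of (ii), are $O(n^{-2})$. Summing over $i$ and taking the supremum over $t\in[0,T]$ then gives the claim. The only genuine difficulty is the bookkeeping: one must choose the H\"older exponents in each estimate so that all the moments invoked stay below $p_0/(2\rho+1)$ (for $\tilde b^n,\tilde\sigma^n,b^n_k,\sigma^n_k$) or below $p_0$ (for the polynomial factors coming from Remark~\ref{remark1}), and verify that $p_0\ge 2(5\rho+1)$ leaves enough room; this is routine but tedious.
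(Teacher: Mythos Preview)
Your proposal is correct and follows essentially the same route as the paper's proof: It\^o's formula, identification of the main terms with $b_1^n+b_2^n$ up to an $O(n^{-5/2})$ taming discrepancy (the paper's $I_2+I_8-b_1^{n,(k)}$ and $I_5=b_2^{n,(k)}$), and remainder estimates via Remark~\ref{remark1}, H\"older, and Lemmas~\ref{lemmalp0}, \ref{lemma6}, \ref{lemma7}. The only difference is organizational---you freeze the outer argument first and then split $\tilde b^n,\tilde\sigma^n$, whereas the paper splits first and then freezes---and both identify the same two $O(n^{-2})$ bottleneck terms (the paper's $I_4$ and $I_6$, your martingale parts of (i) and (ii)).
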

\begin{proof}
For every \(k = 1, \dots, d\), applying It\^o's formula to \(b^{(k)}(x_t^n)-b^{(k)}(x_{\kappa(n,t)}^n)\) gives, almost surely,
\begin{align}\label{itob}
\begin{split}		
&b^{(k)}(x_t^n)-b^{(k)}(x_{\kappa(n,t)}^n)\\
&=\,	  \sum_{i = 1}^d \int_{\kappa(n,t)}^t \frac{\partial b^{(k)}(x_s^n)}{\partial x^{(i)}}\tilde{b}^{n,(i)}(s, x_{\kappa(n,s)}^n)\,ds+ \sum_{i = 1}^d \sum_{j = 1}^m \int_{\kappa(n,t)}^t \frac{\partial b^{(k)}(x_s^n)}{\partial x^{(i)}}\tilde{\sigma}^{n,(i,j)}(s, x_{\kappa(n,s)}^n)\,dw_s^j\\
																		&\hspace{1em}+ \frac{1}{2}\sum_{i,l = 1}^d \sum_{j = 1}^m\int_{\kappa(n,t)}^t \frac{\partial^2 b^{(k)}(x_s^n)}{\partial x^{(i)} \partial x^{(l)}}\tilde{\sigma}^{n,(i,j)}(s, x_{\kappa(n,s)}^n)\tilde{\sigma}^{n,(l,j)}(s, x_{\kappa(n,s)}^n)\,ds\\
																		&=\, \sum_{i=1}^9 I_i(t),	
\end{split}
\end{align}	
where
\begin{align*}													
I_1(t)& =  \sum_{i = 1}^d \int_{\kappa(n,t)}^t\left( \frac{\partial b^{(k)}(x_s^n)}{\partial x^{(i)}}-\frac{\partial b^{(k)}(x_{\kappa(n,s)}^n)}{\partial x^{(i)}}\right)b^{n,(i)}( x_{\kappa(n,s)}^n)\,ds,\\
I_2(t)& = \sum_{i = 1}^d \int_{\kappa(n,t)}^t \frac{\partial b^{(k)}(x_{\kappa(n,s)}^n)}{\partial x^{(i)}}b^{n,(i)}( x_{\kappa(n,s)}^n)\,ds,\\
I_3(t)& =  \sum_{i = 1}^d \int_{\kappa(n,t)}^t \frac{\partial b^{(k)}(x_s^n)}{\partial x^{(i)}}(b_1^{n,(i)}(s, x_{\kappa(n,s)}^n)+b_2^{n,(i)}(s, x_{\kappa(n,s)}^n))\,ds,\\
I_4(t)& =  \sum_{i = 1}^d \sum_{j = 1}^m \int_{\kappa(n,t)}^t \left( \frac{\partial b^{(k)}(x_s^n)}{\partial x^{(i)}}-\frac{\partial b^{(k)}(x_{\kappa(n,s)}^n)}{\partial x^{(i)}} \right) \sigma^{n,(i,j)}(x_{\kappa(n,s)}^n)\,dw_s^j,\\
I_5(t)& = \sum_{i = 1}^d \sum_{j = 1}^m \int_{\kappa(n,t)}^t\frac{\partial b^{(k)}(x_{\kappa(n,s)}^n)}{\partial x^{(i)}}\sigma^{n,(i,j)}(x_{\kappa(n,s)}^n)\,dw_s^j,\\			
I_6(t)& = \sum_{i = 1}^d \sum_{j = 1}^m \int_{\kappa(n,t)}^t\frac{\partial b^{(k)}(x_s^n)}{\partial x^{(i)}}\sigma_M^{n,(i,j)}(s,x_{\kappa(n,s)}^n)\,dw_s^j,\\
I_7(t)& = \frac{1}{2}\sum_{i,l = 1}^d \sum_{j = 1}^m\int_{\kappa(n,t)}^t \left(\frac{\partial^2 b^{(k)}(x_s^n)}{\partial x^{(i)} \partial x^{(l)}}-\frac{\partial^2 b^{(k)}(x_{\kappa(n,s)}^n)}{\partial x^{(i)} \partial x^{(l)}}\right)\sigma^{n,(i,j)}( x_{\kappa(n,s)}^n)\sigma^{n,(l,j)}( x_{\kappa(n,s)}^n)\,ds,\\	
I_8(t)& = \frac{1}{2}\sum_{i,l = 1}^d \sum_{j = 1}^m\int_{\kappa(n,t)}^t \frac{\partial^2 b^{(k)}(x_{\kappa(n,s)}^n)}{\partial x^{(i)} \partial x^{(l)}}\sigma^{n,(i,j)}( x_{\kappa(n,s)}^n)\sigma^{n,(l,j)}( x_{\kappa(n,s)}^n)\,ds,\\		
I_9(t)& = \frac{1}{2}\sum_{i,l = 1}^d \sum_{j = 1}^m\int_{\kappa(n,t)}^t \frac{\partial^2 b^{(k)}(x_s^n)}{\partial x^{(i)} \partial x^{(l)}}(\sigma^{n,(i,j)}( x_{\kappa(n,s)}^n)\sigma_M^{n,(l,j)}(s, x_{\kappa(n,s)}^n)\\
																			&\hspace{15em} +\sigma_M^{n,(i,j)}(s, x_{\kappa(n,s)}^n)\tilde{\sigma}^{n,(l,j)}(s, x_{\kappa(n,s)}^n))\,ds.
\end{align*}	
Note that 															
\begin{align*}
&\mathbb{E}|I_2(t)+I_8(t)- b_1^{n,(k)}(t, x_{\kappa(n,t)}^n)|^2\\
& \leq C\sum_{i,l = 1}^d \sum_{j = 1}^m\mathbb{E}\left|-\frac{n^{-3/2}|x_{\kappa(n,t)}^n|^{3\rho}}{(1+n^{-3/2}|x_{\kappa(n,t)}^n|^{3\rho})^2}
\int_{\kappa(n,t)}^t \frac{\partial^2 b^{(k)}(x_{\kappa(n,s)}^n)}{\partial x^{(i)} \partial x^{(l)}}\sigma^{(i,j)}( x_{\kappa(n,s)}^n)\sigma^{(l,j)}( x_{\kappa(n,s)}^n)\,ds\right|^2,
\end{align*}
which by applying Remark \ref{remark1} and Lemma \ref{lemmalp0} yields
\begin{align}\label{lemma9eq1}
\mathbb{E}|I_2(t)+I_8(t)- b_1^{n,(k)}(t, x_{\kappa(n,t)}^n)|^2&  \leq 	Cn^{-5}\mathbb{E}||x_{\kappa(n,t)}^n|^{3\rho}(1+|x_{\kappa(n,t)}^n|^{2\rho+1})|^2 \leq Cn^{-5},
\end{align}
for \(p_0 \geq 10\rho +2\). Moreover, notice that
\begin{equation}\label{lemma9eq2}
I_5(t)=b_2^{n,(k)}(t, x_{\kappa(n,t)}^n).
\end{equation}
Then, one obtains the following
\begin{align*}
&\mathbb{E}|b^{(k)}(x_t^n)-b^{(k)}(x_{\kappa(n,t)}^n)-b_1^{n,(k)}(t,x_{\kappa(n,t)}^n)-b_2^{n,(k)}(t,x_{\kappa(n,t)}^n)|^2\\
& \hspace{1em}\leq 2\mathbb{E}|I_1(t)+I_3(t)+I_4(t)+I_6(t)+I_7(t)+I_9(t)|^2+2\mathbb{E}|I_2(t)+I_8(t)-b_1^{n,(k)}(t, x_{\kappa(n,t)}^n)|^2\\
& \hspace{1em}\leq C(\mathbb{E}|I_1(t)|^2+\mathbb{E}|I_3(t)|^2+\mathbb{E}|I_4(t)|^2+\mathbb{E}|I_6(t)|^2+\mathbb{E}|I_7(t)|^2+\mathbb{E}|I_9(t)|^2)+Cn^{-5},
\end{align*}
for any \(t \in [0,T]\). To estimate \(\mathbb{E}|I_1(t)|^2\), applying Cauchy-Schwarz inequality and Remark \ref{remark1} yield
\begin{align*}	
\mathbb{E}|I_1(t)|^2\leq Cn^{-1}\int_{\kappa(n,t)}^t\mathbb{E}(1+|x_s^n|+|x_{\kappa(n,s)}^n|)^{2\rho-2}(1+|x_{\kappa(n,s)}^n|)^{2\rho+2}|x_s^n-x_{\kappa(n,s)}^n|^2\,ds,
\end{align*}
which further implies due to Young's inequality and H\"{o}lder's inequality
\begin{align*}	
\mathbb{E}|I_1(t)|^2 &\leq Cn^{-1}\int_{\kappa(n,t)}^t\left(\mathbb{E}(1+|x_s^n|^{p_0}+|x_{\kappa(n,s)}^n|^{p_0})\right)^{\frac{4\rho}{p_0}}\left(\mathbb{E}|x_s^n-x_{\kappa(n,s)}^n|^{\frac{2p_0}{p_0-4\rho}}\right)^{\frac{p_0-4\rho}{p_0}}\,ds,
\end{align*}
 for any \(t \in [0,T]\). By Lemma \ref{lemma7}, one obtains
 \[
 \mathbb{E}|I_1(t)|^2 \leq Cn^{-3},
 \]
 for any \(n \in \mathbb{N}\). As for $\mathbb{E}|I_3(t)|^2$, applying Cauchy-Schwarz inequality and Remark \ref{remark1} give
\begin{align*}	
\mathbb{E}|I_3(t)|^2 & \leq Cn^{-1}\int_{\kappa(n,t)}^t\mathbb{E}(1+|x_s^n|)^{2\rho}(|b_1^{n}(s, x_{\kappa(n,s)}^n)|^2+|b_2^{n}(s, x_{\kappa(n,s)}^n)|^2)\,ds,
\end{align*}
then one writes by using H\"{o}lder's inequality that
\begin{align*}	
\mathbb{E}|I_3(t)|^2 &\leq Cn^{-1}\int_{\kappa(n,t)}^t\left(\mathbb{E}(1+|x_s^n|^{p_0})\right)^{\frac{2\rho}{p_0}}(\mathbb{E}|b_1^{n}(s, x_{\kappa(n,s)}^n)|^{\frac{2p_0}{p_0-2\rho}}+\mathbb{E}|b_2^{n}(s, x_{\kappa(n,s)}^n)|^{\frac{2p_0}{p_0-2\rho}})^{\frac{p_0-2\rho}{p_0}}\,ds,
\end{align*}
for any \(t \in [0,T]\). Applying Lemma \ref{lemma6} yields
 \[
 \mathbb{E}|I_3(t)|^2 \leq Cn^{-3},
 \]
 for any \(n \in \mathbb{N}\). To estimate $\mathbb{E}|I_4(t)|^2$, one uses Cauchy-Schwarz inequality, Remark \ref{remark1} and Young's inequality to obtain
\begin{align*}	
\mathbb{E}|I_4(t)|^2 & \leq C\int_{\kappa(n,t)}^t\mathbb{E}(1+|x_s^n|+|x_{\kappa(n,s)}^n|)^{2\rho-2}(1+|x_{\kappa(n,s)}^n|)^{\rho+2}|x_s^n-x_{\kappa(n,s)}^n|^2\,ds\\
				& \leq C\int_{\kappa(n,t)}^t\mathbb{E}(1+|x_s^n|+|x_{\kappa(n,s)}^n|)^{3\rho}|x_s^n-x_{\kappa(n,s)}^n|^2\,ds,
\end{align*}
which implies due to Young's inequality and H\"{o}lder's inequality
\begin{align*}	
\mathbb{E}|I_4(t)|^2 &\leq C\int_{\kappa(n,t)}^t\left(\mathbb{E}(1+|x_s^n|^{p_0}+|x_{\kappa(n,s)}^n|^{p_0})\right)^{\frac{3\rho}{p_0}}\left(\mathbb{E}|x_s^n-x_{\kappa(n,s)}^n|^{\frac{2p_0}{p_0-3\rho}}\right)^{\frac{p_0-3\rho}{p_0}}\,ds,
\end{align*}
 for any \(t \in [0,T]\). One applies Lemma \ref{lemma7} to obtain
 \begin{equation}\label{i4}
 \mathbb{E}|I_4(t)|^2 \leq Cn^{-2},
 \end{equation}
 for any \(n \in \mathbb{N}\). As for $\mathbb{E}|I_6(t)|^2$, it can be written as
\begin{align*}	
\mathbb{E}|I_6(t)|^2 & \leq C\int_{\kappa(n,t)}^t\mathbb{E}(1+|x_s^n|)^{2\rho}|\sigma_M^{n}(s,x_{\kappa(n,s)}^n)|^2\,ds,
\end{align*}
which by using H\"{o}lder's inequality yields
\begin{align*}	
\mathbb{E}|I_6(t)|^2 &\leq C\int_{\kappa(n,t)}^t\left(\mathbb{E}(1+|x_s^n|^{p_0})\right)^{\frac{2\rho}{p_0}}\left(\mathbb{E}|\sigma_M^{n}(s,x_{\kappa(n,s)}^n)|^{\frac{2p_0}{p_0-2\rho}}\right)^{\frac{p_0-2\rho}{p_0}}\,ds,
\end{align*}
 for any \(t \in [0,T]\). By using Lemma \ref{lemmalp0} and Lemma \ref{lemma6}, one obtains
 \begin{equation}\label{i6}
 \mathbb{E}|I_6(t)|^2 \leq Cn^{-2},
 \end{equation}
 for any \(n \in \mathbb{N}\). In order to estimate $\mathbb{E}|I_7(t)|^2$, one uses Cauchy-Schwarz inequality and Remark \ref{remark1} to obtain
\begin{align*}	
\mathbb{E}|I_7(t)|^2 & \leq Cn^{-1}\int_{\kappa(n,t)}^t\mathbb{E}(1+|x_s^n|+|x_{\kappa(n,s)}^n|)^{2\rho-4}(1+|x_{\kappa(n,s)}^n|)^{2\rho+4}|x_s^n-x_{\kappa(n,s)}^n|^2\,ds\\
				& \leq Cn^{-1}\int_{\kappa(n,t)}^t\mathbb{E}(1+|x_s^n|+|x_{\kappa(n,s)}^n|)^{4\rho}|x_s^n-x_{\kappa(n,s)}^n|^2\,ds,
\end{align*}
which by applying Young's inequality and H\"{o}lder's inequality yields
\begin{align*}	
\mathbb{E}|I_7(t)|^2 &\leq Cn^{-1}\int_{\kappa(n,t)}^t\left(\mathbb{E}(1+|x_s^n|+|x_{\kappa(n,s)}^n|)^{p_0}\right)^{\frac{4\rho}{p_0}}\left(\mathbb{E}|x_s^n-x_{\kappa(n,s)}^n|^{\frac{2p_0}{p_0-4\rho}}\right)^{\frac{p_0-4\rho}{p_0}}\,ds,
\end{align*}
 for any \(t \in [0,T]\). Then applying Lemma \ref{lemmalp0} and Lemma \ref{lemma7}, one obtains
 \[
 \mathbb{E}|I_7(t)|^2 \leq Cn^{-3},
 \]
 for any \(n \in \mathbb{N}\). Finally for $\mathbb{E}|I_9(t)|^2 $, one writes
\begin{align*}	
\mathbb{E}|I_9(t)|^2 & \leq Cn^{-1}\int_{\kappa(n,t)}^t\mathbb{E}(1+|x_s^n|)^{2\rho-2}(1+|x_{\kappa(n,s)}^n|)^{\rho+2}|\sigma_M^{n}(s, x_{\kappa(n,s)}^n)|^2\,ds\\
				&\hspace{1em}+Cn^{-1}\int_{\kappa(n,t)}^t\mathbb{E}(1+|x_s^n|)^{2\rho-2}|\sigma_M^{n}(s, x_{\kappa(n,s)}^n)|^2|\tilde{\sigma}^{n}(s, x_{\kappa(n,s)}^n)|^2\,ds,
\end{align*}
which implies due to Young's inequality and H\"{o}lder's inequality
\begin{align*}	
\mathbb{E}|I_9(t)|^2 &\leq Cn^{-1}\int_{\kappa(n,t)}^t\left(\mathbb{E}(1+|x_s^n|+|x_{\kappa(n,s)}^n|)^{p_0}\right)^{\frac{3\rho}{p_0}}\left(\mathbb{E}|\sigma_M^{n}(s, x_{\kappa(n,s)}^n|^{\frac{2p_0}{p_0-3\rho}}\right)^{\frac{p_0-3\rho}{p_0}}\,ds\\
				&\hspace{1em}+Cn^{-1}\int_{\kappa(n,t)}^t\left(\mathbb{E}(1+|x_s^n|)^{p_0}\right)^{\frac{2\rho-2}{p_0}} \left(\mathbb{E}|\tilde{\sigma}^{n}(s, x_{\kappa(n,s)}^n)|^{\frac{2p_0}{p_0-2\rho+2}}|\sigma_M^{n}(s, x_{\kappa(n,s)}^n|^{\frac{2p_0}{p_0-2\rho+2}}\right)^{\frac{p_0-2\rho+2}{p_0}}\,ds
\end{align*}
 for any \(t \in [0,T]\). One can then apply Lemma \ref{lemmalp0} and Lemma \ref{lemma6} to the first term, and apply Cauchy-Schwarz inequality to the second term to obtain
 \begin{align*}	
\mathbb{E}|I_9(t)|^2 &\leq Cn^{-3}+Cn^{-1}\int_{\kappa(n,t)}^t\left(\mathbb{E}|\tilde{\sigma}^{n}(s, x_{\kappa(n,s)}^n)|^{\frac{4p_0}{p_0-2\rho+2}}\mathbb{E}|\sigma_M^{n}(s, x_{\kappa(n,s)}^n|^{\frac{4p_0}{p_0-2\rho+2}}\right)^{\frac{p_0-2\rho+2}{2p_0}}\,ds
\end{align*}
which, by using Lemma \ref{lemma6}, implies
 \[
 \mathbb{E}|I_9(t)|^2 \leq Cn^{-3},
 \]
 for any \(n \in \mathbb{N}\) and \(t \in [0,T]\). Therefore,
 \[
 \sup_{0\leq t \leq T}\mathbb{E}|b(x_t^n)-b(x_{\kappa(n,t)}^n)-b_1^n(t,x_{\kappa(n,t)}^n)-b_2^n(t,x_{\kappa(n,t)}^n)|^2 \leq Cn^{-2}+Cn^{-5} \leq Cn^{-2},
 \]
 for any \(n \in \mathbb{N}\), and the proof is complete.				
\end{proof}
Denote by \(e_t^n := x_t - x_t^n\) for any \(t \in [0,T]\), and define the stopping times as follows: for $R>0$,
\begin{equation}\label{stoppingtimedef}.
\tau_R:=\inf\{t \geq 0:|x_t| \geq R\}, \quad \tau_{n,R}': = \inf\{t \geq 0:|x_t^n| \geq R\}, \quad \nu_{n,R}: = \tau_R \wedge \tau_{n,R}'.
\end{equation}
\begin{lemma}\label{stoppingtime}
Assume \ref{a1} to \ref{a5} hold and $p_0 \geq 2(5\rho +1)$. Then, there exists a constant $C>0$ such that for any $s \in [0,T]$, the following inequality holds
\[
\mathbb{P}( s>\nu_{n,R} ) \leq CR^{-2},
\]
where $\nu_{n,R}$ is the stopping time defined in \eqref{stoppingtimedef}.
\end{lemma}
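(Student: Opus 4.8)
The plan is to pass from the single event $\{s>\nu_{n,R}\}$ to the two events governing the two stopping times and then to use a Chebyshev estimate for the \emph{stopped} processes. Since $\nu_{n,R}=\tau_R\wedge\tau_{n,R}'$, one has the identity $\{s>\nu_{n,R}\}=\{\tau_R<s\}\cup\{\tau_{n,R}'<s\}$, so by the union bound it suffices to bound $\mathbb{P}(\tau_R<s)$ and $\mathbb{P}(\tau_{n,R}'<s)$ separately. The elementary point is that, as $x$ and $x^n$ have continuous paths, on $\{\tau_R<s\}$ one has $\tau_R\wedge s=\tau_R$ and $|x_{\tau_R}|\geq R$, whence
\[
R^2\,\mathbb{P}(\tau_R<s)\leq\mathbb{E}\big[\mathbf{1}_{\{\tau_R<s\}}|x_{\tau_R\wedge s}|^2\big]\leq\mathbb{E}|x_{\tau_R\wedge s}|^2,
\]
and likewise $R^2\,\mathbb{P}(\tau_{n,R}'<s)\leq\mathbb{E}|x^n_{\tau_{n,R}'\wedge s}|^2$. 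The whole proof then reduces to showing that both $\mathbb{E}|x_{\tau_R\wedge s}|^2$ and $\mathbb{E}|x^n_{\tau_{n,R}'\wedge s}|^2$ are bounded by a constant independent of $n$, $R$ and $s\in[0,T]$.

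For the diffusion $x$, I would apply It\^o's formula to $|x_{t\wedge\tau_R}|^2$. On $[0,\tau_R]$ one has $|x_r|\leq R$, so $x_r\sigma(x_r)$ is bounded there and the stopped stochastic integral is a true martingale with vanishing expectation; hence
\[
\mathbb{E}|x_{\tau_R\wedge s}|^2=\mathbb{E}|x_0|^2+\mathbb{E}\int_0^{\tau_R\wedge s}\big(2x_rb(x_r)+|\sigma(x_r)|^2\big)\,dr .
\]
By \ref{a2} (and $p_0-1\geq1$) the integrand is at most $K(1+|x_r|^2)$; enlarging the integration domain to $[0,s]$ and using the uniform moment bound of Lemma \ref{classicallp0} gives $\mathbb{E}|x_{\tau_R\wedge s}|^2\leq\mathbb{E}|x_0|^2+K\int_0^T(1+\sup_{r\leq T}\mathbb{E}|x_r|^2)\,dr\leq C$.

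For the scheme I would repeat the manoeuvre with $|x^n_{t\wedge\tau_{n,R}'}|^2$. Again $|x^n_r|\leq R$ on $[0,\tau_{n,R}']$, and since $\mathbb{E}\int_0^s|\tilde\sigma^n(r,x^n_{\kappa(n,r)})|^2\,dr<\infty$ for fixed $n$ (e.g.\ by Corollary \ref{corollary1}), the stopped stochastic integral is a martingale, leaving
\[
\mathbb{E}|x^n_{\tau_{n,R}'\wedge s}|^2=\mathbb{E}|x_0|^2+\mathbb{E}\int_0^{\tau_{n,R}'\wedge s}\big(2x^n_r\tilde b^n(r,x^n_{\kappa(n,r)})+|\tilde\sigma^n(r,x^n_{\kappa(n,r)})|^2\big)\,dr .
\]
In contrast to the proof of Lemma \ref{lemmalp0}, no cancellations are needed here: I would simply bound $2x^n_r\tilde b^n\leq|x^n_r|^2+|\tilde b^n(r,x^n_{\kappa(n,r)})|^2$, extend the integral to $[0,s]$, and control the three resulting terms by $\mathbb{E}|x^n_r|^2\leq C$ (Lemma \ref{lemmalp0}, using $p_0\geq2$) together with $\mathbb{E}|\tilde b^n(r,x^n_{\kappa(n,r)})|^2\leq C$ and $\mathbb{E}|\tilde\sigma^n(r,x^n_{\kappa(n,r)})|^2\leq C$ (Corollary \ref{corollary2} with $p=2$, admissible because $p_0\geq2(5\rho+1)\geq4\rho+2$). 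This gives $\mathbb{E}|x^n_{\tau_{n,R}'\wedge s}|^2\leq\mathbb{E}|x_0|^2+CT\leq C$, uniformly in $n$, $R$ and $s$.

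Combining the two estimates yields $\mathbb{P}(s>\nu_{n,R})\leq R^{-2}\mathbb{E}|x_{\tau_R\wedge s}|^2+R^{-2}\mathbb{E}|x^n_{\tau_{n,R}'\wedge s}|^2\leq CR^{-2}$, as claimed. There is no genuinely hard step: the only things to watch are the a priori integrability needed to discard the stopped stochastic integrals and the care to route every constant through an $n$-uniform bound (Lemma \ref{lemmalp0}, Corollaries \ref{corollary1}--\ref{corollary2}), which is exactly why working with the stopped second moment is preferable to attempting a supremum-in-time estimate for $x^n$, whose coefficient bounds carry powers of $n$.
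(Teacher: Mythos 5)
Your proof is correct, but it takes a genuinely different route from the paper. The paper's argument is a two-line chain: union bound over the two hitting times, then Markov's inequality applied to $\sup_{u\leq s}|x_u|$ and $\sup_{u\leq s}|x_u^n|$, so that the whole burden falls on the bounds $\mathbb{E}\bigl(\sup_{u\leq s}|x_u|^2\bigr)\leq C$ and $\mathbb{E}\bigl(\sup_{u\leq s}|x_u^n|^2\bigr)\leq C$. Since Lemmas \ref{classicallp0} and \ref{lemmalp0} only give $\sup_t\mathbb{E}|x_t|^{p_0}\leq C$ and $\sup_t\mathbb{E}|x_t^n|^{p_0}\leq C$ (the weaker $\sup$-outside-the-expectation form), the paper has to invoke an external result — Lemma 5 of \cite{SabanisAoAP}, originating in \cite{GyongyKrylov} — to upgrade these to uniform bounds on the expected running supremum; this is exactly the step you identified as the reason to avoid a supremum-in-time estimate. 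Your alternative replaces the supremum by the stopped second moment: the path-continuity observation that $|x_{\tau_R\wedge s}|\geq R$ on $\{\tau_R<s\}$ turns Chebyshev into $R^2\,\mathbb{P}(\tau_R<s)\leq\mathbb{E}|x_{\tau_R\wedge s}|^2$, and the stopped moments are then controlled by It\^o's formula together with \ref{a2} (using $p_0-1\geq1$), Lemma \ref{classicallp0}, Lemma \ref{lemmalp0} and Corollary \ref{corollary2} with $p=2$ (admissible since $p_0\geq2(5\rho+1)\geq4\rho+2$), all of which are already in the paper. What each approach buys: the paper's is shorter on the page but outsources the nontrivial $\mathbb{E}\sup$ bound to a cited lemma; yours is slightly longer but entirely self-contained, needs only the one-point-in-time moment bounds, and all constants are visibly independent of $n$, $R$ and $s$. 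Both yield the stated $CR^{-2}$.
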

\begin{proof} By applying Markov inequality, one obtains
\begin{align*}
\mathbb{P}( s>\nu_{n,R} )& \leq \mathbb{P}\left( \sup_{u \leq s}|x_u|>R \right) + \mathbb{P}\left( \sup_{u \leq s}|x_u^n|>R \right)\\
								& \leq R^{-2}\mathbb{E}\left(\sup_{u\leq s}|x_u|^2\right)+R^{-2}\mathbb{E}\left(\sup_{u\leq s}|x_u^n|^2\right)\\
								&\leq CR^{-2}.
\end{align*}
Note that the last inequality holds since by Lemma \ref{classicallp0} and Lemma \ref{lemmalp0}, we have shown that the $p_0$-th moment of $x_t$ and $x_t^n$ are bounded uniformly in time, i.e. $\sup_{0\leq t \leq T}\mathbb{E}|x_t|^{p_0}\leq C$ and $\sup_{0\leq t\leq T}\mathbb{E}|x_t^n|^{p_0} \leq C$ for all $n \in \mathbb{N}$ and $p_0 \geq 4$. Then, one can obtain the uniform $\mathcal{L}^2$ bound  by using Lemma 5 in \cite{SabanisAoAP}, which originally appeared in \cite{GyongyKrylov}. 
\end{proof}
\begin{lemma}\label{lemma11}
Assume \ref{a1} to \ref{a5} hold and \(p_0 \geq 2(5\rho +1)\). Then, there exists a constant $C>0$, which is independent of $R$, such that for any \(n \in \mathbb{N}\) and \(t \in [0,T]\),
\begin{align*}
& \mathbb{E}\int_0^{t\wedge \nu_{n,R}} e_s^n(b(x_s^n)-b(x_{\kappa(n,s)}^n)-b_1^n(s,x_{\kappa(n,s)}^n)-b_2^n(s,x_{\kappa(n,s)}^n))\,ds\\
&\hspace{10em}\leq C\int_0^{t}\sup_{0\leq r\leq s}\mathbb{E}|e_{r \wedge \nu_{n,R}}^n|^2\,ds+Cn^{-\frac{5+\beta}{2}}+CR^{-\frac{2}{5}}n^{-2},
\end{align*}
where $\nu_{n,R}$ is the stopping time defined in \eqref{stoppingtimedef}.
\end{lemma}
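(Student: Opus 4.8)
Set $Y_s:=b(x_s^n)-b(x_{\kappa(n,s)}^n)-b_1^n(s,x_{\kappa(n,s)}^n)-b_2^n(s,x_{\kappa(n,s)}^n)$; component-wise this is precisely the remainder decomposed in the proof of Lemma \ref{lemma10}. The plan is to reuse that It\^o decomposition and to exploit the martingale structure of its ``first order'' pieces against the error $e^n$, invoking the localisation $\nu_{n,R}$ only to control the superlinear growth. On the domain of integration $s\le\nu_{n,R}$, so almost surely $e_s^n=e_{s\wedge\nu_{n,R}}^n$, $\int_0^{t\wedge\nu_{n,R}}(\cdot)\,ds=\int_0^t(\cdot)\,\mathbb 1_{\{s<\nu_{n,R}\}}\,ds$, and for $r\in[\kappa(n,s),s]$ on $\{s<\nu_{n,R}\}$ one also has $e_r^n=e_{r\wedge\nu_{n,R}}^n$ and $|x_r|,|x_r^n|<R$.

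\noindent By \eqref{itob}, \eqref{lemma9eq1} and \eqref{lemma9eq2}, inside the integral $Y_s^{(k)}=I_1(s)+I_3(s)+I_7(s)+I_9(s)+\big(I_2(s)+I_8(s)-b_1^{n,(k)}(s,\cdot)\big)+I_4(s)+I_6(s)$. For the first group the bounds obtained in the proof of Lemma \ref{lemma10} give $\mathbb E|I_i(s)|^2\le Cn^{-3}$ (and $Cn^{-5}$ for $I_2+I_8-b_1^n$); Cauchy--Schwarz followed by Young's inequality then yields $\mathbb E\int_0^t|e_{s\wedge\nu_{n,R}}^n|\,|I_i(s)|\,\mathbb 1_{\{s<\nu_{n,R}\}}\,ds\le C\int_0^t\sup_{0\le r\le s}\mathbb E|e_{r\wedge\nu_{n,R}}^n|^2\,ds+Cn^{-3}$, and $n^{-3}\le n^{-(5+\beta)/2}$ since $\beta\le1$. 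The remaining pieces $I_4(s)$, $I_6(s)$ are genuine It\^o integrals over $[\kappa(n,s),s]$ with only $\mathbb E|I_4(s)|^2\le Cn^{-2}$, $\mathbb E|I_6(s)|^2\le Cn^{-2}$ (cf.\ \eqref{i4}, \eqref{i6}), so a bare Cauchy--Schwarz bound produces $Cn^{-2}$, which is too coarse; they must be handled using that each has zero $\mathscr F_{\kappa(n,s)}$-conditional mean.

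\noindent For $I_4+I_6$ I would split $e_s^n=e_{\kappa(n,s)}^n+(e_s^n-e_{\kappa(n,s)}^n)$. For the first summand, $e_{\kappa(n,s)}^n\,\mathbb 1_{\{\kappa(n,s)<\nu_{n,R}\}}$ is $\mathscr F_{\kappa(n,s)}$-measurable and $\mathbb E[I_4(s)+I_6(s)\mid\mathscr F_{\kappa(n,s)}]=0$, so after the decomposition $\mathbb 1_{\{s<\nu_{n,R}\}}=\mathbb 1_{\{\kappa(n,s)<\nu_{n,R}\}}-\mathbb 1_{\{\kappa(n,s)<\nu_{n,R}\le s\}}$ the first part contributes nothing, while the ``boundary'' part is bounded by H\"older's inequality with the indicator raised to the fifth power, using $\mathbb P(\kappa(n,s)<\nu_{n,R}\le s)\le\mathbb P(s>\nu_{n,R})\le CR^{-2}$ from Lemma \ref{stoppingtime} together with Lemma \ref{lemmalp0} and the $\mathcal L^p$ bounds for $I_4,I_6$; this is the source of the term $CR^{-2/5}n^{-2}$. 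For the second summand I would expand $e_s^n-e_{\kappa(n,s)}^n$ via \eqref{scheme} and \eqref{sde} into a Lebesgue increment plus an It\^o increment over $[\kappa(n,s),s]$; pairing the It\^o increment with $I_4+I_6$ through the conditional It\^o isometry (again disposing of the stopping time by the same splitting) turns the product into $\int_{\kappa(n,s)}^s[\sigma(x_r)-\tilde\sigma^n(r,x_{\kappa(n,r)}^n)]\cdot(\text{integrand of }I_4+I_6)_r\,dr$, and I would decompose $\sigma(x_r)-\tilde\sigma^n$ as $[\sigma(x_r)-\sigma(x_r^n)]+[\sigma(x_r^n)-\sigma(x_{\kappa(n,r)}^n)-\sigma_M^n]+[\sigma(x_{\kappa(n,r)}^n)-\sigma^n(x_{\kappa(n,r)}^n)]$. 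The genuine-error term $\sigma(x_r)-\sigma(x_r^n)$, estimated by Remark \ref{remark1}, H\"older's inequality (admissible because $p_0\ge2(5\rho+1)$ and $\rho\ge2$), Lemma \ref{lemmalp0}, Lemma \ref{lemma7} and Young's inequality, gives the Gronwall term $C\int_0^t\sup_{r\le s}\mathbb E|e_{r\wedge\nu_{n,R}}^n|^2\,ds$ plus $Cn^{-3}$; the Lemma \ref{lemma9} term contributes $Cn^{-1}\cdot n^{-(2+\beta)/2}\cdot n^{-1/2}=Cn^{-(5+\beta)/2}$; the Lemma \ref{lemma8} term and the Lebesgue-increment part are of order $n^{-3}$. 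Collecting everything gives the assertion.

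\noindent The main obstacle is exactly the treatment of $I_4$ and $I_6$: one must extract smallness beyond the crude $n^{-2}$ from their martingale / It\^o-isometry structure, yet their integrands and the diffusion coefficient of $e^n$ grow superlinearly in $|x|$ and $|x^n|$, so every estimate has to be run either under the localisation $\nu_{n,R}$ or through moments of order $p_0$. The tension is that the martingale orthogonality requires $\mathscr F_{\kappa(n,s)}$-adaptedness whereas $\{s<\nu_{n,R}\}$ is not $\mathscr F_{\kappa(n,s)}$-measurable; this is what forces the indicator splitting and the fifth-power H\"older estimate of the boundary event, and hence the residual $CR^{-2/5}n^{-2}$. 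The accompanying H\"older-exponent and moment bookkeeping is routine but lengthy.
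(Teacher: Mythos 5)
Your proposal follows essentially the same route as the paper's proof: the same It\^o decomposition from Lemma \ref{lemma10}, crude Cauchy--Schwarz/Young bounds on the $n^{-3}$-order pieces, the splitting $e_s^n=e_{\kappa(n,s)}^n+(e_s^n-e_{\kappa(n,s)}^n)$ for the stochastic-integral terms $I_4+I_6$, the indicator decomposition with the fifth-power H\"older estimate and Lemma \ref{stoppingtime} producing $CR^{-2/5}n^{-2}$, and the three-way decomposition of $\sigma(x_r)-\tilde\sigma^n$ with Lemma \ref{lemma9} supplying the $Cn^{-(5+\beta)/2}$ term. The exponent bookkeeping you sketch matches the paper's, so the argument is correct as proposed.
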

\begin{proof}
First, for any $k = 1,\dots,d$, applying It\^o's formula to \(b^{(k)}(x_t^n)-b^{(k)}(x_{\kappa(n,t)}^n)\) gives \eqref{itob}. Then, by \eqref{lemma9eq1} and \eqref{lemma9eq2}, one obtains
\[
\mathbb{E}\int_0^{t\wedge \nu_{n,R}}e_s^{n,(k)}(b^{(k)}(x_s^n)-b^{(k)}(x_{\kappa(n,s)}^n)-b_1^{n,(k)}(s,x_{\kappa(n,s)}^n)-b_2^{n,(k)}(s,x_{\kappa(n,s)}^n))\,ds \leq \sum_{i=1}^7T_i(t) +T_8,
\]
where
\begin{align*}	
T_1(t)&= \mathbb{E}\int_0^{t\wedge \nu_{n,R}}e_s^{n,(k)}\sum_{i = 1}^d \int_{\kappa(n,s)}^s\left( \frac{\partial b^{(k)}(x_r^n)}{\partial x^{(i)}}-\frac{\partial b^{(k)}(x_{\kappa(n,r)}^n)}{\partial x^{(i)}}\right)b^{n,(i)}( x_{\kappa(n,r)}^n)\,dr\,ds,\\
T_2(t)&=\mathbb{E}\int_0^{t\wedge \nu_{n,R}}e_s^{n,(k)} \sum_{i = 1}^d \int_{\kappa(n,s)}^s \frac{\partial b^{(k)}(x_r^n)}{\partial x^{(i)}}(b_1^{n,(i)}(r, x_{\kappa(n,r)}^n)+b_2^{n,(i)}(r, x_{\kappa(n,r)}^n))\,dr\,ds,\\
T_3(t)&= \mathbb{E}\int_0^{t\wedge \nu_{n,R}}e_s^{n,(k)} \sum_{i = 1}^d \sum_{j = 1}^m \int_{\kappa(n,s)}^s \left( \frac{\partial b^{(k)}(x_r^n)}{\partial x^{(i)}}-\frac{\partial b^{(k)}(x_{\kappa(n,r)}^n)}{\partial x^{(i)}} \right) \sigma^{n,(i,j)}(x_{\kappa(n,r)}^n)\,dw_r^j\,ds,\\	
T_4(t)&= \mathbb{E}\int_0^{t\wedge \nu_{n,R}}e_s^{n,(k)}\sum_{i = 1}^d \sum_{j = 1}^m \int_{\kappa(n,s)}^s\frac{\partial b^{(k)}(x_r^n)}{\partial x^{(i)}}\sigma_M^{n,(i,j)}(r,x_{\kappa(n,r)}^n)\,dw_r^j	\,ds,\\
T_5(t)&=\frac{1}{2} \mathbb{E}\int_0^{t\wedge \nu_{n,R}}e_s^{n,(k)}\sum_{i,l = 1}^d \sum_{j = 1}^m\int_{\kappa(n,s)}^s \left(\frac{\partial^2 b^{(k)}(x_r^n)}{\partial x^{(i)} \partial x^{(l)}}-\frac{\partial^2 b^{(k)}(x_{\kappa(n,r)}^n)}{\partial x^{(i)} \partial x^{(l)}}\right)\\
																			&\hspace{15em} \times \sigma^{n,(i,j)}( x_{\kappa(n,r)}^n)\sigma^{n,(l,j)}( x_{\kappa(n,r)}^n)\,dr\,ds,\\		
T_6(t)&= \frac{1}{2}\mathbb{E}\int_0^{t\wedge \nu_{n,R}}e_s^{n,(k)}\sum_{i,l = 1}^d \sum_{j = 1}^m\int_{\kappa(n,s)}^s \frac{\partial^2 b^{(k)}(x_r^n)}{\partial x^{(i)} \partial x^{(l)}}(\sigma^{n,(i,j)}( x_{\kappa(n,r)}^n)\sigma_M^{n,(l,j)}(r, x_{\kappa(n,r)}^n)\\
																			&\hspace{15em} +\sigma_M^{n,(i,j)}(r, x_{\kappa(n,r)}^n)\tilde{\sigma}^{n,(l,j)}(r, x_{\kappa(n,r)}^n))\,dr\,ds,\\\
T_7(t)&= C\int_0^{t}\sup_{0\leq r\leq s}\mathbb{E}|e_{r\wedge \nu_{n,R}}^n|^2\,ds,\\
T_8(t)&= Cn^{-5}.
\end{align*}
To estimate $T_1(t)$, one applies Young's inequality and Remark \ref{remark1} to obtain
\begin{align*}	
T_1(t) & \leq C\int_0^{t}\sup_{0\leq r\leq s}\mathbb{E}|e_{r\wedge \nu_{n,R}}^n|^2\,ds\\
		&\hspace{1em}+ Cn^{-1}\int_0^{t}\int_{\kappa(n,s)}^s\mathbb{E}(1+|x_r^n|+|x_{\kappa(n,r)}^n|)^{4\rho}|x_r^n-x_{\kappa(n,r)}^n|^2\,dr\,ds,
\end{align*}
which by using H\"{o}lder's inequality implies
\begin{align*}	
T_1(t) &\leq C\int_0^{t}\sup_{0\leq r\leq s}\mathbb{E}|e_{r\wedge \nu_{n,R}}^n|^2\,ds\\
		&\hspace{1em}+ Cn^{-1}\int_0^{t}\int_{\kappa(n,s)}^s\left(\mathbb{E}(1+|x_r^n|^{p_0}+|x_{\kappa(n,r)}^n|^{p_0})\right)^{\frac{4\rho}{p_0}}\left(\mathbb{E}|x_r^n-x_{\kappa(n,r)}^n|^{\frac{2p_0}{p_0-4\rho}}\right)^{\frac{p_0-4\rho}{p_0}}\,dr\,ds.
\end{align*}
Thus, by Lemma \ref{lemma7} and Lemma \ref{lemmalp0}, one obtains
\[
T_1(t) 	\leq C\int_0^{t}\sup_{0\leq r\leq s}\mathbb{E}|e_{r\wedge \nu_{n,R}}^n|^2\,ds +Cn^{-3},
\]
for any \(n \in \mathbb{N}\). For $T_2(t)$, $T_5(t)$ and $T_6(t)$, the same results can be obtained by the direct application of Cauchy-Schwarz inequality combining with previous Lemmas and Remarks. The rest of the proof will mainly focus on obtaining estimates for $T_3(t)$ and $T_4(t)$. 
For any $r \in [0,T]$, $i,k = 1, \dots, d$ and $j = 1, \dots, m$, denote by
\[
\mathbb{T}_r^{(i,j,k)} := \left( \frac{\partial b^{(k)}(x_r^n)}{\partial x^{(i)}}-\frac{\partial b^{(k)}(x_{\kappa(n,r)}^n)}{\partial x^{(i)}} \right) \sigma^{n,(i,j)}(x_{\kappa(n,r)}^n)+\frac{\partial b^{(k)}(x_r^n)}{\partial x^{(i)}}\sigma_M^{n,(i,j)}(r,x_{\kappa(n,r)}^n).
\]
Then, applying Remark \ref{remark1} and H\"{o}lder's inequality yields
\begin{align*}	
\mathbb{E}|\mathbb{T}_r^{(i,j,k)}|^p 	&=\mathbb{E}\left|\left( \frac{\partial b^{(k)}(x_r^n)}{\partial x^{(i)}}-\frac{\partial b^{(k)}(x_{\kappa(n,r)}^n)}{\partial x^{(i)}} \right) \sigma^{n,(i,j)}(x_{\kappa(n,r)}^n)+\frac{\partial b^{(k)}(x_r^n)}{\partial x^{(i)}}\sigma_M^{n,(i,j)}(r,x_{\kappa(n,r)}^n)\right|^p\\
								&\leq C\left(\mathbb{E}(1+|x_r^n|+|x_{\kappa(n,r)}^n|)^{p_0}\right)^{\frac{3\rho p}{2p_0}}\left(\mathbb{E}|x_r^n-x_{\kappa(n,r)}^n|^{\frac{2pp_0}{2p_0-3\rho p}}\right)^{\frac{2p_0-3\rho p}{2p_0}}\\
								&\hspace{10em}+C\left(\mathbb{E}(1+|x_r^n|)^{p_0}\right)^{\frac{\rho p}{p_0}}(\mathbb{E}|\sigma_M^{n,(i,j)}(r,x_{\kappa(n,r)}^n)|^{\frac{pp_0}{p_0-\rho p}})^{\frac{p_0-\rho p}{p_0}},
\end{align*}
which, by using Lemma \ref{lemma7} and Lemma \ref{lemma6}, implies
\begin{equation}\label{t2}
\sup_{r \leq T}\mathbb{E}|\mathbb{T}_r^{(i,j,k)}|^p  \leq Cn^{-\frac{p}{2}},
\end{equation}
for $p \leq \frac{2p_0}{7\rho+2}$. Due to \eqref{i4} and \eqref{i6} in the proof of Lemma \ref{lemma10}, one can also obtain the following estimate
\begin{equation}\label{t1}	
\mathbb{E}\left|\sum_{i = 1}^d \sum_{j = 1}^m \int_{\kappa(n,s)}^s \mathbb{T}_r^{(i,j,k)} \,dw_r^j \right|^2 \leq 2\mathbb{E}|I_4(t)|^2+2\mathbb{E}|I_6(t)|^2\leq Cn^{-2}.
\end{equation}
Then, one writes
\begin{align*}	
T_3(t)+T_4(t): =\,& \mathbb{E}\int_0^{t\wedge \nu_{n,R}}e_s^{n,(k)} \sum_{i = 1}^d \sum_{j = 1}^m \int_{\kappa(n,s)}^s \mathbb{T}_r^{(i,j,k)} \,dw_r^j \,ds\\
				=\,& \mathbb{E}\int_0^{t\wedge \nu_{n,R}}(e_s^{n,(k)}-e_{\kappa(n,s)}^{n,(k)}) \sum_{i = 1}^d \sum_{j = 1}^m \int_{\kappa(n,s)}^s \mathbb{T}_r^{(i,j,k)} \,dw_r^j \,ds\\
					&+\mathbb{E}\int_0^{t\wedge \nu_{n,R}}e_{\kappa(n,s)}^{n,(k)} \sum_{i = 1}^d \sum_{j = 1}^m \int_{\kappa(n,s)}^s \mathbb{T}_r^{(i,j,k)} \,dw_r^j \,ds.
\end{align*}
Note that the second term above is not zero. However, by using Lemma \ref{stoppingtime}, one obtains
\begin{align*}
&\mathbb{E}\int_0^{t\wedge \nu_{n,R}}e_{\kappa(n,s)}^{n,(k)} \sum_{i = 1}^d \sum_{j = 1}^m \int_{\kappa(n,s)}^s \mathbb{T}_r^{(i,j,k)} \,dw_r^j \,ds\\	
	&\hspace{1em} =\mathbb{E}\int_0^t \mathbf{1}_{\{s \leq \nu_{n,R}\}}e_{\kappa(n,s)\wedge \nu_{n,R}}^{n,(k) } \sum_{i = 1}^d \sum_{j = 1}^m \int_{\kappa(n,s)}^s \mathbb{T}_r^{(i,j,k)} \,dw_r^j \,ds\\
	&\hspace{1em} =\mathbb{E}\int_0^t e_{\kappa(n,s)\wedge \nu_{n,R}}^{n,(k) } \sum_{i = 1}^d \sum_{j = 1}^m \int_{\kappa(n,s)}^s \mathbb{T}_r^{(i,j,k)} \,dw_r^j \,ds\\
	&\hspace{4em}-\mathbb{E}\int_0^t \mathbf{1}_{\{s > \nu_{n,R}\}}e_{\kappa(n,s)\wedge \nu_{n,R}}^{n,(k) } \sum_{i = 1}^d \sum_{j = 1}^m \int_{\kappa(n,s)}^s \mathbb{T}_r^{(i,j,k)} \,dw_r^j \,ds,
\end{align*}
where the first term is zero since $\kappa(n,s)\wedge \nu_{n,R}$ is $\mathcal{F}_{\kappa(n,s)}$-measurable. Then, applying Young's inequality, H\"older's inequality to the second term yield
\begin{align*}
&\mathbb{E}\int_0^{t\wedge \nu_{n,R}}e_{\kappa(n,s)}^{n,(k)} \sum_{i = 1}^d \sum_{j = 1}^m \int_{\kappa(n,s)}^s \mathbb{T}_r^{(i,j,k)} \,dw_r^j \,ds\\	
	&\hspace{1em} \leq C\int_0^t \sup_{0\leq r\leq s}\mathbb{E}|e_{r \wedge \nu_{n,R}}^n|^2\,ds+C\int_0^t\left(\mathbb{P}\left(s > \nu_{n,R}\right)\right)^{\frac{1}{5}}\left(\mathbb{E}\left| \sum_{i = 1}^d \sum_{j = 1}^m \int_{\kappa(n,s)}^s \mathbb{T}_r^{(i,j,k)} \,dw_r^j\right|^{\frac{5}{2}}\right)^{\frac{4}{5}}\,ds\\
	&\hspace{1em}\leq  C\int_0^t \sup_{0\leq r\leq s}\mathbb{E}|e_{r \wedge \nu_{n,R}}^n|^2\,ds+CR^{-\frac{2}{5}}n^{-2},
\end{align*}
where the last inequality holds due to \eqref{t2}. One may notice that \eqref{t2} holds only when $\frac{5}{2} \leq \frac{2p_0}{7\rho+2}$, which implies $p_0 \geq \frac{5}{4}(7\rho+2)$. However, as $\frac{5}{4}(7\rho+2) \leq 2(5\rho+1)$ for all $\rho \geq 2$, by assuming $p_0 \geq 2(5\rho+1)$, \eqref{t2} holds automatically for $p= \frac{5}{2}$. Furthermore, $T_3(t)+T_4(t)$ can be expressed as
\begin{align*}	
T_3(t)+T_4(t)	=\,& \mathbb{E}\int_0^{t\wedge \nu_{n,R}}\int_{\kappa(n,s)}^s\bar{b}^{n,(k)}(r, x_{\kappa(n,r)}^n)\,dr \sum_{i = 1}^d \sum_{j = 1}^m \int_{\kappa(n,s)}^s \mathbb{T}_r^{(i,j,k)} \,dw_r^j \,ds\\
					&+\mathbb{E}\int_0^{t\wedge \nu_{n,R}}\sum_{v=1}^m\int_{\kappa(n,s)}^s\bar{\sigma}^{n,(k,v)}(r, x_{\kappa(n,r)}^n)\,dw_r^v \sum_{i = 1}^d \sum_{j = 1}^m \int_{\kappa(n,s)}^s \mathbb{T}_r^{(i,j,k)} \,dw_r^j \,ds\\
					&+  C\int_0^t \sup_{0\leq r\leq s}\mathbb{E}|e_{r \wedge \nu_{n,R}}^n|^2\,ds+CR^{-\frac{2}{5}}n^{-2},
\end{align*}
where \(\bar{b}^{n,(k)}(t,x_{\kappa(n,t)}^n)= b^{(k)}(x_t)- \tilde{b}^{n,(k)}(t,x_{\kappa(n,t)}^n)\) and \(\bar{\sigma}^{n, (k,v)}(t,x_{\kappa(n,t)}^n)= \sigma^{(k,v)}(x_t)- \tilde{\sigma}^{n,(k,v)}(t,x_{\kappa(n,t)}^n)\). One observes that $T_3(t)+T_4(t)$ can be expanded as
\begin{align*}	
&T_3(t)+T_4(t)	\\
&\hspace{1em}= \mathbb{E}\int_0^{t\wedge \nu_{n,R}}\int_{\kappa(n,s)}^s(b^{(k)}(x_r)-b^{(k)}(x_r^n))\,dr \sum_{i = 1}^d \sum_{j = 1}^m \int_{\kappa(n,s)}^s \mathbb{T}_r^{(i,j,k)}\,dw_r^j \,ds\\
					&\hspace{1em}+ \mathbb{E}\int_0^{t\wedge \nu_{n,R}}\int_{\kappa(n,s)}^s(b^{(k)}(x_r^n)-b^{(k)}(x_{\kappa(n,r)}^n)-b_1^{n,(k)}(r, x_{\kappa(n,r)}^n)-b_2^{n,(k)}(r, x_{\kappa(n,r)}^n))\,dr \\
					& \hspace{10em} \times\sum_{i = 1}^d \sum_{j = 1}^m \int_{\kappa(n,s)}^s \mathbb{T}_r^{(i,j,k)} \,dw_r^j \,ds\\
					&\hspace{1em}+\mathbb{E}\int_0^{t\wedge \nu_{n,R}}\int_{\kappa(n,s)}^s(b^{(k)}(x_{\kappa(n,r)}^n)-b^{n,(k)}(x_{\kappa(n,r)}^n)\,dr \sum_{i = 1}^d \sum_{j = 1}^m \int_{\kappa(n,s)}^s \mathbb{T}_r^{(i,j,k)} \,dw_r^j \,ds\\
					&\hspace{1em}+\sum_{v=j=1}^m\sum_{i = 1}^d \mathbb{E}\int_0^{t\wedge \nu_{n,R}}\int_{\kappa(n,s)}^s(\sigma^{(k,v)}(x_r)-\sigma^{(k,v)}(x_r^n))\mathbb{T}_r^{(i,j,k)} \,dr \,ds\\
					&\hspace{1em}+ \sum_{v= j=1}^m\sum_{i = 1}^d\mathbb{E}\int_0^{t\wedge \nu_{n,R}}\int_{\kappa(n,s)}^s(\sigma^{(k,v)}(x_r^n)-\sigma^{(k,v)}(x_{\kappa(n,r)}^n)-\sigma_M^{n,(k,v)}(r, x_{\kappa(n,r)}^n))\mathbb{T}_r^{(i,j,k)} \,dr \,ds\\
					&\hspace{1em}+ \sum_{v= j=1}^m\sum_{i = 1}^d \mathbb{E}\int_0^{t\wedge \nu_{n,R}}\int_{\kappa(n,s)}^s(\sigma^{(k,v)}(x_{\kappa(n,r)}^n)-\sigma^{n,(k,v)}(x_{\kappa(n,r)}^n))\mathbb{T}_r^{(i,j,k)} \,dr \,ds\\
					&\hspace{1em}+  C\int_0^t \sup_{0\leq r\leq s}\mathbb{E}|e_{r \wedge \nu_{n,R}}^n|^2\,ds+CR^{-\frac{2}{5}}n^{-2},
\end{align*}
which implies due to Remark \ref{remark1}, Young's inequality and Cauchy-Schwarz inequality
\begin{align*}	
&T_3(t)+T_4(t)	\\
&\hspace{1em}\leq  C\mathbb{E}\int_0^{t\wedge \nu_{n,R}}\left(\int_{\kappa(n,s)}^s(1+|x_r|+|x_r^n|)^{2\rho}\,dr\int_{\kappa(n,s)}^s|e_r^n|^2\,dr\right)^{\frac{1}{2}}\left| \sum_{i = 1}^d \sum_{j = 1}^m \int_{\kappa(n,s)}^s \mathbb{T}_r^{(i,j,k)} \,dw_r^j\right| \,ds\\
					&\hspace{1em}+ C\int_0^{t}\left(\mathbb{E}\left|\int_{\kappa(n,s)}^s|b(x_r^n)-b(x_{\kappa(n,r)}^n)-b_1^n(r, x_{\kappa(n,r)}^n)-b_2^n(r, x_{\kappa(n,r)}^n)|\,dr\right|^2\right. \\
					& \hspace{14em} \times\left. \sum_{i = 1}^d \sum_{j = 1}^m\mathbb{E}\left|\int_{\kappa(n,s)}^s \mathbb{T}_r^{(i,j,k)} \,dw_r^j\right|^2\right)^{1/2} \,ds\\
					&\hspace{1em}+C\int_0^{t}n \times n^{-1}\mathbb{E}\int_{\kappa(n,s)}^s|b(x_{\kappa(n,r)}^n)-b^n(x_{\kappa(n,r)}^n|^2\,dr\,ds+Cn^{-1} \sum_{i = 1}^d \sum_{j = 1}^m\int_0^{t}\mathbb{E}\left| \int_{\kappa(n,s)}^s \mathbb{T}_r^{(i,j,k)} \,dw_r^j \,\right|^2ds\\
					&\hspace{1em}+C\sum_{i = 1}^d \sum_{j = 1}^m\mathbb{E}\int_0^{t\wedge \nu_{n,R}}\int_{\kappa(n,s)}^s(1+|x_r|+|x_r^n|)^{\frac{\rho}{2}}|e_r^n||\mathbb{T}_r^{(i,j,k)}|\,dr \,ds\\
					&\hspace{1em}+ C\sum_{i = 1}^d \sum_{j = 1}^m\int_0^{t}\int_{\kappa(n,s)}^s\sqrt{\mathbb{E}|\sigma(x_r^n)-\sigma(x_{\kappa(n,r)}^n)-\sigma_M^n(r, x_{\kappa(n,r)}^n)|^2\mathbb{E}|\mathbb{T}_r^{(i,j,k)}|^2} \,dr \,ds\\
					&\hspace{1em}+ C\sum_{i = 1}^d \sum_{j = 1}^m\int_0^{t}\int_{\kappa(n,s)}^s\sqrt{\mathbb{E}|\sigma(x_{\kappa(n,r)}^n)-\sigma^n(x_{\kappa(n,r)}^n)|^2\mathbb{E}|\mathbb{T}_r^{(i,j,k)}|^2} \,dr \,ds\\
					&\hspace{1em}+  C\int_0^t \sup_{0\leq r\leq s}\mathbb{E}|e_{r \wedge \nu_{n,R}}^n|^2\,ds+CR^{-\frac{2}{5}}n^{-2},
\end{align*}
for any \(t \in [0,T]\). Then, by \eqref{t1}, \eqref{t2}, Lemma \ref{lemma10}, Lemma \ref{lemma8}, Lemma \ref{lemma9}, H\"{o}lder's inequality and Cauchy-Schwarz inequality, one obtains
\begin{align*}	
T_3(t)+T_4(t)	\leq\, 	&Cn^{-1} \mathbb{E}\int_0^{t}\int_{\kappa(n,s)}^s(1+|x_r|+|x_r^n|)^{2\rho}\,dr\left| \sum_{i = 1}^d \sum_{j = 1}^m \int_{\kappa(n,s)}^s \mathbb{T}_r^{(i,j,k)}\,dw_r^j\right|^2 \,ds\\
						&+Cn^{-1}\sum_{i = 1}^d \sum_{j = 1}^m\int_0^{t}\int_{\kappa(n,s)}^s\left(\mathbb{E}(1+|x_r|+|x_r^n|)^{p_0}\right)^{\frac{\rho}{p_0}}(\mathbb{E}|\mathbb{T}_r^{(i,j,k)}|^{\frac{2p_0}{p_0-\rho}})^{\frac{p_0-\rho}{p_0}} \,dr \,ds\\
						&+C\mathbb{E}\int_0^{t}\sup_{0\leq r\leq s}\mathbb{E}|e_{r\wedge \nu_{n,R}}^n|^2\,ds+Cn^{-\frac{5+\beta}{2}}+ Cn^{-3}+CR^{-\frac{2}{5}}n^{-2},
\end{align*}
which, by applying H\"{o}lder's inequality, yields
\begin{align*}	
T_3(t)+T_4(t)	\leq \, 	&Cn^{-1} \sum_{i = 1}^d \sum_{j = 1}^m \int_0^{t}\left(n^{-\frac{p_0}{2\rho}+1}\mathbb{E}\int_{\kappa(n,s)}^s(1+|x_r|+|x_r^n|)^{p_0}\,dr\right)^{\frac{2\rho}{p_0}}\\
						& \hspace{5em} \times \left(n^{-\frac{p_0}{p_0-2\rho}+1}\mathbb{E}\int_{\kappa(n,s)}^s |\mathbb{T}_r^{(i,j,k)}|^{\frac{2p_0}{p_0-2\rho}} \,dr\right)^{\frac{p_0-2\rho}{p_0}}\,ds\\
						&+C\mathbb{E}\int_0^{t}\sup_{0\leq r\leq s}\mathbb{E}|e_{r\wedge \nu_{n,R}}^n|^2\,ds+Cn^{-\frac{5+\beta}{2}}+CR^{-\frac{2}{5}}n^{-2},
\end{align*}
for any \(t \in [0,T]\). 
Thus, by using Lemma \ref{lemmalp0} and \eqref{t2}, one obtains
\[
T_3(t)+T_4(t)  \leq C\int_0^{t}\sup_{0\leq r\leq s}\mathbb{E}|e_{r\wedge \nu_{n,R}}^n|^2\,ds+Cn^{-\frac{5+\beta}{2}}+CR^{-\frac{2}{5}}n^{-2},
\]
for any \(n \in \mathbb{N}\). Finally, notice that
\begin{align*}
&\mathbb{E}\int_0^{t\wedge \nu_{n,R}} e_s^n(b(x_s^n)-b(x_{\kappa(n,s)}^n)-b_1^n(s,x_{\kappa(n,s)}^n)-b_2^n(s,x_{\kappa(n,s)}^n))\,ds\\
&\hspace{1em} = \sum_{k=1}^d\mathbb{E}\int_0^{t\wedge \nu_{n,R}} e_s^{n,(k)}(b(x_s^{n,(k)})-b(x_{\kappa(n,s)}^{n,(k)})-b_1^n(s,x_{\kappa(n,s)}^{n,(k)})-b_2^{n,(k)}(s,x_{\kappa(n,s)}^n))\,ds\\
&\hspace{1em} \leq C\int_0^{t}\sup_{0\leq r\leq s}\mathbb{E}|e_{r\wedge \nu_{n,R}}^n|^2\,ds+Cn^{-\frac{5+\beta}{2}}+CR^{-\frac{2}{5}}n^{-2},
\end{align*} 
and the proof is complete. 
\end{proof}
\textbf{Proof of Theorem \ref{theorem1}.} Applying It\^o's formula to $|e_{t\wedge \nu_{n,R}}^n|^2$ gives, almost surely,
\begin{align*}	
|e_{t\wedge \nu_{n,R}}^n|^2 	& = 2\int_0^{t\wedge \nu_{n,R}}e_s^n\bar{b}^{n}(s, x_{\kappa(n,s)}^n)\,ds+2\int_0^{t\wedge \nu_{n,R}}e_s^n\bar{\sigma}^{n}(s, x_{\kappa(n,s)}^n)\,dw_s + \int_0^{t\wedge \nu_{n,R}} |\bar{\sigma}^{n}(s, x_{\kappa(n,s)}^n)|^2\,ds,
\end{align*}
where $\nu_{n,R}$ is the stopping time defined in \eqref{stoppingtimedef}, \(\bar{b}^n(t,x_{\kappa(n,t)}^n)= b(x_t)- \tilde{b}^n(t,x_{\kappa(n,t)}^n)\) and \(\bar{\sigma}^{n}(t,x_{\kappa(n,t)}^n)= \sigma(x_t)- \tilde{\sigma}^n(t,x_{\kappa(n,t)}^n)\). Taking expectations on both sides and using Young's inequality yield, for any $\varepsilon >0$,
\begin{align*}	
\mathbb{E}|e_{t\wedge \nu_{n,R}}^n|^2 	& \leq 2\mathbb{E}\int_0^{t\wedge \nu_{n,R}}e_s^n (b(x_s)-b(x_s^n))\,ds\\
													&\hspace{1em} + 2\mathbb{E}\int_0^{t\wedge \nu_{n,R}}e_s^n (b(x_s^n)-b(x_{\kappa(n,s)}^n)-b^n_1(s, x_{\kappa(n,s)}^n)-b^n_2(s, x_{\kappa(n,s)}^n))\,ds\\
													&\hspace{1em} + 2\mathbb{E}\int_0^{t\wedge \nu_{n,R}}e_s^n (b(x_{\kappa(n,s)}^n)-b^n(x_{\kappa(n,s)}^n))\,ds + (1+\varepsilon)\mathbb{E}\int_0^{t\wedge \nu_{n,R}} |\sigma (x_s)-\sigma(x_s^n)|^2\,ds\\
													& \hspace{1em} + C\mathbb{E}\int_0^{t\wedge \nu_{n,R}} |\sigma(x_s^n)-\sigma(x_{\kappa(n,s)}^n)-\sigma^n_M(s, x_{\kappa(n,s)}^n)|^2\,ds\\
													& \hspace{1em} + C\mathbb{E}\int_0^{t\wedge \nu_{n,R}} |\sigma(x_{\kappa(n,s)}^n)-\sigma^n(x_{\kappa(n,s)}^n)|^2\,ds.
\end{align*}
for any $t \in [0,T]$. Then, by Cauchy-Schwarz inequality, one obtains
\begin{align*}	
\mathbb{E}|e_{t\wedge \nu_{n,R}}^n|^2 	& \leq  \mathbb{E}\int_0^{t\wedge \nu_{n,R}}(2e_s^n (b(x_s)-b(x_s^n))+ (1+\varepsilon)|\sigma(x_s)-\sigma(x_s^n)|^2)\,ds\\
													&\hspace{1em} + 2\mathbb{E}\int_0^{t\wedge \nu_{n,R}}e_s^n (b(x_s^n)-b(x_{\kappa(n,s)}^n)-b^n_1(s, x_{\kappa(n,s)}^n)-b^n_2(s, x_{\kappa(n,s)}^n))\,ds\\
													&\hspace{1em} + \mathbb{E}\int_0^{t\wedge \nu_{n,R}}|b(x_{\kappa(n,s)}^n)-b^n(x_{\kappa(n,s)}^n)|^2\,ds\\
													& \hspace{1em} + C\mathbb{E}\int_0^{t\wedge \nu_{n,R}} |\sigma(x_s^n)-\sigma(x_{\kappa(n,s)}^n)-\sigma^n_M(s, x_{\kappa(n,s)}^n)|^2\,ds\\
													& \hspace{1em} +C \mathbb{E}\int_0^{t\wedge \nu_{n,R}} |\sigma(x_{\kappa(n,s)}^n)-\sigma^n(x_{\kappa(n,s)}^n)|^2\,ds+\int_0^{t}\sup_{0\leq r\leq s}\mathbb{E}|e_{r\wedge \nu_{n,R}}^n|^2\,ds.
\end{align*}
Since \(p_1>2\), applying \ref{a3} to the first term, and applying Lemma \ref{lemma11}, \ref{lemma8} and \ref{lemma9} yield
\begin{align*}	
\sup_{0 \leq s \leq t}\mathbb{E}|e_{s\wedge \nu_{n,R}}^n|^2 	\leq C\int_0^{t}\sup_{0\leq r\leq s}\mathbb{E}|e_{r\wedge \nu_{n,R}}^n|^2\,ds +Cn^{-(2+\beta)}+CR^{-\frac{2}{5}}n^{-2}<\infty,
\end{align*}
for any $t \in [0,T]$ and \(n \in \mathbb{N}\). Finally, one applies Gronwall's lemma to obtain
\[
\sup_{0 \leq s \leq t}\mathbb{E}|e_{s\wedge \nu_{n,R}}^n|^2 	\leq Cn^{-(2+\beta)}+CR^{-\frac{2}{5}}n^{-2},
\]
and the proof is complete by using Fatou's lemma, since the last term in the above inequality vanishes as $R$ tends to infinity.
\section{Simulation results}
In this section, simulation results are provided to support the theoretical results in the previous sections. Consider $T =1$, the step size $\Delta = t_{k+1} - t_k=1/N$ for $N \in \mathbb{N}$, $t_0=0$, and $k \in \{0,\dots,N-1\}$. For the case $d= m =1$, the discrete version of the order 1.5 scheme \eqref{scheme} is as follows: 
\begin{align*}
X_{k+1} & = X_k + b^n \Delta + \sigma^n \Delta W +L^{n,1}b \Delta Z+ \frac{1}{2} L^{n,0}b \Delta ^2 \\
& \hspace{1em}+\frac{1}{2}L^{n,1}\sigma((\Delta W)^2-\Delta) + L^{n,0}\sigma (\Delta W \Delta - \Delta Z)\\
& \hspace{1em}+\frac{1}{2}L^{n,1}L^1 \sigma\left(\frac{1}{3}(\Delta W)^2 - \Delta\right) \Delta W,
\end{align*}
where the following conventions are used: $X_k =X_{t_k}$, \(\Delta W = W_{t_{k+1}} - W_{t_k}\) and 
$\Delta Z = \int_{t_k}^{t_{k+1}}\int_{t_k}^{s}\,dW_r\,ds$. Note that $\Delta Z$ is normally distributed with mean zero, variance $\frac{1}{3}\Delta^3$, and covariance 
\[
\mathbb{E}(\Delta Z \Delta W) = \frac{1}{2}\Delta^2.
\]
Then, the following two examples are considered. For the first example, the one-dimensional SDE is given by
\begin{equation}\label{lipschitzexample}
dx_t = x_t(1-x_t^2)dt + \xi (1-x_t^2)dw_t, \quad \forall t \in [0,T],
\end{equation}
where $T \geq 0$ and $\xi \in [-0.3086,0.3086]$. As for the second example, one consider the SDE 
\begin{equation}\label{holderexample}
dx_t = x_t(1-|x_t|^3)dt + \xi|x_t|^\frac{5}{2}dw_t, \quad \forall t \in [0,T],
\end{equation}
where $T \geq 0$ and $\xi \in [-0.2209,0.2209]$. One can check (see Appendix) that the first example \eqref{lipschitzexample} satisfies the assumptions \ref{a1} to \ref{a5} with $\rho = 2$, whereas the second example \eqref{holderexample} satisfies the assumptions with $\rho = 4$.
\begin{figure}
\captionsetup{width=0.8\textwidth}
\centering
    \begin{subfigure}[t]{0.5\textwidth}
        \centering
        \includegraphics[scale=.55]{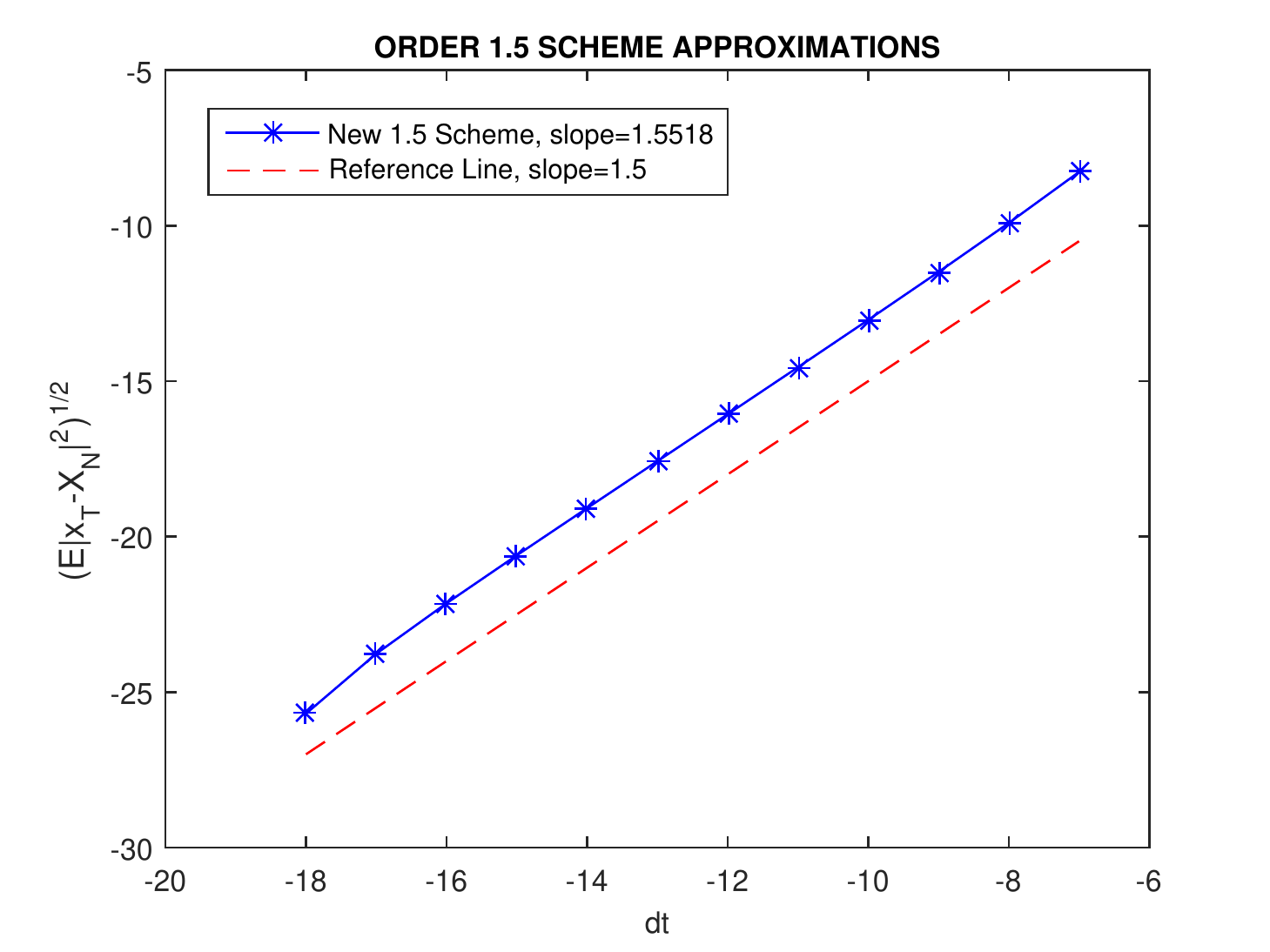}
        \caption{$\beta = 1$}
        \label{fig:beta1}
    \end{subfigure}%
    \begin{subfigure}[t]{0.5\textwidth}
        \centering
        \includegraphics[scale=.55]{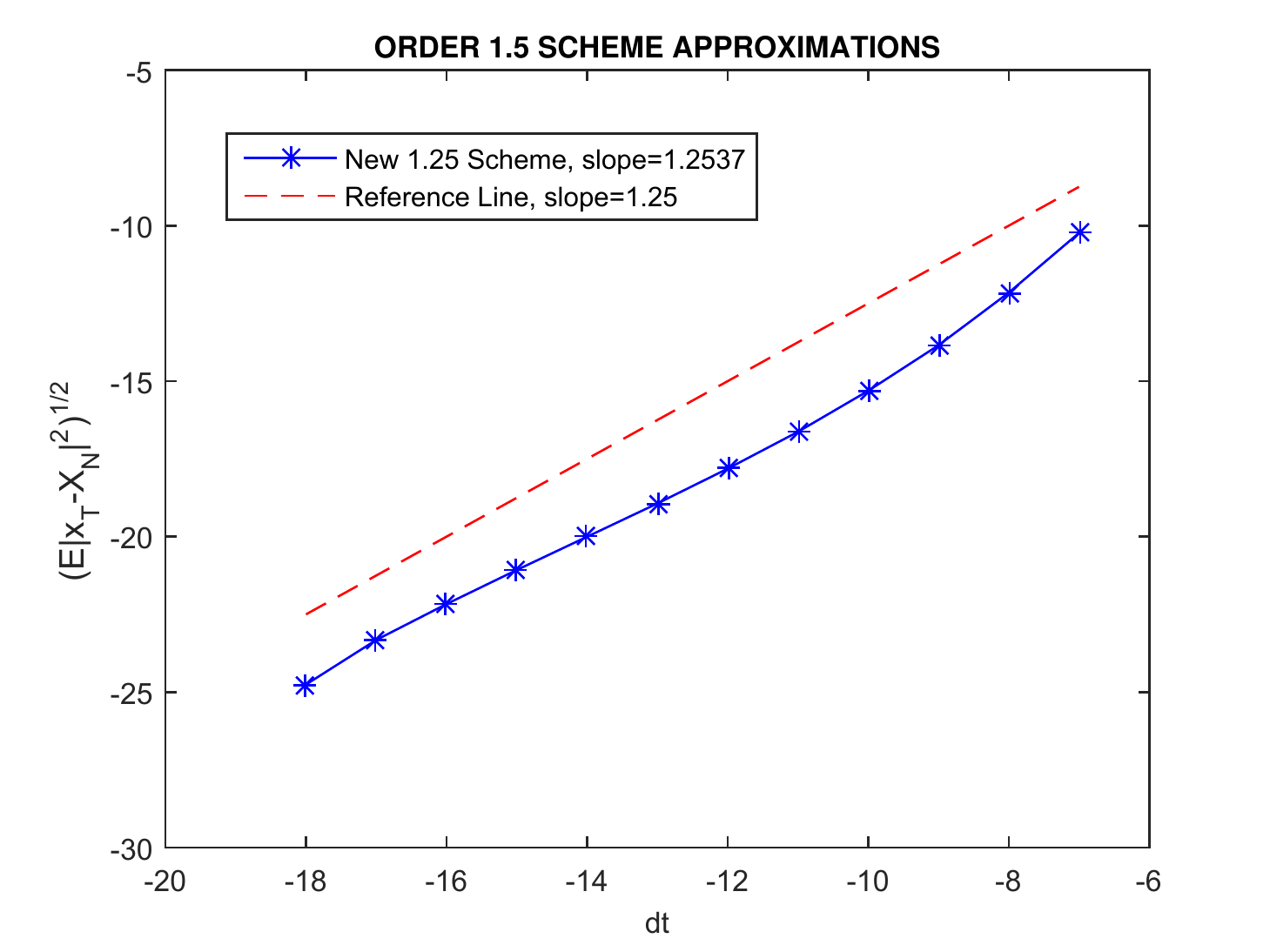}
        \caption{$\beta = 0.5$}
        \label{fig:beta0.5}
    \end{subfigure}%

    \caption{Rate of convergence of the new order 1.5 scheme with parameters $x_0 = 3$, $\xi = 0.02$ and $T=1$. Denote by $x_T$ and $X_N$ respectively the true solution and the numerical approximation of the corresponding SDE at time $T$. The dashed red lines are the reference lines, and the blue doted lines are the numerical results obtained using the scheme.}
\label{fig:rateofconvergence}
\end{figure}
As for the numerical results, Figure \ref{fig:rateofconvergence} above shows the rate of convergence of the scheme, and the approximations are obtained by simulating 1000 paths. Furthermore, Figure \ref{fig:beta1} illustrates that, for the case $\beta = 1$, the new explicit order 1.5 scheme has a rate of convergence estimate close to the theoretical result 1.5, which is 1.5518. Similarly, as shown in Figure \ref{fig:beta0.5},  the slope of the blue line is equal to 1.2537, which supports the theoretical prediction 1.25. Note that the examples considered in this section are one dimensional. However, in order to  implement such algorithms to real world problems where $d\geq 2$, the diffusion coefficient needs to satisfy the commutative condition. Otherwise, one needs to handle the associated Levy areas. One possible approach is to use a coupling technique (see \cite{coupling}).

\section{Appendix}
\begin{enumerate}
\item Consider the one-dimensional SDE
\[
dx_t = x_t(1-x_t^2)dt + \xi (1-x_t^2)dw_t, \quad \forall t \in [0,T].
\]
\begin{enumerate}
\item \ref{a1} is satisfied as $x_0$ is taken to be a constant (i.e. $x_0=3$).
\item To verify \ref{a2}, one calculates
\begin{align*}
2xb(x) +(p_0 -1)|\sigma(x)|^2 	&=2x^2-2x^4 +(p_0-1)\xi^2(1-x^2)^2\\
													&=(p_0-1)\xi^2+2(1-\xi^2(p_0-1))x^2+(\xi^2(p_0-1)-2)x^4.
\end{align*}
We require $\xi^2(p_0-1)-2\leq 0$, which implies $p_0 \leq \frac{2}{\xi^2}+1$.
\item As for \ref{a3}, one writes
\begin{align*}
&2(x-\bar{x})(b(x)-b(\bar{x})) +(p_1 -1)|\sigma(x)-\sigma (\bar{x})|^2 \\
&\hspace{1em}=2(x-\bar{x})((x-x^3)-(\bar{x}-\bar{x}^3))+(p_1 -1)\xi^2|(1-x^2)-(1-\bar{x}^2)|^2\\
&\hspace{1em}=2(x-\bar{x})^2-2(x-\bar{x})^2((x+\bar{x})^2-x\bar{x})+(p_1 -1)\xi^2|x+\bar{x}|^2|x-\bar{x}|^2\\
&\hspace{1em} \leq 2(x-\bar{x})^2+(x-\bar{x})^2\left((p_1 -1)\xi^2|x+\bar{x}|^2-(x+\bar{x})^2\right).
\end{align*}
Then, in order to guarantee $2(x-\bar{x})(b(x)-b(\bar{x})) +(p_1 -1)|\sigma(x)-\sigma (\bar{x})|^2 \leq K|x-\bar{x}|^2$ is satisifed for some $K>0$, we require $p_1 \in (2,\frac{1}{\xi^2}+1]$.
\item The second derivative of $b(x) = x(1-x^2)$ is $-6x$, then \ref{a4} is satisfied with $\rho \geq 2$ since
\[
\left|\frac{\partial^2 b(x)}{\partial x^2}-\frac{\partial^2 b(\bar{x})}{\partial \bar{x}^2}\right|\leq 6|x-\bar{x}|
\]
\item Similary, one can calculate the second derivative of $\sigma(x)= \xi (1-x^2)$, which is $-2\xi$. The assumption \ref{a5} is satisfied with $\rho \geq 2$.
\end{enumerate}
We choose $\rho$ to be 2, then, since it is assumed in Theorem 1 that $p_0 \geq 2(5\rho+1)=22$, one obtains $\xi \in [-0.3086,0.3086]$ by using $p_0 \in [22,\frac{2}{\xi^2}+1]$ and $p_1 \in (2,\frac{1}{\xi^2}+1]$.
\item As for the second example, consider the one-dimensional SDE
\[
dx_t = x_t(1-|x_t|^3)dt + \xi|x_t|^\frac{5}{2}dw_t, \quad \forall t \in [0,T].
\]
\begin{enumerate}
\item We take $x_0=3$, therefore \ref{a1} is satisfied.
\item As for \ref{a2}, one calculates
\begin{align*}
2xb(x) +(p_0 -1)|\sigma(x)|^2 	&=2x^2-2|x|^5 +(p_0-1)\xi^2|x|^5\\
													&=2x^2+((p_0-1)\xi^2-2)|x|^5.
\end{align*}
To guarantee \ref{a2} is satisfied, we require $p_0 \leq \frac{2}{\xi^2}+1$.
\item To verify \ref{a3}, one calculates the following
\begin{align*}
&2(x-\bar{x})(b(x)-b(\bar{x})) +(p_1 -1)|\sigma(x)-\sigma (\bar{x})|^2 \\
&\hspace{1em}=2(x-\bar{x})((x-x|x|^3)-(\bar{x}-\bar{x}|\bar{x}|^3))+(p_1 -1)\xi^2\left||x|^{\frac{5}{2}}-|\bar{x}|^{\frac{5}{2}}\right|^2\\
&\hspace{1em}=2(x-\bar{x})^2-2(|x|^5-x\bar{x}|x|^3-x\bar{x}|\bar{x}|^3+|\bar{x}|^5)+(p_1 -1)\xi^2\left||x|^{\frac{5}{2}}-|\bar{x}|^{\frac{5}{2}}\right|^2\\
&\hspace{1em}\leq 2(x-\bar{x})^2+\left(-2|x|^5-2|\bar{x}|^5+\frac{6}{5}|x|^5+\frac{6}{5}|\bar{x}|^5+\frac{8}{5}|x|^{\frac{5}{2}}|\bar{x}|^{\frac{5}{2}}\right)+(p_1 -1)\xi^2\left||x|^{\frac{5}{2}}-|\bar{x}|^{\frac{5}{2}}\right|^2\\
&\hspace{1em} = 2(x-\bar{x})^2+\left((p_1 -1)\xi^2-\frac{4}{5}\right)\left||x|^{\frac{5}{2}}-|\bar{x}|^{\frac{5}{2}}\right|^2.
\end{align*}
Therefore, we require $p_1 \in (2,\frac{4}{5\xi^2}+1]$ for \ref{a3} to be satisfied.
\item The second derivative of $b(x) = x(1-|x|^3)$ is $-12x|x|$, then \ref{a4} is satisfied with $\rho \geq 3$ since
\begin{align*}
\left|\frac{\partial^2 b(x)}{\partial x^2}-\frac{\partial^2 b(\bar{x})}{\partial \bar{x}^2}\right|
&\leq 12|\bar{x}|\bar{x}|-x|x||\\
&=12|\bar{x}|\bar{x}|-x|\bar{x}|+x|\bar{x}|-x|x||\\
&\leq 12|\bar{x}||\bar{x}-x|+|x||\bar{x}-x|\\
& \leq 12(|x|+|\bar{x}|)|\bar{x}-x|\\
& \leq 12(1+|x|+|\bar{x}|)|\bar{x}-x|.
\end{align*}
\item The second derivative of $\sigma(x) = \xi |x|^{\frac{5}{2}}$ is $\frac{15}{4}\xi|x|^{\frac{1}{2}}$, then one obtains
\begin{align*}
\left|\frac{\partial^2 \sigma(x)}{\partial x^2}-\frac{\partial^2 \sigma(\bar{x})}{\partial \bar{x}^2}\right| &\leq\frac{15}{4}|\xi|\left||x|^{\frac{1}{2}}-|\bar{x}|^{\frac{1}{2}}\right| \leq \frac{15}{4}|\xi||x-\bar{x}|^{\frac{1}{2}},
\end{align*}
which implies that \ref{a5} is satisfied with $\rho \geq 4$, and the last inequality holds since
\[
\left||x|^{\frac{1}{2}}-|\bar{x}|^{\frac{1}{2}}\right|^2 \leq \left||x|^{\frac{1}{2}}-|\bar{x}|^{\frac{1}{2}}\right|\left||x|^{\frac{1}{2}}+|\bar{x}|^{\frac{1}{2}}\right| \leq ||x|-|\bar{x}|| \leq |x-\bar{x}|.
\]
\end{enumerate}
We choose $\rho =4$, then, as it is assumed in Theorem 1 that $p_0 \geq 2(5\rho+1)=42$, one obtains $\xi \in [-0.2209,0.2209]$ by using $p_0 \in [42,\frac{2}{\xi^2}+1]$ and $p_1 \in (2,\frac{4}{5\xi^2}+1]$.
\end{enumerate}

\section*{Acknowledgements} 
 We  are grateful to anonymous referees for their helpful suggestions.

Ying Zhang was supported by The Maxwell Institute Graduate School in Analysis and its Applications, a Centre for Doctoral Training funded by the UK Engineering and Physical Sciences Research Council (grant EP/L016508/01), the Scottish Funding Council, Heriot-Watt University and the University of Edinburgh.


\end{document}